\newtheorem{assumption}{Assumption}
\begin{document}

\title{Stochastic Primal-Dual Q-Learning}

\author{\name Narim Jeong \email nrjeong@kaist.ac.kr\\
       \addr Department of Electrical Engineering\\
       Korea Advanced Institute of Science and Technology (KAIST)\\
       Daejeon, 34141, South Korea
       \AND
\name Donghwan Lee \email donghwan@kaist.ac.kr\\
       \addr Department of Electrical Engineering\\
       Korea Advanced Institute of Science and Technology (KAIST)\\
       Daejeon, 34141, South Korea
       \AND
       \name Niao He \email niao.he@inf.ethz.ch \\
       \addr Department of Computer Science\\
       ETH Zurich\\
       Zurich, 8092, Switzerland}

\editor{not determined}

\maketitle

\begin{abstract}
In this work, we present a new model-free and off-policy reinforcement learning (RL) algorithm, that is capable of finding a near-optimal policy with state-action observations from arbitrary behavior policies. Our algorithm,  called the stochastic primal-dual Q-learning (SPD Q-learning),
hinges upon a new linear programming formulation and a dual perspective of the standard Q-learning. In contrast to previous primal-dual RL algorithms, the SPD Q-learning includes a Q-function estimation step, thus allowing to recover an approximate policy from the primal solution as well as the dual solution. We prove a first-of-its-kind result that the SPD Q-learning guarantees a certain convergence rate, even when the state-action distribution is time-varying but sub-linearly converges to a stationary distribution. Numerical experiments are provided to demonstrate the off-policy learning abilities of the proposed algorithm in comparison to the standard Q-learning.
\end{abstract}

\begin{keywords}
 Reinforcement learning (RL), Saddle point problem, Markov decision process (MDP), Q-learning
\end{keywords}

\section{Introduction}

The problem of learning a map from world observations to actions, called a policy, lies at the core of many sequential decision problems, such as robotics~\citep{chen2017socially}, artificial
intelligence~\citep{mnih2015human}, finance~\citep{longstaff2001valuing}, and economics~\citep{tesauro2002pricing}. The development of policies is often very challenging in many
real-world applications as finding accurate world models is difficult under complex interactions between the decision maker and environment. Reinforcement learning (RL)~\citep{sutton1998reinforcement,bertsekas1996neuro,puterman2014markov} is a subfield of machine learning which addresses the problem of how an autonomous agent (decision maker) can learn an optimal policy to maximize long-term cumulative rewards, while interacting with unknown environment.

Many classical RL algorithms, e.g., temporal difference methods~\citep{sutton1988learning}, Q-learning~\citep{watkins1992q}, SARSA~\citep{rummery1994line}, are based on the sample-based stochastic dynamic programming to solve the Bellman equation, taking advantage of its contraction mapping property to guarantee their convergence. Comprehensive reviews of the dynamic programming and RL approaches can be found in the books~\citet{sutton1998reinforcement,bertsekas1996neuro,puterman2014markov}. Recently, there has been a growing interest in integrating Bellman equations into optimization frameworks to design provably efficient RL algorithms, by leveraging the existing fruitful optimization algorithms and theories. See, e.g.,~\citet{baird1995residual, sutton2009fast,mahadevan2014proximal,dai2017learning} for policy evaluation and \citet{wang2016online,chen2016stochastic,dai2018boosting,dai2018sbeed} for policy design. In particular, \citet{chen2016stochastic} considers a linear programming form of the Bellman equation and introduces a stochastic primal-dual (SPD) algorithm to solve the min-max problem of the associated Lagrangian function, assuming samples from a uniform state-action distribution. The primal-dual optimization perspective is further employed in~\citet{dai2018boosting} with nonlinear function approximations to solve Markov decision problems with continuous state-actions spaces. Besides the direct advantage of theoretical guarantees, such optimization frameworks are also very favorable and extensible when dealing with constraints, sparsity regularizations~\citep{mahadevan2012sparse}, and distributed scenarios~\citep{kar2013cal,macua2015distributed,zhang2018fully,lee2018primal}.

{\em Statement of Contributions:} Although substantial advances have been made recently in this direction, to the authors' knowledge, finding a reliable suboptimal policy using samples from real-world trajectories remains largely unexplored, leaving a huge gap between theory and practice. Inspired by the above discussions, this paper centers at filling in this gap by proposing a new linear programming (LP) formulation of the standard Q-learning~\citep{watkins1992q}, known to be one of the most popular RL algorithms for policy design, in order to leverage its powerful model-free and off-policy learning abilities to solve Markov decision making problems. The main contributions are summarized below.
\begin{enumerate}
\item We develop a novel stochastic primal-dual Q-learning (SPD Q-learning) algorithm to solve the corresponding Lagrangian of the LP, that uses only samples of real-world trajectories without any importance sampling steps, as usually required in off-policy RL algorithms~\citep{precup2001off}. The proposed algorithm includes a Q-function estimation step, and allows recovering an optimal policy using the primal solutions as well as the dual solutions.

\item  Moreover, the SPD Q-learning is the first RL which guarantees the convergence with a certain convergence rate even when the underlying distribution of the state-action observations is time-varying but sub-linearly converges to a stationary distribution. This result applies to important cases, such as  when the state distribution under a fixed behavior policy is time-varying, or when the behavior policy itself is time-varying.

\item We provide a detailed convergence and sample complexity analysis for the SPD Q-learning algorithm. In particular, we prove that with the number of iterations/samples at least ${\cal O}\left(\frac{|{\cal S}|^4 |{\cal A}|^4}{\zeta^4(1-\alpha)^4}\frac{1}{\varepsilon^2}\ln \left(\frac{1}{\delta}\right)\right)$, the algorithm generates a candidate solution with duality gap less than or equal to $\varepsilon$ with probability $1-\delta$, where $|{\cal S}|$ is the number of the states, $|{\cal A}|$ is the number of the actions, $\alpha \in (0,1)$ is the discount factor, and $\zeta \in (0,1)$ is a constant related to the state-action distribution. This result also leads to the conclusion that with the number of iterations at least ${\cal O}\left(\frac{|{\cal S}|^6 |{\cal A}|^4}{\zeta^4(1-\alpha)^6}\frac{1}{\varepsilon^2}\ln \left(\frac{1}{\delta}\right)\right)$, an $\varepsilon$-suboptimal policy can be recovered from the algorithm with probability at least $1-\delta$, where the policy is $\varepsilon$-suboptimal in the sense that the distance between the optimal value function and the value function corresponding to the obtained policy is less than or equal to $\varepsilon$.

\item To demonstrate the validity of the proposed algorithm, we provide simulation results for simple Markov decision making problems. Through the simulations, we observe that the suboptimal policy recovered from the primal solution converges faster than the suboptimal policy from the dual solution, and this is a potential advantage of the proposed algorithm.

\end{enumerate}
We expect that this fundamental framework will be useful to advance many subfields of RL, such as the distributed RL~\citep{kar2013cal,macua2015distributed,zhang2018fully,lee2018primal}, Q-learning with function approximations~\citep{sutton2009convergent,sutton2009fast}, sparsity promoted RL~\citep{mahadevan2012sparse}, safe RL~\citep{garcia2015comprehensive}, and the inverse RL~\citep{ng2000algorithms}.

The remainder of the paper is organized as follows. \cref{section:preliminaries} contains preliminaries including notations, definitions, problem formulations, standard LP formulation of the dynamic programming, and its solution analysis. \cref{section:SPD-Q-learning} proposes a new LP formulation of the dynamic programming tailored to the proposed Q-learning algorithm, its solution analysis, and the main SPD Q-learning algorithm. The corresponding convergence results of the SPD Q-learning algorithm are summarized in~\cref{section:main-result}, and detailed proofs are included in~\cref{section:convergence}. Simulation results are given in~\cref{section:simulations}, and finally, we provide conclusions in~\cref{section:conclusion}.

{\bf Notation:} The following notation is adopted: ${\mathbb R}^n $ denotes the
$n$-dimensional Euclidean space; ${\mathbb R}^{n \times m}$
denotes the set of all $n \times m$ real matrices; ${\mathbb R}_+^n$
and ${\mathbb R}_{++}^n$ denote the sets of vectors with nonnegative and positive
real elements, respectively, $A^T$ denotes the transpose of matrix
$A$; $I_n$ denotes the $n \times n$ identity matrix; $I$ denotes
the identity matrix with  appropriate dimension; $\|\cdot
\|_1,\|\cdot \|_2,\|\cdot \|_\infty$ denote the standard matrix
1-norm, Euclidean norm, and $\infty$-norm, respectively; $|{\cal
S}|$ denotes the cardinality of the set for any finite set ${\cal
S}$; ${\mathbb E}[\cdot]$ denotes the expectation operator;
${\mathbb P}[\cdot]$ denotes the probability of an event; $x(i)$
is the $i$-th element for any vector $x$; $P(i,j)$ indicates the
element in $i$-th row and $j$-th column for any matrix $P$; if
${\bf z}$ is a discrete random variable which has $n$ values and
$\mu \in {\mathbb R}^n$ is a stochastic vector, then ${\bf z} \sim
\mu$ stands for ${\mathbb P}[{\bf z} = i] = \mu(i)$ for all $i
\in \{1,\ldots,n \}$; ${\bf 1}_n \in {\mathbb R}^n$ denotes an
$n$-dimensional vector with all entries equal to one; for a convex
closed set $\cal S$, $\Pi_{\cal S}(x)$ is the projection of $x$
onto the set $\cal S$, i.e., $\Pi_{\cal S}(x):={\rm argmin}_{y\in {\cal
S}} \|x-y\|_2$; $\Delta_n$ with a positive integer $n$ is the unit
simplex defined as $\Delta_n:= \{(\alpha_1,\ldots,\alpha_n):\alpha_1 + \cdots + \alpha_n = 1,\,\alpha_i \ge 0,\forall i \in \{1,\ldots,n\}\}$; $e_j,j\in \{ 1,2,\ldots,n\}$, is
the $j$-th basis vector (all components are $0$ except for the
$j$-th component which is $1$) of appropriate dimensions.

\section{Preliminaries}\label{section:preliminaries}

\subsection{Problem formulation}
In this paper, we consider the infinite-horizon discounted Markov decision problem (MDP), where the agent tries to take actions to maximize
cumulative discounted rewards over infinite time horizons. In
particular, an instance of the discounted MDP can be represented by the
tuple $({\cal S},{\cal A},{\cal P},{\cal R},\alpha)$, where ${\cal
S}:=\{ 1,2,\ldots ,|{\cal S}|\}$ is a discrete state-space of size
$|{\cal S}|$, ${\cal A}:= \{1,2,\ldots,|{\cal A}|\}$ is a discrete
action-space of size $|{\cal A}|$, $\alpha \in [0,1)$ is the
discount factor, ${\cal P}$ defines a collection of state-to-state
transition probabilities, ${\cal P}: = \{P_a\in {\mathbb
R}^{|{\cal S}| \times |{\cal S}|},a \in {\cal A}\}$, where $P_a
(s,s')$ is the state transition probability from the current state
$s\in {\cal S}$ to the next state $s' \in {\cal S}$ under action
$a \in {\cal A}$, ${\cal R}: = \{ \hat r_{ss'a}  \in [0,\sigma ],a
\in {\cal A},s,s' \in {\cal S}\}$ is a collection of reward random
variables, where $\sigma>0$ is a real number and $\hat r_{ss'a}$
is the random reward when the current state, next state, and action is
$s,s',a$, respectively, with its expectation ${\mathbb E}[\hat
r_{ss'a}]= r_{ss'a}$. Without loss of generality and for simplicity, we assume $\sigma \geq 1$ throughout the paper. Let $\pi :{\cal S} \to {\cal A}$
be a deterministic policy that maps a state $s \in {\cal S}$ to an
action $\pi(s) \in {\cal A}$. With abuse of notation, the deterministic policy is interchangeably represented by the stochastic vector $\pi_s \in \Delta_{|{\cal A}|}$ such that $\pi_s = e_{\pi(s)} \in \Delta_{|{\cal A}|}$, where $e_i$ is the $i$-th basis vector in ${\mathbb R}^|{\cal A}|$. Hereafter, the dimension of $e_i$ is not specified if it is clear from the context. We denote the state-to-state
transition probability matrix under the deterministic policy $\pi$
by $P_\pi$, where $P_\pi (s,s'):=P_{\pi(s)} (s,s')$ for $s,s'
\in {\cal S}$. The infinite-horizon discounted cost under policy
$\pi$ is defined as
\begin{align}
&V^{\pi}(s) = {\mathbb E}\left[ \left. \sum_{k = 0}^\infty
{\alpha^k \hat r_{s_k s_{k + 1}\pi(s_k)}} \right|s_0 = s
\right],\quad s\in {\cal S},\label{eq:cost}
\end{align}
where $(s_0,s_1,\ldots)$ is a state trajectory generated by the
Markov chain under policy $\pi$. The discounted Markov decision
making problem is to find a deterministic optimal policy, $\pi^*: {\cal S} \to {\cal A}$,
such that the infinite-horizon discounted cost $V^{\pi}$ is
maximized, i.e.,
\begin{align*}
&\pi^* :={\rm argmax}_{\pi :{\cal S} \to {\cal A}} {\mathbb E}\left[
\sum_{k = 0}^\infty  {\alpha^k \hat r_{s_k s_{k+1} \pi (s_k )}}\right].
\end{align*}
Note that the optimal policy is always deterministic~\citep{puterman2014markov}. The main goal is to solve the decision problem by finding the optimal policy.

\subsection{LP formulation of dynamic programming}

In this subsection, we briefly review a linear programming (LP)
formulation of the dynamic programming problem
from~\citet{puterman2014markov,wang2016online,chen2016stochastic}.
Associated with~\eqref{eq:cost}, the optimal cost vector, $V^* \in
{\mathbb R}^{|{\cal S}|}$, is defined as
\begin{align*}
V^*(s)&:= V^{\pi^*}(s)={\mathbb E}\left[\left. \sum_{k=0}^\infty
{\alpha^k \hat r_{s_k s_{k+1} \pi^*(s_k)}} \right|s_0 = s \right]=\max_{\pi:{\cal S} \to {\cal A}} {\mathbb E}\left[\left. \sum_{k=0}^\infty {\alpha^k \hat r_{s_k s_{k + 1} \pi(s_k)}}
\right|s_0=s\right]
\end{align*}
for $s \in {\cal S}$. We will consider a general stochastic policy
denoted by $\theta_s \in \Delta_{|{\cal A}|},s \in {\cal S}$, where
$\theta_s(a),s \in {\cal S},a \in {\cal A}$, is the probability of
taking action $a \in {\cal A}$ when the current state is $s \in
{\cal S}$. The state-to-state transition probability matrix under
the stochastic policy $\theta$ is denoted by $P_\theta$, where
\begin{align}
&P_\theta=\sum_{a \in {\cal A}}{ \begin{bmatrix}
   \theta_1(a) & &  \\
    & \ddots  & \\
    & & \theta_{|{\cal S}|}(a)  \\
\end{bmatrix} P_a}.\label{eq:def:P-mu}
\end{align}
Note that if $\theta_s\in\Delta_{|A|},s \in {\cal S}$, is a standard
basis vector, then it is reduced to the deterministic case. In
addition, define the expected reward $R_a(s)$ conditioned on the
current action $a$ and state $s$, i.e., $R_a(s): = \sum_{s'\in {\cal S}} {P_a (s,s')r_{ss'a}}$, and the corresponding vectors
\begin{align*}
&R_a\in {\mathbb R}^{|{\cal S}|},\quad R:= \begin{bmatrix}
R_1\\
\vdots\\
R_{|A|}\\
\end{bmatrix}\in {\mathbb R}^{|{\cal S}||{\cal A}|}.
\end{align*}
Similarly, for any stochastic policy $\mu_s\in \Delta_{|{\cal
A}|},s \in {\cal S}$, $R_\mu(s),s \in {\cal S}$, is defined as
\begin{align}
&R_\mu(s): = \sum_{a\in {\cal A}} {\sum_{s'\in {\cal S}} {\mu_s(a)
P_a (s,s')r_{ss'a}} }= \sum_{a \in {\cal A}} {\mu_s(a)
R_a(s)}.\label{eq:def:R-mu}
\end{align}

It is well-known~\citep{bertsekas1996neuro,puterman2014markov,chen2016stochastic}
that the optimal cost vector, $V^*\in {\mathbb R}^{|{\cal S}|}$,
can be obtained by solving the linear programming problem (LP)
\begin{align*}
&\min_{V\in {\mathbb R}^{|{\cal S}|}} \quad \eta^T V\quad {\rm s.t.}\quad \alpha P_a V + R_a \le
V,\quad a \in {\cal A},
\end{align*}
where $\eta\in {\mathbb R}^{|{\cal S}|}$ is any vector with
positive elements and `$\le$' is the element-wise inequality. Introducing the notation
\begin{align*}
&P:= \begin{bmatrix}
P_1\\
\vdots\\
P_{|{\cal A}|}\\
\end{bmatrix} \in {\mathbb R}^{|{\cal S}||{\cal A}| \times |{\cal
S}|},
\end{align*}
the LP is compactly written by
\begin{align}
&p^*:=\min_{V\in {\mathbb R}^{|{\cal S}|}} \quad \eta^T V\quad {\rm s.t.}\quad R + \alpha PV \le
({\bf 1}_{|{\cal A}|}\otimes I_{|{\cal
S}|})V.\label{eq:DP-LP-form}
\end{align}
We will call this LP as the {\em primal problem}. Before the
development of the main results, some preliminary results are
introduced. First, the optimal solution of the
LP~\eqref{eq:DP-LP-form} is unique.
\begin{lemma}[{\citet[Theorem~1]{chen2016stochastic}}]\label{lemma:uniqueness-of-LP-DP}
The LP~\eqref{eq:DP-LP-form} has the unique solution~$V^*=
(I_{|{\cal S}|}-\alpha P_{\pi^*})^{-1} R_{\pi^*}$.
\end{lemma}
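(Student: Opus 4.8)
The plan is to exploit the well-known equivalence between the feasibility constraint of~\eqref{eq:DP-LP-form} and the Bellman optimality operator. First, I would unpack the block structure of the constraint $R+\alpha PV\le({\bf 1}_{|{\cal A}|}\otimes I_{|{\cal S}|})V$: reading off the $a$-th block gives $R_a+\alpha P_a V\le V$ for every $a\in{\cal A}$, which componentwise is the statement that $V(s)\ge\max_{a\in{\cal A}}\{R_a(s)+\alpha(P_a V)(s)\}$ for all $s\in{\cal S}$. Introducing the Bellman optimality operator $T:{\mathbb R}^{|{\cal S}|}\to{\mathbb R}^{|{\cal S}|}$ defined by $(TV)(s):=\max_{a\in{\cal A}}\{R_a(s)+\alpha(P_a V)(s)\}$, the feasible set of~\eqref{eq:DP-LP-form} is therefore exactly $\{V:V\ge TV\}$, the set of supersolutions of the Bellman equation.

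Next I would invoke two standard properties of $T$ from dynamic programming~\citep{puterman2014markov,bertsekas1996neuro}: $T$ is monotone (if $V\ge V'$ then $TV\ge TV'$), and $T$ is an $\alpha$-contraction in the $\infty$-norm whose unique fixed point is the optimal cost vector $V^*$, so that the value-iteration sequence $T^k V\to V^*$ for every $V$. These two facts drive the whole argument. To show $V^*$ is feasible, note that the Bellman optimality equation gives $V^*=TV^*$, hence in particular $V^*\ge TV^*$, so $V^*$ lies in the feasible set. The main work is the reverse inclusion, i.e., showing every feasible point dominates $V^*$: if $V\ge TV$, then monotonicity yields $TV\ge T(TV)=T^2V$, and iterating, $V\ge TV\ge T^2V\ge\cdots\ge T^kV$; passing to the limit via the contraction property gives $V\ge\lim_{k\to\infty}T^kV=V^*$. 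Thus the feasible set is precisely $\{V:V\ge V^*\}$.

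With the feasible region pinned down, uniqueness of the minimizer is immediate from the positivity of $\eta$. For any feasible $V$ we have $V\ge V^*$ componentwise, so $\eta^T V\ge\eta^T V^*$; moreover, since every entry of $\eta$ is strictly positive, equality forces $V=V^*$. Hence $V^*$ is the unique optimal solution and $p^*=\eta^T V^*$. Finally, to recover the closed form, I would use the policy-specific Bellman equation $V^*=R_{\pi^*}+\alpha P_{\pi^*}V^*$ satisfied by the optimal deterministic policy $\pi^*$; rearranging gives $(I_{|{\cal S}|}-\alpha P_{\pi^*})V^*=R_{\pi^*}$, and since $P_{\pi^*}$ is row-stochastic its spectral radius is $1$, so $\alpha P_{\pi^*}$ has spectral radius $\alpha<1$ and $I_{|{\cal S}|}-\alpha P_{\pi^*}$ is invertible, yielding $V^*=(I_{|{\cal S}|}-\alpha P_{\pi^*})^{-1}R_{\pi^*}$.

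I would expect the monotone-plus-contraction limiting argument in the second paragraph---establishing that every supersolution dominates $V^*$---to be the crux; the feasibility of $V^*$ and the uniqueness via positivity of $\eta$ are comparatively routine.
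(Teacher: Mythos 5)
Your proof is correct, and it is actually more than the paper itself offers: the paper states this lemma with no proof at all, deferring entirely to the citation \citet[Theorem~1]{chen2016stochastic}. Your self-contained argument is the standard dynamic-programming one that underlies that cited result: the feasible set of~\eqref{eq:DP-LP-form} consists of supersolutions of the Bellman optimality operator $T$; monotonicity plus the $\alpha$-contraction property force every supersolution to dominate the unique fixed point $V^*$; feasibility of $V^*$ together with strict positivity of $\eta$ then pins down $V^*$ as the unique minimizer; and the closed form follows from policy evaluation at $\pi^*$ and invertibility of $I_{|{\cal S}|}-\alpha P_{\pi^*}$. All of these steps are sound, so as a replacement for the missing proof your write-up works.

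One side remark in your second paragraph is false, though harmlessly so: the feasible set is \emph{not} ``precisely $\{V : V \ge V^*\}$''; it is $\{V : V \ge TV\}$, which is in general a proper subset of $\{V : V \ge V^*\}$. For a concrete failure, take a single action and two states with $P(1,2)=P(2,2)=1$ and expected rewards $R(1)=R(2)=1-\alpha$, so that $V^*=(1,1)$; the vector $V=(1,2)$ dominates $V^*$ but violates the constraint at state $1$, since $R(1)+\alpha V(2)=1+\alpha>1=V(1)$. Your uniqueness argument never uses the claimed equality of sets --- it only uses the inclusion (every feasible $V$ satisfies $V\ge V^*$), the feasibility of $V^*$, and $\eta>0$ --- so the conclusion stands; simply delete or weaken the word ``precisely.''
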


It is meaningful to consider the dual LP of~\eqref{eq:DP-LP-form}
because its dual solution is known to be closely related to the
optimal policy
$\pi^*$~\citep{bertsekas1996neuro,puterman2014markov,chen2016stochastic}.
In particular, consider the Lagrangian function
\begin{align*}
&L(V,\lambda)= \eta^T V +\lambda^T (R+ \alpha PV - {\bf 1}_{|{\cal
S}|}\otimes V),
\end{align*}
where $\lambda:= \begin{bmatrix}\lambda_1^T & \cdots &
\lambda_{|{\cal S}||{\cal A}|}^T\end{bmatrix}^T \in {\mathbb
R}^{|{\cal S}||{\cal A}|}$ is the Lagrangian multiplier. Using the
standard results in convex optimization theories, LPs satisfy the
Slater's condition~\citep[Chapter~5]{Boyd2004}, and by the strong duality, the min-max problem
satisfies
\begin{align}\label{eq:saddlepont}
&\min_{V \in {\mathbb R}^{|{\cal S}|}} \max_{\lambda \geq 0} L(V,\lambda)= \max_{\lambda \geq 0} \min_{V \in {\mathbb R}^{|{\cal S}|}} L(V,\lambda).
\end{align}
According to~\citet[Prop.~2.6.1,pp.~132]{bertsekas2003convex}, one concludes that there exists a saddle point $(V^*,\lambda^*)$ satisfying $L(V^*,\lambda)\le L(V^*,\lambda^*)\le L(V,\lambda^*),\forall (V,\lambda) \in {\cal X} \times {\cal Y}$, with ${\cal X}= {\mathbb R}^{|{\cal S}|}$ and ${\cal Y} = {\mathbb R}_+^{|{\cal S}||{\cal A}|}$. In addition, $V^*$ is an optimal
solution of the primal problem~\eqref{eq:DP-LP-form} and $\lambda^*$ an optimal solution of the {\em dual problem}
\begin{align}
&d^*=\max_{\lambda\ge 0}\quad\lambda^T R \quad {\rm s.t.}\quad
\eta + \alpha P^T \lambda = ({\bf 1}_{|{\cal A}|} \otimes I)^T
\lambda.\label{eq:dual-LP}
\end{align}

Similarly to the primal
LP~\eqref{eq:DP-LP-form}, the dual solution is unique, and its
expression can be obtained as follows.
\begin{lemma}[{\citet[Theorem~1]{chen2016stochastic}}]\label{lemma:uniqueness-of-dual-LP-DP}
The dual LP~\eqref{eq:dual-LP} has the unique solution
$\lambda^*:=\begin{bmatrix}
   (\lambda_1^*)^T & \cdots & (\lambda_{|{\cal A}|}^*)^T\\
\end{bmatrix}^T \in {\mathbb R}^{|{\cal S}||{\cal A}|}$ with $\lambda_a^*:=\begin{bmatrix}
   \lambda_a^*(1) & \cdots & \lambda_a^*(|{\cal S}|)\\
\end{bmatrix}^T\in {\mathbb R}^{|{\cal S}|}$ satisfying
\begin{align*}
&\begin{bmatrix}
\lambda_{\pi^*(1)}^*(1)  \\
\vdots\\
\lambda_{\pi^*(|{\cal S}|)}^*(|{\cal S}|)\\
\end{bmatrix} = (I - \alpha(P_{\pi^*})^T)^{-1} \eta,\quad\lambda_a^*(s)= 0\quad {\rm if}\quad a \ne \pi^*(s),\quad s \in {\cal
S}.
\end{align*}
\end{lemma}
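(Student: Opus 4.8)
The plan is to exhibit the candidate $\lambda^*$ explicitly, show it is dual-feasible and attains the optimal value $d^*=p^*$, and finally argue it is the only optimizer. The two engines are complementary slackness (available because strong duality~\eqref{eq:saddlepont} supplies a saddle point) and the Bellman optimality equation satisfied by $V^*$, namely $V^*(s)=\max_{a\in{\cal A}}\left(R_a(s)+\alpha(P_aV^*)(s)\right)$ for every $s\in{\cal S}$.

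First I would read off the support of any optimal multiplier. At the saddle point $(V^*,\lambda^*)$, complementary slackness yields $\lambda_a^*(s)\left(R_a(s)+\alpha(P_aV^*)(s)-V^*(s)\right)=0$ for every $(s,a)$, since $\lambda^*\ge 0$ and each factor $R_a(s)+\alpha(P_aV^*)(s)-V^*(s)\le 0$ by primal feasibility. By the Bellman optimality equation this factor vanishes exactly when $a$ attains the maximum, i.e. when $a=\pi^*(s)$, and is strictly negative otherwise (using that $\pi^*(s)$ is the unique maximizer). Hence $\lambda_a^*(s)=0$ for all $a\ne\pi^*(s)$, which is the second assertion of the lemma.

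Next I would substitute this support into the dual feasibility constraint $\eta+\alpha P^T\lambda=({\bf 1}_{|{\cal A}|}\otimes I)^T\lambda$. Exploiting the Kronecker structure, $({\bf 1}_{|{\cal A}|}\otimes I)^T\lambda=\sum_a\lambda_a$ and $P^T\lambda=\sum_a P_a^T\lambda_a$. Writing $\nu(s):=\lambda_{\pi^*(s)}^*(s)$, the support condition gives $\sum_a\lambda_a^*=\nu$ and $\sum_a P_a^T\lambda_a^*=P_{\pi^*}^T\nu$, so the constraint collapses to $(I-\alpha P_{\pi^*}^T)\nu=\eta$. Since $\alpha\in[0,1)$ and $P_{\pi^*}$ is row-stochastic, $I-\alpha P_{\pi^*}^T$ is invertible with nonnegative inverse $\sum_{k\ge 0}\alpha^k(P_{\pi^*}^T)^k$, giving $\nu=(I-\alpha P_{\pi^*}^T)^{-1}\eta>0$ because $\eta>0$. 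This is precisely the claimed formula and simultaneously certifies $\lambda^*\ge 0$. To confirm optimality I would evaluate the objective: $(\lambda^*)^TR=\sum_s\nu(s)R_{\pi^*}(s)=\nu^TR_{\pi^*}=\eta^T(I-\alpha P_{\pi^*})^{-1}R_{\pi^*}=\eta^TV^*=p^*$, where the penultimate equality is \cref{lemma:uniqueness-of-LP-DP}; strong duality then shows $\lambda^*$ is a maximizer.

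The remaining and most delicate point is uniqueness. For this I would take an arbitrary optimal dual solution $\bar\lambda$; complementary slackness against the unique primal optimum $V^*$ forces $\bar\lambda_a(s)=0$ on every strictly slack constraint, i.e. on every non-maximizing action $a$, so $\bar\lambda$ inherits exactly the same support on $\{(s,\pi^*(s))\}$. The reduced feasibility system $(I-\alpha P_{\pi^*}^T)\bar\nu=\eta$ then has the single solution $\bar\nu=\nu$ by invertibility, whence $\bar\lambda=\lambda^*$. The main obstacle is precisely this uniqueness step: it hinges on $\pi^*(s)$ being the \emph{unique} Bellman maximizer at each $s$ (a non-degeneracy condition), since ties would leave additional tight constraints and permit alternative dual occupancy measures; I would therefore either invoke this assumption explicitly or restrict attention to the case of a unique optimal action, as is implicit in the statement.
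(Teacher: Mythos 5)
The paper offers no proof of this lemma at all---it is imported verbatim, as the bracketed citation indicates, from \citet[Theorem~1]{chen2016stochastic}---so your proposal has to be judged against that source's standard argument rather than anything in the text. Your proof is correct and is essentially that standard argument: complementary slackness at the saddle point (available from strong duality~\eqref{eq:saddlepont}) pins the support of any dual optimum to the Bellman-maximizing actions; the Kronecker identities $({\bf 1}_{|{\cal A}|}\otimes I)^T\lambda=\sum_{a}\lambda_a$ and $P^T\lambda=\sum_a P_a^T \lambda_a$ collapse dual feasibility to $(I-\alpha P_{\pi^*}^T)\nu=\eta$; the Neumann series gives invertibility of $I-\alpha P_{\pi^*}^T$ with nonnegative inverse, hence the closed form and $\lambda^*\ge 0$; and the evaluation $(\lambda^*)^T R=\eta^T V^*=p^*$ via \cref{lemma:uniqueness-of-LP-DP} certifies optimality by weak duality, with uniqueness obtained by running complementary slackness against the unique primal optimum $V^*$. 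The one caveat you raise is genuine and correctly diagnosed: the uniqueness assertion really does require $\pi^*(s)$ to be the \emph{unique} maximizer of $R_a(s)+\alpha(P_a V^*)(s)$ at every $s$. If some state had tied optimal actions, the constraints for the tied actions would also be active, complementary slackness would no longer zero out those multipliers, and the dual optimal set would contain the occupancy measures of every optimal policy together with their convex combinations, so uniqueness would fail. This non-degeneracy hypothesis is implicit both in this paper and in the cited theorem (the notation $\pi^*$ presupposes a single optimal policy); making it explicit, as you do, is the right resolution and not a defect of your argument.
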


Once the dual optimal solution is obtained, then the optimal policy can be recovered by
\begin{align*}
&\pi^*(s): = \begin{bmatrix}
   \frac{\lambda_{1}^*(s)}{\sum_{a'\in {\cal A}}{\lambda_{a'}^*(s)}} &\cdots  & \frac{\lambda_{|{\cal A}|}^*(s)}{\sum_{a'\in {\cal A}}{\lambda_{a'}^*(s)}}\\
\end{bmatrix}^T\in\Delta_{|{\cal A}|}.
\end{align*}
It is known that the optimal policy is always deterministic~\citep{puterman2014markov}. In summary, the Markov decision problem can be solved by finding the optimal solution of the dual LP~\eqref{eq:dual-LP},
while the optimal cost $V^*$ can be found by solving the primal
LP~\eqref{eq:DP-LP-form}.

Based on~\cref{lemma:uniqueness-of-dual-LP-DP}, we obtain the
following corollary.
\begin{corollary}\label{corollary:uniqueness-of-dual-LP-DP}
The dual LP~\eqref{eq:dual-LP} solution $\lambda^*:=
\begin{bmatrix}
(\lambda_1^*)^T & \cdots & (\lambda_{|{\cal A}|}^*)^T \\
\end{bmatrix}^T\in {\mathbb R}^{|{\cal S}||{\cal A}|}$ satisfies
\begin{align*}
&\eta= (I - \alpha (P_{\pi^*})^T )\sum_{a \in {\cal
A}}{\lambda_a^*}.
\end{align*}
\end{corollary}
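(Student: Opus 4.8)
The plan is to derive the identity as an immediate algebraic consequence of the explicit description of $\lambda^*$ furnished by \cref{lemma:uniqueness-of-dual-LP-DP}. The essential ingredient is the \emph{support structure} of the dual optimizer: for each state $s \in {\cal S}$, the only possibly nonzero coordinate among $\lambda_1^*(s), \ldots, \lambda_{|{\cal A}|}^*(s)$ is the one indexed by the optimal action, namely $\lambda_{\pi^*(s)}^*(s)$, since the lemma guarantees $\lambda_a^*(s) = 0$ whenever $a \ne \pi^*(s)$.

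First I would form the vector $\sum_{a \in {\cal A}} \lambda_a^* \in {\mathbb R}^{|{\cal S}|}$ and examine it coordinatewise. Its $s$-th entry equals $\sum_{a \in {\cal A}} \lambda_a^*(s)$, which by the support structure collapses to the single surviving term $\lambda_{\pi^*(s)}^*(s)$. Hence the stacked vector $\sum_{a \in {\cal A}} \lambda_a^*$ coincides exactly with the vector $\begin{bmatrix} \lambda_{\pi^*(1)}^*(1) & \cdots & \lambda_{\pi^*(|{\cal S}|)}^*(|{\cal S}|) \end{bmatrix}^T$ appearing on the left-hand side of the first identity in \cref{lemma:uniqueness-of-dual-LP-DP}.

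Invoking that lemma then gives $\sum_{a \in {\cal A}} \lambda_a^* = (I - \alpha (P_{\pi^*})^T)^{-1} \eta$. Left-multiplying both sides by $I - \alpha (P_{\pi^*})^T$ — which is invertible because $\alpha \in [0,1)$ and $P_{\pi^*}$ is row-stochastic, so that $\alpha P_{\pi^*}$ has spectral radius strictly below one — and rearranging yields $\eta = (I - \alpha (P_{\pi^*})^T) \sum_{a \in {\cal A}} \lambda_a^*$, which is precisely the claim. There is no genuine obstacle here; the only point requiring a little care is the bookkeeping that identifies the action-summed vector with the stacked optimal-action coordinates of the lemma, after which the result is immediate.
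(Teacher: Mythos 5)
Your proof is correct and matches the paper's intended argument: the paper gives no separate proof for this corollary precisely because it follows from \cref{lemma:uniqueness-of-dual-LP-DP} exactly as you describe, by using the support condition $\lambda_a^*(s)=0$ for $a \ne \pi^*(s)$ to identify $\sum_{a\in{\cal A}}\lambda_a^*$ with the stacked vector $\bigl[\lambda_{\pi^*(1)}^*(1)\;\cdots\;\lambda_{\pi^*(|{\cal S}|)}^*(|{\cal S}|)\bigr]^T$ and then left-multiplying the lemma's identity by the invertible matrix $I-\alpha(P_{\pi^*})^T$. Your justification of invertibility via the spectral radius of $\alpha P_{\pi^*}$ is also the standard one and is sound.
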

Based on~\cref{lemma:uniqueness-of-LP-DP} and~\cref{lemma:uniqueness-of-dual-LP-DP}, bounds on optimal
primal and dual solutions can be obtained, and those bounds are used
in the next section in the algorithm development and its analysis.
\begin{lemma}\label{lemma:bound-lemma}
Let $(V^*,\lambda^*)$ be the unique optimal primal-dual pair
solving~\eqref{eq:DP-LP-form} and~\eqref{eq:dual-LP}. Then,
\begin{align*}
&\|V^*\|_\infty \le\frac{\sigma}{1-\alpha},\quad \|V^*\|_2  \le
\frac{\sqrt{|{\cal S}|}\sigma}{1-\alpha},\quad \|\lambda^*\|_2 \le
\| \lambda^* \|_1\le \frac{\|\eta\|_1}{1-\alpha},\quad \sum_{a\in
{\cal A}}{\lambda_a^*}\ge\eta,\\
&\|\lambda^* \|_\infty\le\frac{\|\eta\|_1}{1-\alpha},
\end{align*}
where $\sigma>0$ is an upper bound on the random reward, $\hat r_{ss'a} \in [0,\sigma]$, $s,s'\in {\cal S},a\in {\cal A}$.
\end{lemma}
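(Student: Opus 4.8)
The plan is to leverage the closed-form expressions already obtained in \cref{lemma:uniqueness-of-LP-DP}, \cref{lemma:uniqueness-of-dual-LP-DP}, and \cref{corollary:uniqueness-of-dual-LP-DP}, together with the single structural fact that $P_{\pi^*}$ is row-stochastic, so that its Neumann series $(I-\alpha P_{\pi^*})^{-1}=\sum_{k=0}^\infty \alpha^k P_{\pi^*}^k$ converges for $\alpha\in[0,1)$ and satisfies $\|(I-\alpha P_{\pi^*})^{-1}\|_\infty \le \sum_{k=0}^\infty \alpha^k \|P_{\pi^*}^k\|_\infty=\frac{1}{1-\alpha}$, since each power $P_{\pi^*}^k$ is again row-stochastic and hence has unit $\infty$-norm. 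This single estimate drives essentially all five bounds.

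For the primal bounds I would start from $V^*=(I_{|{\cal S}|}-\alpha P_{\pi^*})^{-1}R_{\pi^*}$ of \cref{lemma:uniqueness-of-LP-DP}. Because each entry $R_{\pi^*}(s)$ is a convex combination $\sum_{s'}P_{\pi^*}(s,s')r_{s s'\pi^*(s)}$ of rewards lying in $[0,\sigma]$, we have $\|R_{\pi^*}\|_\infty \le \sigma$. Submultiplicativity then gives $\|V^*\|_\infty \le \|(I-\alpha P_{\pi^*})^{-1}\|_\infty\|R_{\pi^*}\|_\infty \le \frac{\sigma}{1-\alpha}$, and the Euclidean bound follows from the elementary inequality $\|V^*\|_2\le\sqrt{|{\cal S}|}\,\|V^*\|_\infty$.

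For the dual bounds I would use that by \cref{lemma:uniqueness-of-dual-LP-DP} the only nonzero coordinates of $\lambda^*$ are the $\lambda_{\pi^*(s)}^*(s)$, which together form the vector $(I-\alpha(P_{\pi^*})^T)^{-1}\eta$; hence $\|\lambda^*\|_1$ equals the $1$-norm of that vector. Transferring the operator-norm estimate across the transpose via $\|A^T\|_1=\|A\|_\infty$ yields $\|(I-\alpha(P_{\pi^*})^T)^{-1}\|_1=\|(I-\alpha P_{\pi^*})^{-1}\|_\infty\le\frac{1}{1-\alpha}$, so $\|\lambda^*\|_1\le\frac{\|\eta\|_1}{1-\alpha}$. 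The remaining two dual bounds are then immediate from the vector-norm inequalities $\|\lambda^*\|_2\le\|\lambda^*\|_1$ and $\|\lambda^*\|_\infty\le\|\lambda^*\|_1$. Finally, for $\sum_{a}\lambda_a^*\ge\eta$ I would invoke \cref{corollary:uniqueness-of-dual-LP-DP} to write $\sum_{a}\lambda_a^*=(I-\alpha(P_{\pi^*})^T)^{-1}\eta=\sum_{k=0}^\infty\alpha^k((P_{\pi^*})^T)^k\eta$; since $\eta$ has positive entries and every matrix $((P_{\pi^*})^T)^k$ is entrywise nonnegative, all summands are nonnegative, so dropping all but the $k=0$ term gives the claimed element-wise inequality.

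There is no serious obstacle here — the entire argument is a bookkeeping exercise built on the Neumann series. The only point requiring care is the matrix-norm dualities: one must keep straight that $\|\cdot\|_\infty$ is the maximum row sum while $\|\cdot\|_1$ is the maximum column sum, so that the $\frac{1}{1-\alpha}$ estimate proved for $P_{\pi^*}$ in the $\infty$-norm transfers correctly to $(P_{\pi^*})^T$ in the $1$-norm, and that vector and matrix norms sharing the same symbol are not conflated.
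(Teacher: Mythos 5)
Your proof is correct, but it is genuinely different in character from what the paper does: the paper's own proof is essentially a citation, deferring the first four bounds to \citet[Lemma~1]{chen2016stochastic} and then obtaining the fifth bound from the chain $\|\lambda^*\|_\infty \le \|\lambda^*\|_2 \le \|\lambda^*\|_1$ together with the third result. You instead give a self-contained derivation built entirely on in-paper ingredients — the closed forms $V^*=(I-\alpha P_{\pi^*})^{-1}R_{\pi^*}$ from \cref{lemma:uniqueness-of-LP-DP}, the support structure of $\lambda^*$ from \cref{lemma:uniqueness-of-dual-LP-DP}, the identity in \cref{corollary:uniqueness-of-dual-LP-DP} — combined with the Neumann series bound $\|(I-\alpha P_{\pi^*})^{-1}\|_\infty \le \frac{1}{1-\alpha}$ for a row-stochastic matrix. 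All your steps check out: $\|R_{\pi^*}\|_\infty\le\sigma$ since each entry is a convex combination of expected rewards in $[0,\sigma]$; the transfer $\|(I-\alpha(P_{\pi^*})^T)^{-1}\|_1=\|(I-\alpha P_{\pi^*})^{-1}\|_\infty$ via the row/column-sum duality of induced norms is exactly right; and the element-wise inequality $\sum_a \lambda_a^*\ge\eta$ follows correctly by keeping only the $k=0$ term of the nonnegative Neumann series. What your approach buys is transparency and self-containment: it exposes why every bound carries the $\frac{1}{1-\alpha}$ factor (geometric accumulation under a stochastic transition matrix), and it in fact shows the $\|\lambda^*\|_1$ bound holds with equality when $\eta>0$, since ${\bf 1}^T((P_{\pi^*})^T)^k\eta = \|\eta\|_1$ for every $k$. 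What the paper's approach buys is brevity, at the cost of making the reader chase the external reference. One cosmetic remark: for the last bound the paper routes through $\|\lambda^*\|_\infty\le\|\lambda^*\|_2$, while you go directly via $\|\lambda^*\|_\infty\le\|\lambda^*\|_1$; both are valid vector-norm inequalities, so this is immaterial.
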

\begin{proof}
Proofs of the first four results can be found
in~\citet[Lemma~1]{chen2016stochastic}. The last result is obtained
from the inequality $\|\lambda^* \|_\infty \le \|\lambda^*\|_2$
for any $\lambda^*$ and the third result.
\end{proof}

\subsection{Saddle point problem}
In this subsection, we briefly introduce the concept of the saddle point problem.
\begin{definition}[Saddle
point~{\citep[Def.~2.1.6, pp.~131]{bertsekas2003convex}}]\label{def:saddle-point}
Consider the map $L:{\cal X} \times {\cal Y} \to {\mathbb R}$,
where ${\cal X}$ and ${\cal Y}$ are convex sets. A pair $(x^*,y^*)$ that satisfies
\begin{align*}
&L(x^*,y)\le L(x^*,y^*)\le L(x,y^*),\quad \forall (x,y) \in {\cal
X} \times {\cal Y}
\end{align*}
is called, if exists, a saddle point of $L$. The saddle
point problem is defined as the problem of finding a saddle point
$(x^*,y^*)$.
\end{definition}
Note that $(x^*,y^*)$ is a saddle point if and only if $x^* \in {\cal X}$, $y^* \in {\cal Y}$, and $\sup_{y\in {\cal Y}} L(x^*,y)=L(x^*,y^*)=\inf_{x\in {\cal X}}L(x,y^*)$. The following proposition establishes a relation between the saddle point and optimization problems.
\begin{proposition}[{\citet[Prop.~2.6.1, pp.~132]{bertsekas2003convex}}]\label{proposition:saddle-point-relation}
The point $(x^*,y^*)$ is a saddle point of $L$ if and only if (a) $x^* \in {\cal X}$, $y^* \in {\cal Y}$, and $\sup_{y\in {\cal Y}} L(x^*,y)=L(x^*,y^*)=\inf_{x\in {\cal X}}L(x,y^*)$, (b) $x^*$ is an optimal solution of the {\rm primal problem} $\min_{x\in {\cal X}}\{\overline{L}(x):=\max_{y \in {\cal Y}}L(x,y)\}$, and (c) $y^*$ is an optimal solution of the {\rm dual problem} $\max_{y \in {\cal Y}}\{\underline{L}(y):=\min_{x\in {\cal X}}L(x,y)\}$.
\end{proposition}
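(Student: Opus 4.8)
The plan is to prove the two directions of the equivalence separately, relying on a single structural fact, namely weak duality: for every $(x,y)\in{\cal X}\times{\cal Y}$ we have $\underline{L}(y)\le L(x,y)\le \overline{L}(x)$, and taking the infimum over $x$ and supremum over $y$ yields $\sup_{y\in{\cal Y}}\underline{L}(y)\le \inf_{x\in{\cal X}}\overline{L}(x)$. Everything else follows from manipulating the defining saddle-point inequalities together with this bound.

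For the forward direction I would assume $(x^*,y^*)$ is a saddle point, so $L(x^*,y)\le L(x^*,y^*)\le L(x,y^*)$ for all $(x,y)$. Taking the supremum over $y$ in the left inequality gives $\overline{L}(x^*)\le L(x^*,y^*)$, and since the reverse inequality is trivial (evaluate at $y=y^*$), I obtain $\overline{L}(x^*)=L(x^*,y^*)$. Symmetrically, taking the infimum over $x$ in the right inequality gives $\underline{L}(y^*)=L(x^*,y^*)$. These two identities are exactly statement (a). To obtain (b) and (c) I would then chain them with weak duality through the string $\underline{L}(y^*)\le \sup_{y}\underline{L}(y)\le \inf_{x}\overline{L}(x)\le \overline{L}(x^*)$, whose two ends coincide by (a); hence every inequality is in fact an equality, forcing $\overline{L}(x^*)=\inf_{x}\overline{L}(x)$ (so $x^*$ solves the primal) and $\underline{L}(y^*)=\sup_{y}\underline{L}(y)$ (so $y^*$ solves the dual).

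For the reverse direction it suffices to use (a) alone: from $\sup_{y}L(x^*,y)=\overline{L}(x^*)=L(x^*,y^*)$ I read off $L(x^*,y)\le L(x^*,y^*)$ for all $y$, and from $\inf_{x}L(x,y^*)=\underline{L}(y^*)=L(x^*,y^*)$ I read off $L(x^*,y^*)\le L(x,y^*)$ for all $x$; together these are precisely the saddle-point inequalities, so $(x^*,y^*)$ is a saddle point. The additional hypotheses (b) and (c) are not needed in this direction, which is consistent with the note preceding the proposition that (a) by itself already characterizes saddle points.

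I do not anticipate a genuine obstacle, since this is a classical minimax argument; the one place that demands care is the squeeze step in the forward direction, where the equality $\overline{L}(x^*)=\underline{L}(y^*)$ coming from (a) must be combined with weak duality to collapse the entire chain and thereby deliver, simultaneously, optimality of $x^*$ for the primal problem and of $y^*$ for the dual problem.
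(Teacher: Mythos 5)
Your proof is correct. Note, however, that the paper does not prove this proposition at all: it is imported verbatim from Bertsekas (Prop.~2.6.1), and the text immediately preceding it already records the fact you exploit, namely that condition (a) alone characterizes saddle points. Your argument is the standard one and is essentially what appears in Bertsekas's book: the saddle inequalities give $\overline{L}(x^*)=L(x^*,y^*)=\underline{L}(y^*)$ directly, weak duality $\sup_{y}\underline{L}(y)\le\inf_{x}\overline{L}(x)$ then collapses the chain $\underline{L}(y^*)\le\sup_{y}\underline{L}(y)\le\inf_{x}\overline{L}(x)\le\overline{L}(x^*)$ into equalities, yielding primal optimality of $x^*$ and dual optimality of $y^*$; and conversely (a) alone reassembles the two saddle inequalities. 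The only point worth flagging is cosmetic: the proposition states (b) and (c) with $\max$/$\min$ while your argument works with $\sup$/$\inf$; this is harmless because optimality of $x^*$ and $y^*$ is equivalent under either reading, and your identity $\overline{L}(x^*)=L(x^*,y^*)$ shows the supremum defining $\overline{L}(x^*)$ is in fact attained at $y^*$ (similarly for $\underline{L}(y^*)$), so no attainment issue arises at the optimal pair.
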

Lastly, we formally define the saddle point problem.
\begin{definition}[Saddle point problem]
Consider the map $L:{\cal X} \times {\cal Y} \to {\mathbb R}$,
where ${\cal X}$ and ${\cal Y}$ are convex sets. Assume that the saddle point $(x^*,y^*)$ of $L$ exists. Then, the saddle point problem is defined as the problem of finding saddle points
$(x^*,y^*)$ which satisfy the primal and dual optimizations
\begin{align*}
&\max_{y \in {\cal Y}}\min_{x \in {\cal X}} L(x,y)=\min_{x\in {\cal X}}\max_{y\in {\cal Y}} L(x,y).
\end{align*}
\end{definition}

\subsection{Stochastic primal-dual RL}
Recently, a stochastic primal-dual algorithm (SPD-RL) was proposed in~\citet{wang2016online,chen2016stochastic} to solve the convex-concave saddle point problem in (\ref{eq:saddlepont}), which updates the primal and dual solutions simultaneously using noisy estimates of partial derivatives of the Lagrangian
function obtained from samples of state-action transitions. Particularly, the SPD-RL algorithm in~\citet{wang2016online} uses the uniform state-action distribution to sample the current state and action. From this observation, a natural question arises: can we develop an SPD-RL algorithm under stationary state-action distributions induced from behavior policies? This question is important in terms of applicability of the SPD-RL to real-world learning tasks where samples are only available from the state-action trajectories. One possible approach is to solve the saddle point problem corresponding to the modified LP
\begin{align}
&\min_V \quad \eta^T V\quad {\rm s.t.}\quad \alpha M_a P_a V + M_a R_a \le
M_a V,\quad a\in {\cal A},\label{intro:LP1b}
\end{align}
where $M_a$ is a positive diagonal matrix whose diagonal elements are the stationary state-action distribution under a certain behavior policy with the fixed action $a \in {\cal A}$. While this approach successfully estimates the optimal value function $V^*$, it fails to recover the optimal policy $\pi^*$  because the dual optimal solution of~\eqref{intro:LP1b}, $\{M_a^{-1} \lambda_a^*\}_{a\in {\cal A}}$, is different from that of~\eqref{eq:DP-LP-form}. This implies that to obtain the exact dual optimal solution, one needs the knowledge of the state-action distribution, $M_a,a\in {\cal A}$, which is not directly available without additional sampling and estimation steps.

\section{Stochastic Primal-Dual Q-Learning Algorithm}\label{section:SPD-Q-learning}

In this section, we introduce an SPD Q-learning algorithm to overcome the challenges described in the last subsection by integrating the primal-dual algorithm with Q-learning. The main advantage of the Q-learning lies in the fact that instead of the value function, it estimates the value function, $Q_a,a\in {\cal A}$, corresponding to the state-action pair, called the Q-function, and the optimal policy can be directly recovered from the optimal Q-function, $Q_a^*,a\in {\cal A}$, without the model information, i.e., $\pi^*(s)={\rm argmax}_a Q_a^*(s),s \in {\cal S}$. For any given deterministic
policy $\pi$, the action value function or
Q-function~\citep{bertsekas1996neuro} is defined as
\begin{align*}
Q_a^\pi(s):=& {\mathbb E}\left[ \left. \sum_{k=0}^\infty {\alpha^k
\hat r_{s_k s_{k + 1} a_k}} \right|s_0=s,a_0= a,a_k=\pi(s_k),k \ge 1\right],
\end{align*}
and the optimal Q-function is $Q_a^*(s):=Q_a^{\pi^*}(s)$. Consider the
corresponding vector
\begin{align*}
&Q_a^\pi:= \begin{bmatrix}
Q_a^\pi(1) & \cdots & Q_a^\pi(|{\cal S}|) \\
\end{bmatrix}^T\in {\mathbb R}^{|{\cal S}|}.
\end{align*}
Using the definition of $R_a \in {\mathbb R}^{|{\cal
S}|}$ and the Q-function, one easily proves the relation between $Q_a^*$ and $V^*$: $Q_a^*=
\alpha P_a V^* +R_a$~\citep{bertsekas1996neuro}.

\subsection{LP formulation of dynamic programming with Q-function}

Motivated by this observation, we propose to consider the modified LP form
\begin{align}
&p^*_Q:= \min_{V\in {\mathbb R}^{|{\cal S}|},Q\in {\mathbb R}^{|{\cal S}||{\cal A}|}} \eta^T V\quad {\rm s.t.}\quad Q_a\le V,\quad \alpha P_a V + R_a = Q_a,\quad a
\in {\cal A},\label{eq:DP-LP-form2}
\end{align}
where $Q_a\in {\mathbb R}^{|{\cal S}|},a\in {\cal A}$. Compared
to~\eqref{eq:DP-LP-form}, the transition matrix $P_a$ and the inequality symbol are decoupled in~\eqref{eq:DP-LP-form2}.
To simplify the notation,
define the augmented vector
\begin{align*}
&Q:= \begin{bmatrix}
Q_1 \\
\vdots \\
Q_{|{\cal A}|} \\
\end{bmatrix} \in {\mathbb R}^{|{\cal S}||{\cal A}|}.
\end{align*}
Then, the LP form~\eqref{eq:DP-LP-form2} can be compactly
rewritten by
\begin{align}
&p^*_Q := \mathop {\rm min}_{V\in {\mathbb R}^{|{\cal S}|},Q\in {\mathbb R}^{|{\cal S}||{\cal A}|}}\eta^T
V\quad {\rm s.t.}\quad Q \le ({\bf 1}_{|{\cal A}|} \otimes I_{|{\cal
S}|} )V,\quad \alpha PV + R = Q.\label{eq:LP-form-Q-learning}
\end{align}

Since introducing the additional equality
constraints in~\eqref{eq:DP-LP-form2} does not affect the solution
$V^*$, we can easily prove that the optimal solution $V^*$
of~\eqref{eq:DP-LP-form2} is identical to that
of~\eqref{eq:DP-LP-form}.
\begin{lemma}\label{lemma:equivalence}
\begin{enumerate}
\item $(Q_a^*,V^*)_{a\in {\cal A}}$ is an optimal solution to the
LP~\eqref{eq:LP-form-Q-learning} if and only if $V^*$ is an
optimal solution to~\eqref{eq:DP-LP-form} and $Q_a^*=R+\alpha P_a
V^*,a \in {\cal A}$.

\item The optimal solution, $(Q_a^*,V^*)_{a\in {\cal A}}$,
to~\eqref{eq:LP-form-Q-learning} is unique.
\end{enumerate}
\end{lemma}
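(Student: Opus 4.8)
The plan is to eliminate the auxiliary variable $Q$ from~\eqref{eq:LP-form-Q-learning} using its defining equality constraint, thereby reducing~\eqref{eq:LP-form-Q-learning} to the original LP~\eqref{eq:DP-LP-form}. The crucial structural observation is that the equality constraint $\alpha PV + R = Q$ expresses $Q$ as an affine function of $V$ alone, while the objective $\eta^T V$ does not involve $Q$ at all. Hence I would introduce the substitution map $V \mapsto (V, \alpha PV + R)$ and argue that it is a bijection between the feasible set of~\eqref{eq:DP-LP-form} (viewed as a set of $V$'s) and the feasible set of~\eqref{eq:LP-form-Q-learning} (a set of pairs $(V,Q)$).

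For the forward bookkeeping, I would plug $Q = \alpha PV + R$ into the remaining inequality constraint $Q \le ({\bf 1}_{|{\cal A}|}\otimes I_{|{\cal S}|})V$ of~\eqref{eq:LP-form-Q-learning}; this yields exactly $R + \alpha PV \le ({\bf 1}_{|{\cal A}|}\otimes I_{|{\cal S}|})V$, which is precisely the constraint of~\eqref{eq:DP-LP-form}. Conversely, any feasible pair $(V,Q)$ of~\eqref{eq:LP-form-Q-learning} must satisfy $Q = \alpha PV + R$ by the equality constraint, so it is the image of its own $V$-component, which is then feasible for~\eqref{eq:DP-LP-form}. Since the objective value $\eta^T V$ is preserved under this correspondence, the two problems share the same optimal value, $p^*_Q = p^*$, and a pair $(Q^*,V^*)$ is optimal for~\eqref{eq:LP-form-Q-learning} if and only if $V^*$ is optimal for~\eqref{eq:DP-LP-form} and $Q^* = \alpha PV^* + R$, equivalently $Q_a^* = R_a + \alpha P_a V^*$ for $a \in {\cal A}$. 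This establishes part~1.

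Part~2 then follows immediately: by~\cref{lemma:uniqueness-of-LP-DP} the original LP~\eqref{eq:DP-LP-form} has the unique optimal solution $V^*$, and by part~1 any optimal $(Q^*,V^*)$ of~\eqref{eq:LP-form-Q-learning} has its $V$-component equal to this unique minimizer and its $Q$-component pinned down by the equality constraint $Q^* = \alpha PV^* + R$. Hence the optimal pair is unique.

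I do not anticipate a genuine obstacle here; the argument is essentially a change of variables. The only points requiring care are (i) verifying that the substitution is onto, i.e., that for every feasible $V$ of~\eqref{eq:DP-LP-form} the induced $Q = \alpha PV + R$ is genuinely feasible for~\eqref{eq:LP-form-Q-learning} (immediate, since the equality holds by construction and the inequality is the translated primal constraint), and (ii) keeping both directions of the ``if and only if'' explicit, since optimality---not merely feasibility---must be transported across the correspondence.
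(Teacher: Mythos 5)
Your proof is correct and follows the same route the paper intends: the paper dismisses part~1 as trivial (it is exactly your change-of-variables/elimination argument, with $Q$ pinned down by the equality constraint) and derives part~2 from part~1 together with the uniqueness in \cref{lemma:uniqueness-of-LP-DP}, precisely as you do. Your write-up simply makes explicit the feasibility correspondence and objective preservation that the paper leaves unstated.
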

\begin{proof}
The first statement is trivial, and the second statement can be
directly proved using the first result.
\end{proof}

If $(V^*,Q^*)$ is an optimal solution to the LP, then $V^*$ is the
optimal value function, and $Q^*$ is the corresponding optimal
Q-factor. Once the optimal solution, $V^*,Q_a^*,a\in {\cal A}$, of~\eqref{eq:DP-LP-form2} is obtained, then $V^*$ is the optimal value function, and $Q_a^*,a\in {\cal A}$, is the optimal Q-function. Therefore, the optimal policy can be obtained using the primal solution, $Q_a^*,a\in {\cal A}$, as in the classical Q-learning.
Moreover, it can be recovered from the optimal dual solution as well. To study its dual problem, introduce the Lagrangian multipliers, $\lambda:=[\lambda_1^T,\cdots,\lambda_{|{\cal A}|}^T]^T$, for the inequality constraints, $\mu:=[\mu_1^T,\cdots,\mu_{|{\cal A}|}^T]^T$, for the equality constraints, and consider the Lagrangian function
\begin{align}
&L_I(Q,V,\lambda,\mu):=\eta^T V + \mu^T (\alpha PV + R - Q) +
\lambda^T (Q - ({\bf 1}_{|{\cal A}|} \otimes I_{|{\cal S}|}
)V)\label{eq:Lagrangian-LP-form-Q-learning}
\end{align}
for~\eqref{eq:LP-form-Q-learning}. Similarly to the original LP, the min-max problem satisfies
\begin{align}
&\min_{(V,\,Q)\in {\mathbb R}^{|{\cal S}|} \times {\mathbb R}^{|{\cal S}||{\cal A}|}} \max_{(\lambda,\,\mu) \in {\mathbb R}_+^{|{\cal S}||{\cal A}|} \times {\mathbb R}^{|{\cal S}||{\cal A}|}} L_I(Q,V,\lambda,\mu)\nonumber\\
&=\max_{(\lambda,\,\mu) \in {\mathbb R}_+^{|{\cal S}||{\cal A}|} \times {\mathbb R}^{|{\cal S}||{\cal A}|}} \min_{(V,\,Q)\in {\mathbb R}^{|{\cal S}|}\times {\mathbb R}^{|{\cal S}||{\cal A}|}} L_I
(Q,V,\lambda,\mu).\label{eq:saddle-point-1}
\end{align}
According to~\cref{proposition:saddle-point-relation}, there exists a saddle point $(V^*,Q^*,\lambda^*,\mu^*)$ satisfying $L(V^*,Q^*,\lambda,\mu)\le L(V,Q,\lambda,\mu)\le L(V,Q,\lambda^*,\mu^*),\forall (V,Q,\lambda,\mu) \in {\cal X} \times {\cal Y}$ with ${\cal X}= {\mathbb R}^{|{\cal S}|}\times {\mathbb R}^{|{\cal S}||{\cal A}|}$ and ${\cal Y} = {\mathbb R}_+^{|{\cal S}||{\cal A}|} \times {\mathbb R}^{|{\cal S}||{\cal A}|}$. In addition, its corresponding dual problem is given by
\begin{align}
&d_Q^*:= \max_{\mu \in {\mathbb R}^{|{\cal S}||{\cal A}|},\lambda\ge 0} \quad \mu^T R,\quad {\rm s.t.}\quad \eta+ \alpha P^T\mu -({\bf 1}_{|{\cal A}|} \otimes I_{|{\cal S}|})^T \lambda= 0,\quad \lambda=\mu.\label{eq:dual-LP-form-Q-learning}
\end{align}
We can prove that the dual optimal solution $(\tilde
\lambda^*,\tilde \mu^*)$ is $(\tilde\lambda^*,\tilde\mu^*)=(\lambda^*,\lambda^*)$, where $\lambda^*$ is the optimal dual solution in~\cref{lemma:uniqueness-of-dual-LP-DP}.
\begin{lemma}
The unique optimal solution $(\tilde \lambda^*,\tilde \mu^*)$ of
the dual problem~\eqref{eq:dual-LP-form-Q-learning} is $(\tilde
\lambda^*,\tilde\mu^*)=(\lambda^*,\lambda^*)$, where $\lambda^*$
is the optimal dual solution
in~\cref{lemma:uniqueness-of-dual-LP-DP}.
\end{lemma}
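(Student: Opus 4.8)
The plan is to reduce the dual problem~\eqref{eq:dual-LP-form-Q-learning} to the original dual LP~\eqref{eq:dual-LP} by exploiting the equality constraint $\lambda = \mu$, and then to read off the unique optimizer directly from~\cref{lemma:uniqueness-of-dual-LP-DP}. The key observation is that the extra primal variable $Q$ in~\eqref{eq:LP-form-Q-learning} contributes, through its associated multiplier, exactly the coupling $\lambda = \mu$, which collapses the two multiplier blocks into a single one and leaves a problem that coincides verbatim with~\eqref{eq:dual-LP}.

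First I would record how the constraints in~\eqref{eq:dual-LP-form-Q-learning} arise, to confirm the reduction is faithful rather than a relaxation. Minimizing the Lagrangian~\eqref{eq:Lagrangian-LP-form-Q-learning} over the unconstrained primal pair $(V,Q)\in{\mathbb R}^{|{\cal S}|}\times{\mathbb R}^{|{\cal S}||{\cal A}|}$, the coefficient of $V$ is $\eta + \alpha P^T\mu - ({\bf 1}_{|{\cal A}|}\otimes I_{|{\cal S}|})^T\lambda$ and the coefficient of $Q$ is $\lambda - \mu$; for the inner infimum to be finite rather than $-\infty$, both coefficients must vanish, yielding precisely the two dual constraints with residual objective $\mu^T R$. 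Substituting the constraint $\mu = \lambda$ into the remaining linear equality turns it into $\eta + \alpha P^T\lambda = ({\bf 1}_{|{\cal A}|}\otimes I_{|{\cal S}|})^T\lambda$ and the objective into $\lambda^T R$, which is exactly the feasible set and objective of~\eqref{eq:dual-LP} over the single variable $\lambda\ge 0$. Hence the two problems share the same optimal value and the same set of maximizers, and by~\cref{lemma:uniqueness-of-dual-LP-DP} this maximizer is the unique point $\lambda^*$; restoring $\mu = \lambda = \lambda^*$ gives $(\tilde\lambda^*,\tilde\mu^*) = (\lambda^*,\lambda^*)$, with uniqueness inherited from that lemma.

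Since the equality $\lambda = \mu$ genuinely eliminates one multiplier block, there is essentially no analytic obstacle here; the only point requiring mild care is to verify that dropping $\mu$ is an exact reformulation and not a relaxation, that is, that no feasible point of~\eqref{eq:dual-LP-form-Q-learning} can violate $\lambda = \mu$. This is immediate because $\lambda = \mu$ is itself one of the equality constraints of~\eqref{eq:dual-LP-form-Q-learning}, so the substitution is lossless and the identification with~\eqref{eq:dual-LP} is complete.
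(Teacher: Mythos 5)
Your proof is correct and follows essentially the same route as the paper: substitute the equality constraint $\mu=\lambda$ to collapse~\eqref{eq:dual-LP-form-Q-learning} into~\eqref{eq:dual-LP}, then invoke the uniqueness of $\lambda^*$ from \cref{lemma:uniqueness-of-dual-LP-DP}. The only difference is that you additionally re-derive the dual constraints from the Lagrangian, which the paper takes as given; this is harmless extra verification, not a different argument.
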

\begin{proof}
Since $(\tilde \lambda^*,\tilde \mu^*)$ is feasible, $\tilde \mu^*
= \tilde \lambda^*$ by the constraint $\mu=\lambda$
in~\eqref{eq:dual-LP-form-Q-learning}. Plugging $\mu=\lambda$ into
$\lambda$ in~\eqref{eq:dual-LP-form-Q-learning}, it is reduced
to~\eqref{eq:dual-LP}. Therefore, $\tilde \lambda^* = \lambda^*$,
where $\lambda^*$ is the solution of~\eqref{eq:dual-LP}. Since
$\lambda^*$ is unique by~\cref{lemma:uniqueness-of-dual-LP-DP}, so
is $(\tilde\lambda^*,\tilde\mu^*)=(\lambda^*,\lambda^*)$ as well. This
completes the proof.
\end{proof}

\subsection{Modified LP formulation of dynamic programming with Q-function}
In this subsection, in order to develop a model-free algorithm based on samples from arbitrary state-action distributions to solve the saddle point problem~\eqref{eq:saddle-point-1}, we
introduce another modified but equivalent LP form
\begin{align}
&p^*_Q := \min_{V,Q} \eta^T
V\quad {\rm s.t.}\quad Q \le ({\bf 1}_{|{\cal A}|} \otimes I_{|{\cal
S}|})V,\quad \alpha MPV + MR = MQ,\label{eq:LP-form-Q-learning2}
\end{align}
where
\begin{align*}
&M:= \begin{bmatrix}
   M_1 & & \\
    & \ddots & \\
    & & M_{|{\cal A}|}\\
\end{bmatrix}.
\end{align*}
$M_a,a\in {\cal A}$, is a diagonal matrix with strictly positive elements.
The diagonal elements of $M_a$ is the state distribution when the action $a\in {\cal A}$ is taken. Since $M$ is nonsingular, the above LP has the same solutions as those in~\eqref{eq:LP-form-Q-learning}.
\begin{proposition}\label{proposition:primal-solution-identical}
The optimal solution of~\eqref{eq:LP-form-Q-learning} is identical
to that of~\eqref{eq:LP-form-Q-learning2}.
\end{proposition}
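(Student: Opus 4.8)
The plan is to show that \eqref{eq:LP-form-Q-learning} and \eqref{eq:LP-form-Q-learning2} are literally the \emph{same} optimization problem — identical objective and identical feasible region — from which the claim follows immediately. The objective $\eta^T V$ is unchanged between the two formulations, and so is the inequality constraint $Q \le ({\bf 1}_{|{\cal A}|} \otimes I_{|{\cal S}|})V$. Hence the only thing that needs verification is that the two equality constraints, namely $\alpha PV + R = Q$ in \eqref{eq:LP-form-Q-learning} and $\alpha MPV + MR = MQ$ in \eqref{eq:LP-form-Q-learning2}, cut out exactly the same set of pairs $(V,Q)$.

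First I would rewrite the second equality constraint by factoring out $M$, writing $\alpha MPV + MR - MQ = M(\alpha PV + R - Q)$, so that the constraint reads $M(\alpha PV + R - Q) = 0$. Next I would observe that $M$ is nonsingular: it is block diagonal with blocks $M_a$, $a \in {\cal A}$, each a diagonal matrix with strictly positive diagonal entries, so $\det M = \prod_{a \in {\cal A}} \det M_a > 0$. Consequently $M(\alpha PV + R - Q) = 0$ holds if and only if $\alpha PV + R - Q = 0$, which is precisely the equality constraint of \eqref{eq:LP-form-Q-learning}. This shows the two feasible sets coincide.

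With identical objective and identical feasible region, the two LPs share the same optimal value $p_Q^*$ and, more to the point, the same set of optimizers; invoking the uniqueness of $(Q^*,V^*)$ established in \cref{lemma:equivalence}, that single optimal solution is common to both formulations. There is no substantive obstacle here: the entire content of the proposition is the invertibility of $M$, and the only point requiring care is to note that the strict positivity of the diagonal elements of each $M_a$ is exactly what guarantees the nonsingularity of the full block-diagonal matrix $M$, and hence the equivalence of the two equality constraints.
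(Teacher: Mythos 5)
Your proof is correct and takes essentially the same route as the paper: both arguments rest on the nonsingularity of $M$ forcing the feasible sets of~\eqref{eq:LP-form-Q-learning} and~\eqref{eq:LP-form-Q-learning2} to coincide, whence identical objectives give identical solution sets. If anything, your version is the cleaner one — the paper additionally runs a redundant two-sided feasibility argument to compare optimal values before invoking the same ``same feasible set, same objective'' conclusion, whereas you justify the key step (that $M(\alpha PV + R - Q)=0$ iff $\alpha PV + R - Q = 0$) explicitly.
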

\begin{proof}
Let $(Q_a^*,V^*)_{a\in {\cal A}}$ and $(\tilde Q_a^*,\tilde
V^*)_{a\in {\cal A}}$ be the optimal solution of the
LPs~\eqref{eq:LP-form-Q-learning}
and~\eqref{eq:LP-form-Q-learning2}, respectively. Multiplying
$\alpha PV^* + R\le Q^*$ from the left by $M$, we have $\alpha
MPV^* + MR \le MQ^*$. Therefore, $(Q_a^*,V^*)_{a\in {\cal A}}$ is
a feasible solution of~\eqref{eq:LP-form-Q-learning2}, and $\eta^T
V^*\le\eta^T \tilde V^*$. Similarly, one can prove the converse
$\eta^T V^*\ge\eta^T \tilde V^*$, and thus, $\eta^T
V^*=\eta^T\tilde V^*$. In addition, the feasible set
of~\eqref{eq:LP-form-Q-learning} is identical to the feasible set
of~\eqref{eq:LP-form-Q-learning2}. Having the same objectives and
feasible sets, LPs~\eqref{eq:LP-form-Q-learning}
and~\eqref{eq:LP-form-Q-learning2} have the identical solution
set. This completes the proof.
\end{proof}

To study its dual problem, introduce the Lagrangian multipliers, $\lambda:=[\lambda_1^T,\cdots,\lambda_{|{\cal A}|}^T]^T$, for the inequality constraints, $\mu:=[\mu_1^T,\cdots,\mu_{|{\cal A}|}^T]^T$, for the equality constraints, and consider the Lagrangian function
\begin{align*}
&L_M(Q,V,\lambda,\mu ) = \eta^T V + \mu^T M(\alpha PV + R - Q) +
\lambda^T (Q - ({\bf{1}}_{|{\cal A}|}  \otimes I_{|{\cal S}|} )V).
\end{align*}
Note that when setting $M = I_{|{\cal S}||{\cal A}|}$ in $L_M$, denoted by $L_I$, it reduces to the Lagrangian
function~\eqref{eq:Lagrangian-LP-form-Q-learning}
for~\eqref{eq:LP-form-Q-learning}, i.e., $L_I(Q,V,\lambda ,\mu ) =
\eta^T V + \mu^T (\alpha PV + R - Q) + \lambda^T (Q -
({\bf{1}}_{|{\cal A}|} \otimes I_{|{\cal S}|} )V)$. Then, the
optimal solution can be obtained by solving the saddle point
problem
\begin{align}
&\min_{(V,\,Q) \in {\mathbb R}^{|{\cal S}|}\times {\mathbb R}^{|{\cal S}||{\cal A}|}} \max_{(\lambda,\,\mu) \in {\mathbb R}_+^{|{\cal S}||{\cal A}|}\times {\mathbb R}^{|{\cal S}||{\cal A}|}} L_M(Q,V,\lambda,\mu)\nonumber\\
&=\max_{(\lambda,\,\mu)\in {\mathbb R}_+^{|{\cal S}||{\cal A}|}\times {\mathbb R}^{|{\cal S}||{\cal A}|}} \min_{(V,\,Q) \in {\mathbb R}^{|{\cal S}|}\times {\mathbb R}^{|{\cal S}||{\cal A}|}}
L_M(Q,V,\lambda,\mu).\label{eq:saddle-point-2}
\end{align}

According to~\citet[Prop.~2.6.1,pp.~132]{bertsekas2003convex}, there exists a saddle point $(V^*,Q^*,\lambda^*,\mu^*)$ satisfying
\begin{align*}
&L_M(V^*,Q^*,\lambda,\mu)\le L_M(V,Q,\lambda,\mu)\le L_M(V,Q,\lambda^*,\mu^*),\quad\forall (V,Q,\lambda,\mu) \in {\cal X} \times {\cal Y}
\end{align*}
with ${\cal X}= {\mathbb R}^{|{\cal S}|}\times {\mathbb R}^{|{\cal S}||{\cal A}|}$ and ${\cal Y} = {\mathbb R}_+^{|{\cal S}||{\cal A}|} \times {\mathbb R}^{|{\cal S}||{\cal A}|}$. In addition, the corresponding dual problem is
\begin{align}
&d_Q^*= \max_{(\lambda,\mu) \in {\mathbb R}_+^{|{\cal S}||{\cal A}|}\times {\mathbb R}^{|{\cal S}||{\cal A}|}}\quad \mu^T MR\quad {\rm s.t.}\quad\eta- ({\bf 1}_{|{\cal A}|} \otimes I_{|{\cal S}|})^T\lambda
+\alpha P^T M\mu= 0,\quad M\mu= \lambda.\label{eq:dual-LP-form-Q-learning2}
\end{align}

\cref{proposition:primal-solution-identical} suggests that the
primal optimal solutions of~\eqref{eq:LP-form-Q-learning}
and~\eqref{eq:LP-form-Q-learning2} are identical. However, it may not be the case for the
dual optimal solutions. In the next
proposition, we study expressions of the dual solution.
\begin{proposition}\label{proposition:dual-solution}
Let $(\mu^*,\lambda^*)$ and $(\tilde\mu^*,\tilde\lambda^*)$ be the
optimal solutions of~\eqref{eq:dual-LP-form-Q-learning} and~\eqref{eq:dual-LP-form-Q-learning2}, respectively. Then, $\tilde \mu^*
=M^{-1}\lambda^*$ and $\tilde\lambda^*=\lambda^*$.
\end{proposition}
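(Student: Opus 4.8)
The plan is to reduce the modified dual~\eqref{eq:dual-LP-form-Q-learning2} to the original dual~\eqref{eq:dual-LP-form-Q-learning} (equivalently, to the classical dual LP~\eqref{eq:dual-LP}) via the invertible change of variables dictated by its own equality constraint $M\mu=\lambda$, and then to invoke uniqueness of the latter's solution. Since $M$ is a nonsingular diagonal matrix with strictly positive entries, the map $(\mu,\lambda)\mapsto(M^{-1}\lambda,\lambda)$ is a bijection between the relevant feasible points, and I will use it to transport optimality from one problem to the other.

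First I would eliminate $\mu$ from~\eqref{eq:dual-LP-form-Q-learning2} using $M\mu=\lambda$. Substituting into the objective gives $\mu^T MR=(M\mu)^T R=\lambda^T R$, where I use that $M$ is symmetric. Substituting into the remaining equality constraint $\eta-({\bf 1}_{|{\cal A}|}\otimes I_{|{\cal S}|})^T\lambda+\alpha P^T M\mu=0$ yields $\eta-({\bf 1}_{|{\cal A}|}\otimes I_{|{\cal S}|})^T\lambda+\alpha P^T\lambda=0$, with the sign constraint $\lambda\ge 0$ (and $\mu=M^{-1}\lambda$ is then automatically nonnegative since $M$ has a strictly positive diagonal). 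Thus, in the single variable $\lambda$, problem~\eqref{eq:dual-LP-form-Q-learning2} becomes exactly the reduced form of~\eqref{eq:dual-LP-form-Q-learning} obtained by enforcing its own constraint $\mu=\lambda$, namely the dual LP~\eqref{eq:dual-LP}.

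Next I would verify that this is a genuine bijection of feasible sets: every feasible $(\mu,\lambda)$ of~\eqref{eq:dual-LP-form-Q-learning2} satisfies $\lambda=M\mu$ and hence is determined by $\lambda$ alone, while conversely every $\lambda\ge 0$ satisfying $\eta-({\bf 1}_{|{\cal A}|}\otimes I_{|{\cal S}|})^T\lambda+\alpha P^T\lambda=0$ lifts to the feasible pair $(M^{-1}\lambda,\lambda)$. Because the objective is preserved under this correspondence, optimal solutions map to optimal solutions. Consequently $\tilde\lambda^*$ must equal the unique optimal $\lambda^*$ of~\eqref{eq:dual-LP}, which by the preceding lemma coincides with the $\lambda$-component of the optimal solution of~\eqref{eq:dual-LP-form-Q-learning}; the constraint $M\mu=\lambda$ then forces $\tilde\mu^*=M^{-1}\tilde\lambda^*=M^{-1}\lambda^*$. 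Uniqueness of $\tilde\lambda^*$ (from~\cref{lemma:uniqueness-of-dual-LP-DP}) together with $\tilde\mu^*=M^{-1}\tilde\lambda^*$ yields uniqueness of the pair.

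The main obstacle is largely bookkeeping rather than conceptual: one must keep the two layers of $\mu$–$\lambda$ identification straight, since~\eqref{eq:dual-LP-form-Q-learning} already forces $\mu^*=\lambda^*$ through its constraint $\mu=\lambda$, so the target $M^{-1}\lambda^*$ could equally be written $M^{-1}\mu^*$. I would make sure the statement is phrased consistently with that normalization. Everything else is the routine verification that the affine substitution $\lambda=M\mu$ preserves feasibility and objective value, which is immediate from the diagonality and invertibility of $M$.
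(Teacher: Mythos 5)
Your proof is correct, but it takes a genuinely different route from the paper's. You eliminate $\mu$ from~\eqref{eq:dual-LP-form-Q-learning2} using its own constraint $M\mu=\lambda$, observe that the objective becomes $\mu^T M R=\lambda^T R$ and the remaining constraint becomes $\eta-({\bf 1}_{|{\cal A}|}\otimes I_{|{\cal S}|})^T\lambda+\alpha P^T\lambda=0$, i.e., the reduced problem is exactly the dual LP~\eqref{eq:dual-LP}, and then transport optimality through the resulting bijection of feasible sets and invoke uniqueness from \cref{lemma:uniqueness-of-dual-LP-DP}. This is in fact the same elimination technique the paper uses for the \emph{preceding} unnamed lemma (the one identifying the solution of~\eqref{eq:dual-LP-form-Q-learning} as $(\lambda^*,\lambda^*)$), but for \cref{proposition:dual-solution} itself the paper argues instead at the level of Lagrangians: it shows that $(Q^*,V^*,\lambda^*,M^{-1}\mu^*)$ is a saddle point of $L_M$, using the identity $L_M(Q,V,\lambda,M^{-1}\mu)=L_I(Q,V,\lambda,\mu)$ to transfer the saddle-point inequalities from $L_I$ to $L_M$, and then reads off the dual optimum via \cref{proposition:saddle-point-relation}. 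Your approach buys elementarity and completeness: it stays entirely within LP feasibility and objective values, yields uniqueness of the pair $(\tilde\lambda^*,\tilde\mu^*)$ as a byproduct, and avoids the saddle-point machinery altogether (the paper's write-up, by contrast, spells out only the dual-side saddle inequality and leaves the primal-side inequality implicit). The paper's approach buys structural insight that is better aligned with the rest of the development: it exhibits explicitly how the saddle points of~\eqref{eq:saddle-point-1} and~\eqref{eq:saddle-point-2} correspond under the map $\mu\mapsto M^{-1}\mu$, which is the relationship the SPD Q-learning algorithm actually exploits. Your closing bookkeeping remark is also on point: since the earlier lemma gives $\mu^*=\lambda^*$ for~\eqref{eq:dual-LP-form-Q-learning}, writing the target as $M^{-1}\lambda^*$ or $M^{-1}\mu^*$ is immaterial, and your chain $\tilde\mu^*=M^{-1}\tilde\lambda^*=M^{-1}\lambda^*$ closes the argument cleanly.
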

\begin{proof}
Let $(V^*,Q^*)$ and $(\mu^*,\lambda^*)$ be the optimal solutions of
the primal problem~\eqref{eq:LP-form-Q-learning} and dual
problem~\eqref{eq:dual-LP-form-Q-learning}, respectively. Then,
they are the solution of the saddle point
problem~\eqref{eq:saddle-point-1}. Similarly, if $(\tilde
V^*,\tilde Q^*)$ and $(\tilde\mu^*,\tilde\lambda^*)$ are the optimal
solutions of the primal problem~\eqref{eq:LP-form-Q-learning2} and
dual problem~\eqref{eq:dual-LP-form-Q-learning2}, respectively,
then they are the solution of the saddle point
problem~\eqref{eq:saddle-point-2}. We will prove that
$(Q^*,V^*,\lambda^*,M^{-1} \mu^*)$ is a saddle point of
$L_M(\cdot,\cdot,\cdot,\cdot)$. Since $(Q^*,V^*,\lambda^*,\mu^*)$
is a saddle point of $L_I(\cdot,\cdot,\cdot,\cdot)$, we have
\begin{align*}
&L_I(Q^*,V^*,\lambda^*,\mu^*)\le L_I(Q^*,V^*,\lambda,\mu),\quad
\forall \lambda \in {\mathbb R}_+^{|{\cal S}||{\cal A}|},\mu\in {\mathbb R}^{|{\cal S}||{\cal A}|},
\end{align*}
and equivalently, $L_M (Q^*,V^*,\lambda^*,M^{-1}\mu^*)\le
L_M(Q^*,V^*,\lambda ,M^{-1}\mu)$ for all $\lambda$ and $\mu$.
Since $M$ is nonsingular, this is equivalent to $L_M
(Q^*,V^*,\lambda^*,M^{-1}\mu^*)\le L_M
(Q^*,V^*,\lambda,\mu),\forall\lambda,\mu$, and by the definition
of the saddle point, one concludes that $(Q^*,V^*,\lambda^*,M^{-1}
\mu^*)$ is a saddle point
of~\eqref{eq:saddle-point-2}. Therefore, $(\lambda^*,M^{-1}\mu^*)$ is the dual optimal solution of~\eqref{eq:dual-LP-form-Q-learning2}, i.e., $\tilde \mu^*
=M^{-1}\lambda^*$ and $\tilde\lambda^*=\lambda^*$. This completes the proof.
\end{proof}

\cref{proposition:dual-solution} suggests that the optimal dual solution for $\lambda$ is identical to that of the original LP~\eqref{eq:DP-LP-form}. Therefore, the optimal policy can be recovered from the dual solution as well as the primal solution. Based on this observation, we can also establish bounds for the solutions to the modified saddle point problem. Note that such bounds allow us to restrict the saddle point problem to compact domains, which is essential for analyzing the convergence of primal-dual type algorithms. For this aim, we find bounds on the solutions in the next lemma.
\begin{lemma}\label{lemma:bound-of-solutions}
Let $(Q^*,V^*)$ and $(\lambda^*,\mu^*)$ be the optimal primal and
dual solutions solving~\eqref{eq:LP-form-Q-learning2}
and~\eqref{eq:dual-LP-form-Q-learning2}, respectively. In addition, let
$\zeta>0$ be a real number less than or equal to any diagonal
element of $M$. Then, we have
\begin{enumerate}
\item $\| Q_a^* \|_\infty\le \|V^*\|_\infty \le
\frac{\sigma}{1-\alpha},\quad \forall a \in {\cal A}$

\item $\|Q_a^*\|_2\le \| V^* \|_2 \le \frac{\sqrt{|{\cal S}|} \sigma
}{1-\alpha},\quad \forall a \in {\cal A}$

\item $\| \lambda^* \|_\infty\le \|\lambda^*\|_2 \le
\|\lambda^*\|_1 \le \frac{\| \eta \|_1}{1-\alpha}$.

\item $\|\mu^*\|_\infty\le \|\mu^*\|_2 \le \|\mu^*\|_1 \le
\frac{\| \eta \|_1}{\zeta(1-\alpha)}$.
\end{enumerate}
\end{lemma}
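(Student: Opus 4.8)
The plan is to reduce every bound to quantities already controlled in \cref{lemma:bound-lemma}, using the two identification results \cref{proposition:primal-solution-identical} and \cref{proposition:dual-solution} to transfer information from the original LP to the modified one. Once the relevant optimal variables are recognized as (rescalings of) the variables of the original primal and dual LPs, all four items follow from the earlier bounds together with elementary norm inequalities.

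First I would handle the primal bounds (items 1 and 2). By \cref{proposition:primal-solution-identical} the primal optimal $V^*$ of~\eqref{eq:LP-form-Q-learning2} coincides with that of~\eqref{eq:LP-form-Q-learning}, which by \cref{lemma:equivalence} is exactly the $V^*$ of the original LP~\eqref{eq:DP-LP-form}; hence $\|V^*\|_\infty \le \sigma/(1-\alpha)$ and $\|V^*\|_2 \le \sqrt{|\mathcal{S}|}\sigma/(1-\alpha)$ are immediate from \cref{lemma:bound-lemma}. For $Q_a^*$, I would observe that feasibility of $(Q^*,V^*)$ forces $Q_a^* \le V^*$ componentwise, while the representation $Q_a^* = \alpha P_a V^* + R_a$ (from \cref{lemma:equivalence}) with $P_a$ row-stochastic, $V^* \ge 0$, and $R_a \ge 0$ forces $Q_a^* \ge 0$. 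Thus $0 \le Q_a^*(s) \le V^*(s)$ for every $s$, and by monotonicity of the $\infty$- and $2$-norms on nonnegative vectors this yields $\|Q_a^*\|_\infty \le \|V^*\|_\infty$ and $\|Q_a^*\|_2 \le \|V^*\|_2$, completing items 1 and 2.

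Next I would treat the dual bound on $\lambda^*$ (item 3). By \cref{proposition:dual-solution}, the $\lambda$-component of the optimal dual of~\eqref{eq:dual-LP-form-Q-learning2} equals the dual optimal $\lambda^*$ of the original LP~\eqref{eq:DP-LP-form}. Therefore $\|\lambda^*\|_1 \le \|\eta\|_1/(1-\alpha)$ is inherited directly from \cref{lemma:bound-lemma}, and the chain $\|\lambda^*\|_\infty \le \|\lambda^*\|_2 \le \|\lambda^*\|_1$ is the standard ordering of the $\ell_\infty$, $\ell_2$, and $\ell_1$ norms.

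Finally, for $\mu^*$ (item 4) I would use the relation $\mu^* = M^{-1}\lambda^*$ from \cref{proposition:dual-solution}. Since $M$ is diagonal with every diagonal entry at least $\zeta > 0$, the diagonal matrix $M^{-1}$ has entries at most $1/\zeta$, so $|\mu^*(i)| \le |\lambda^*(i)|/\zeta$ for each $i$; summing over $i$ gives $\|\mu^*\|_1 \le \|\lambda^*\|_1/\zeta \le \|\eta\|_1/(\zeta(1-\alpha))$, and the remaining inequalities $\|\mu^*\|_\infty \le \|\mu^*\|_2 \le \|\mu^*\|_1$ follow as before. I do not anticipate a genuine obstacle: the only points requiring care are confirming $Q_a^* \ge 0$ so that the componentwise sandwich upgrades to the norm bounds, and correctly propagating the factor $1/\zeta$ through $M^{-1}$; everything else is a direct appeal to the earlier results.
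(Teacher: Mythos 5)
Your proposal is correct and follows essentially the same route as the paper: identify the primal solution with that of the original LP and use the constraint $0 \le Q_a^* \le V^*$ for items 1--2, invoke \cref{proposition:dual-solution} to identify $\lambda^*$ with the original dual solution for item 3, and propagate the factor $\zeta^{-1}$ through $\mu^* = M^{-1}\lambda^*$ for item 4 (the paper does this via the induced norm $\|M^{-1}\|_{1,1}$, you do it entrywise, which is equivalent). Your explicit verification that $Q_a^* \ge 0$ via $Q_a^* = \alpha P_a V^* + R_a$ is a welcome bit of extra care that the paper leaves implicit.
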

\begin{proof}
The constraints in~\eqref{eq:LP-form-Q-learning2} imply $0\le Q_a^*
\le V^* $ for all $a \in {\cal A}$. In combination with this
result, the first and second statements follow
by~\cref{lemma:bound-lemma}. Since $\lambda^*$ is identical to the
optimal dual variable of the original dual
problem~\eqref{eq:dual-LP-form-Q-learning}
by~\cref{proposition:dual-solution}, the third result follows
from~\cref{lemma:bound-lemma}. Let $\tilde\lambda^*$ be the
optimal solution of the dual
problem~\eqref{eq:dual-LP-form-Q-learning}.
By~\cref{proposition:dual-solution}, we prove $\|\mu^*\|_2\le
\|\mu^*\|_1 = \| M^{-1}\tilde \lambda^*\|_1\le \|M^{-1}
\|_{1,1} \| \tilde\lambda^* \|_1$, where $\|\cdot
\|_{1,1}$ is the induced matrix norm associated with the vector 1-norm. Using~\cref{lemma:bound-lemma}, one
obtains
\begin{align*}
&\|\mu^*\|_2\le \|\mu^*\|_\infty \le \|\mu^*\|_1 = \|M^{-1}
\tilde\lambda^*\|_1 \le \|M^{-1}\|_{1,1} \|\tilde\lambda^*\|_1
\le \|M^{-1}\|_{1,1}\frac{\|\eta\|_1}{1-\alpha} \le \frac{\|\eta
\|_1}{\zeta (1-\alpha)},
\end{align*}
where the inequalities $\|\cdot \|_\infty\le \|\cdot\|_2
\le \|\cdot \|_1$ are used. This completes
the proof.
\end{proof}

\subsection{SPD Q-learning algorithm}

In this subsection, we develop a stochastic primal-dual Q-learning algorithm that solves the saddle point problem~\eqref{eq:saddle-point-2} in the previous subsection. Based on~\cref{lemma:bound-of-solutions}, define the compact convex sets
\begin{align*}
&{\cal V}:= \left\{ v \in {\mathbb R}^{|{\cal S}|}:v \ge
0,\|v\|_\infty\le \frac{\sigma}{1-\alpha} \right\},\quad {\cal L}:
= \left\{ \lambda\in {\mathbb R}^{|{\cal S}||{\cal A}|}:\lambda \ge
0,\| \lambda \|_\infty \le \frac{\| \eta \|_1}{1-\alpha}
\right\},\\
&{\cal M}:= \left\{ \mu\in {\mathbb R}^{|{\cal S}||{\cal A}|} :\mu
\ge 0,\|\mu\|_\infty \le \frac{\| \eta \|_1}{\zeta (1-\alpha)}
\right\},\quad \Xi:=\left\{ \lambda \in {\mathbb R}^{|{\cal
S}||{\cal A}|}:\sum_{a\in {\cal A}}{\lambda_a}\ge \eta \right\},
\end{align*}
which satisfy $V^*\in {\cal V},Q_a^*\in {\cal V}$ for all $a\in
{\cal A}$, $\lambda^*\in {\cal L} $, and $\mu^*\in {\cal M}$, where $\zeta>0$ is a real number less than or equal to any diagonal element of $M$. The construction of the compact sets is similar to~\citet*{chen2016stochastic}, so we omit the details here for brevity. Interested readers are referred to~\citet*{chen2016stochastic} for details. Then, the domain of each variable of the saddle point problem in~\eqref{eq:saddle-point-2} can be confined into a smaller compact set as follows:
\begin{align}
&\min_{(V,\,Q)\in {\cal V}\times {\cal V}^{|{\cal A}|}}\max_{(\lambda,\,\mu)\in ({\cal L} \cap \Xi)\times {\cal M}} L_M(Q,V,\lambda,\mu)=
\max_{(\lambda,\,\mu)\in ({\cal L} \cap \Xi)\times {\cal M}}\min_{(V,\,Q)\in {\cal V}\times {\cal V}^{|{\cal A}|}} L_M(Q,V,\lambda,\mu).\label{eq:saddle-point-3}
\end{align}
Note that solutions of~\eqref{eq:saddle-point-3}
and~\eqref{eq:saddle-point-2} are identical. The Markov decision problem now reduces to solving~\eqref{eq:saddle-point-3}. If the discounted MDP model is known, then it can be solved by using the (deterministic) primal-dual algorithm~\citep{nedic2009subgradient}
\begin{align*}
&Q_{k+1}=\Pi_{{\cal V}^{|{\cal A}|}} [Q_k-\gamma_k \nabla_{Q} L_{M}(Q_k,V_k,\lambda_k,\mu_k)],\\
&V_{k+1}=\Pi_{\cal V} [V_k-\gamma_k\nabla_V L_{M}(Q_k,V_k,\lambda_k,\mu_k)],\\
&\lambda_{k+1}=\Pi_{{\cal L} \cap \Xi} [\lambda_k+\gamma_k\nabla_\lambda L_{M}(Q_k,V_k,\lambda_k,\mu_k)],\\
&\mu_{k+1}=\Pi_{\cal M} [\mu_k+\gamma_k\nabla_\mu
L_{M}(Q_k,V_k,\lambda_k,\mu_k)],
\end{align*}
where the gradients of $L_{M}$ with respect to the primal
variables, $Q,V$, and the dual variables, $\lambda,\mu$, are
\begin{align*}
&\nabla_Q L_M (Q,V,\lambda,\mu)=\lambda-M\mu= N|{\cal S}||{\cal A}|\lambda  - M\mu,\\
&\nabla_V L_M (Q,V,\lambda,\mu) = \eta  - ({\bf 1}_{|{\cal A}|}
\otimes I_{|{\cal S}|})^T \lambda+ \alpha P^T M\mu\nonumber\\
&\quad\quad\quad\quad\quad\quad\quad\quad=H|{\cal S}|\eta - ({\bf 1}_{|{\cal A}|}\otimes I_{|{\cal S}|})^T
N|{\cal S}||{\cal A}|\lambda  + \alpha P^T M\mu,\\
&\nabla_\lambda L_M(Q,V,\lambda,\mu)=Q - ({\bf 1}_{|{\cal A}|}
\otimes I_{|{\cal S}|})V = N|{\cal S}||{\cal A}|Q-
N|{\cal S}||{\cal A}|({\bf 1}_{|{\cal A}|} \otimes I_{|{\cal S}|})V\\
&\nabla_\mu L_M(Q,V,\lambda,\mu) =\alpha MPV + MR - MQ,
\end{align*}
where
\begin{align*}
&N:= \begin{bmatrix}
   \frac{1}{|{\cal A}|}H & & \\
    &  \ddots  &  \\
    &  & \frac{1}{|{\cal A}|}H  \\
\end{bmatrix},\quad H:= \begin{bmatrix}
   \frac{1}{|{\cal S}|} & & \\
    &  \ddots  &  \\
    & & \frac{1}{|{\cal S}|} \\
\end{bmatrix}.
\end{align*}
We introduce the matrices, $N,H$, to radomize the gradients, $\nabla_Q L_M$, $\nabla_\mu L_M$, and reduce the computational complexity per each iteration. Assume that the discounted MDP model is unknown, but the trajectory, $(s_k,a_k)_{k=0}^\infty$, can be observed in real-time. Then, noisy estimates of partial derivatives of the Lagrangian function can be obtained from samples of state-action transitions. We can therefore apply the stochastic primal-dual algorithm to solve~\eqref{eq:saddle-point-3}.

Although it is common in RL literature to assume a stationary distribution, i.e., $M$ is constant, the algorithm can also handle the case that $M$ is time-varying. In particular, let $\tau_{a,k}(s),s\in {\cal S},a \in {\cal A}$, be the probability
that the current state and action are $(s,a)$ at time $k$,
respectively, and define the corresponding vectors and matrices
\begin{align}
&\tau_{a,k}:=\begin{bmatrix}
\tau_{a,k}(1)\\
\vdots\\
\tau_{a,k}(|{\cal S}|)\\
\end{bmatrix}\in {\mathbb R}^{|{\cal S}|},\quad M_{a,k}:= \begin{bmatrix}
   \tau_{a,k}(1) & & \\
    & \ddots  & \\
    & & \tau_{a,k}(|{\cal S}|)\\
\end{bmatrix} \in {\mathbb R}^{|{\cal S}|\times |{\cal S}|},\nonumber\\
& M_k:= \begin{bmatrix}
   M_{1,k} & & \\
    & \ddots & \\
    & & M_{|{\cal A}|,k}\\
\end{bmatrix} \in {\mathbb R}^{|{\cal S}||{\cal A}| \times |{\cal S}||{\cal
A}|}.\label{eq:M-matrix-def}
\end{align}
The diagonal elements of $M_k$ represent the probability measure on
the state-action space $(s,a)\in {\cal S} \times {\cal A}$ at time
$k$. One can think of $(M_k)_{k=0}^{\infty}$ as a deterministic
infinite sequence of matrices given a priori. To proceed further, we adopt the
following assumptions.
\begin{assumption}\label{assump:time-varying-distribution}
Throughout the paper, we assume that there exists a real number
$\zeta>0$ such that $\tau_{a,k}(s)\ge\zeta$ for all $k \ge 0$ and
$(s,a) \in {\cal S} \times {\cal A}$. Moreover, there exists a
positive diagonal matrix $M_\infty$ such that $\lim_{k\to \infty}
M_k=M_\infty$.
\end{assumption}
\begin{example}\label{ex:ex1}
\cref{assump:time-varying-distribution} includes the case that the
behavior policy $\theta$ is fixed, but the state distribution at time
$k$ did not reach a stationary distribution. Another case is the
behavior policy itself is time-varying. In particular, consider
any stochastic policy $\theta_s \in \Delta_{|{\cal A}|}$ such that
$\theta_s(a)>0,\forall s \in {\cal S},a \in {\cal A}$ and the
corresponding state-to-state transition matrix $P_\beta$. Assume that the initial state
distribution $v_0\in \Delta_{|{\cal S}|}$ at time $k=0$ is
given. Then, the state distribution at time $k$ is
$v_k=(P_\beta^T)^k v_0$. In addition, assume that the MDP is ergodic
under $\beta$, i.e., there exists a stationary distribution
$v_\infty \in \Delta_{|{\cal S}|}$ such that $\lim_{k\to\infty} v_k=
\lim_{k\to\infty} (P_\beta^T)^k v_0= v_\infty$ and each element of
the stationary distribution vector $v_\infty$ is positive (e.g.,
the Markov chain is irreducible and aperiodic~\citep[Theorem
2.13.2.]{resnick2013adventures}). Then, the state-action
distribution at time $k$ is $\tau_{a,k}(s)=v_k(s)\theta_a(s),s \in
{\cal S},a\in {\cal A}$ and its stationary distribution is
$\tau_{a,\infty}(s) = v_\infty(s)\theta_a(s),s \in {\cal S},a \in
{\cal A}$. This distribution and the corresponding matrix $M_k$
satisfy~\cref{assump:time-varying-distribution}.
\end{example}

Replacing $M$ with $M_k$ in $L_M$, the corresponding Lagrangian
function $L_{M_k}$ is given by
\begin{align*}
&L_{M_k}(Q,V,\lambda,\mu)=\eta^T V + \mu^T M_k(\alpha PV + R - Q)
+ \lambda^T (Q - ({\bf 1}_{|{\cal A}|} \otimes I_{|{\cal S}|})V),
\end{align*}
where $M_k$ is changing for all $k \geq 0$. The corresponding primal-dual algorithm can be modified as follows:
\begin{align}
&Q_{k+1}=\Pi_{{\cal V}^{|{\cal A}|}}[Q_k-\gamma_k \nabla_{Q}
L_{M_k}(Q_k,V_k,\lambda_k,\mu_k)],\nonumber\\
&V_{k+1}=\Pi_{\cal V} [V_k-\gamma_k \nabla_V L_{M_k} (Q_k,V_k,\lambda_k,\mu_k)],\nonumber\\
&\lambda_{k+1}=\Pi_{{\cal L}\cap\Xi} [\lambda_k+\gamma_k\nabla_\lambda L_{M_k}(Q_k,V_k,\lambda_k,\mu_k)],\nonumber\\
&\mu_{k+1}=\Pi_{\cal M} [\mu_k +\gamma_k \nabla_\mu
L_{M_k}(Q_k,V_k,\lambda_k,\mu_k)].\label{eq:primal-dual-iteration}
\end{align}
Using the distributions in $M_k$, the gradients
in~\eqref{eq:primal-dual-iteration} can be replaced with their
stochastic estimations. The corresponding algorithm is
summarized in~\cref{algo:primal-dual-method}. Since $M_k$ is time-varying, it is not clear whether or not the iterates converge, and if yes, how fast the convergence speed is and to
which solution they converge. In the next section, we provide
answers to these questions. Finally, we close this section by formally introducing two ways to obtain a suboptimal policy from~\cref{algo:primal-dual-method}.
\begin{definition}[Primal and dual policies]\label{def:primal-dual-policy}
The {\em primal policy} associated with~\cref{algo:primal-dual-method} is defined as the deterministic policy
\begin{align}
&\hat\pi_T^p(s):={\rm argmax}_{a\in {\cal A}} \hat Q_{a,T}(s)\label{eq:primal-policy}
\end{align}
recovered from the primal variable $\hat Q_{a,T}$. The {\em dual policy} associated with~\cref{algo:primal-dual-method} is defined as the stochastic policy
\begin{align}
&\hat\pi_T^d(s):= \begin{bmatrix}
\frac{\hat\lambda_{1,T}^T e_s}{\sum_{a'\in {\cal A}}{\hat\lambda_{a',T}^T e_s}} & \cdots & \frac{\hat\lambda_{|{\cal A}|,T}^T e_s}{\sum_{a'\in {\cal A}}{\hat\lambda_{a',T}^Te_s}}\\
\end{bmatrix}^T\in \Delta_{|{\cal A}|}\label{eq:dual-policy}
\end{align}
recovered from the dual variable $\hat\lambda_T$.
\end{definition}

\begin{algorithm}[h!]
\caption{SPD Q-learning algorithm}
\begin{algorithmic}[1]
\State Initialize $V^{(0)}:{\cal S}\to
\left[0,\frac{\sigma}{1-\alpha}\right],Q^{(0)}:{\cal S} \times
{\cal A} \to \left[0,\frac{\sigma}{1-\alpha} \right],\lambda^{(0)}
:{\cal S}\times {\cal A}\to \left[0,\frac{\| \eta \|_1
}{(1-\alpha)} \right]$, $\mu^{(0)}:{\cal S}\times {\cal A} \to
\left[ 0,\frac{\| \eta \|_1}{\zeta (1-\alpha)} \right]$.

\For{$k =0,1,\ldots,T-1$}

\State Observe $(s_k,a_k,s_{k+1},\hat r_{s_k s_{k+1} a_k})$ from
the environment, where $(s_k,a_k) \sim \tau_k$.

\State Uniformly sample $\hat s_k \sim U(S),\hat a_k\sim U(A)$.

\State Update the primal iterates by
\begin{align*}
Q_{a_k}^{(k+1/4)}=& Q_{a_k}^{(k)}-\gamma_k [-e_{s_k} e_{s_k}^T \mu_{a_k}^{(k)}],\\
Q_{\hat a_k}^{(k+1/2)}=& Q_{\hat a_k}^{(k+1/4)}-\gamma_k [|{\cal
S}||{\cal A}|e_{\hat s_k} e_{\hat s_k}^T \lambda_{\hat a_k}^{(k)}],\\
Q_a^{(k+1/2)}=& Q_a^{(k)},\quad a \in {\cal A}\backslash \{a_k,\hat a_k\},\\
V^{(k+1/2)}=&V^{(k)}-\gamma_k [e_{\hat s_k} e_{\hat s_k}^T |{\cal
S}|\eta - e_{\hat s_k} e_{\hat s_k}^T |{\cal S}||{\cal
A}|\lambda_{\hat a_k}^{(k)}+ \alpha e_{s_{k+1}} e_{s_k}^T \mu_{a_k}^{(k)}].
\end{align*}

\State Update the dual iterates by
\begin{align*}
\lambda_{\hat a_k}^{(k+1/2)}=& \lambda_{\hat a_k}^{(k)} + \gamma_k
[|{\cal S}||{\cal A}| e_{\hat s_k} e_{\hat s_k}^T Q_{\hat a_k}^{(k)}- |{\cal S}||{\cal A}|e_{\hat s_k} e_{\hat s_k }^T V^{(k)}],\\
\lambda_a^{(k+1/2)}=& \lambda_a^{(k)},\quad a \in {\cal
A}\backslash \{\hat a_k \},\\
\mu_{a_k}^{(k+1/2)}=& \mu_{a_k}^{(k)}+\gamma_k [\alpha e_{s_k}
e_{s_{k+1}}^T V^{(k)} + e_{s_k}\hat r_{s_k s_{k+1} a_k}- e_{s_k} e_{s_k}^T Q_{a_k}^{(k)}],\\
\mu_a^{(k+1/2)}=&\mu_a^{(k)},\quad a \in {\cal A}\backslash \{a_k\}.
\end{align*}

\State Project the iterates onto the convex sets
\begin{align*}
&V^{(k+1)}= \Pi_{\cal V} (V^{(k+1/2)}),\quad
Q_{a_k}^{(k+1)}=\Pi_{\cal V} (Q_{a_k}^{(k+1/2)}),\\
&Q_{\hat a_k}^{(k+1)}=\Pi_{\cal V} (Q_{\hat a_k}^{(k+1/2)}),\quad \lambda^{(k+1)}=\Pi_{{\cal L}\cap \Xi} (\lambda^{(k+1/2)}),\\
&\mu^{(k+1)}=\Pi_{\cal M}(\mu^{(k+1/2)}).
\end{align*}

\EndFor

\State {\bf Output}: Averaged iterates $\hat Q_T=
\frac{1}{T}\sum_{k=0}^{T-1}{Q^{(k)}}$, $\hat V_T=
\frac{1}{T}\sum_{k=0}^{T-1}{V^{(k)}}$, and $\hat\lambda_T =
\frac{1}{T}\sum_{k=0}^{T-1}{\lambda^{(k)}}$.
\end{algorithmic}
\label{algo:primal-dual-method}
\end{algorithm}

\section{Main Result}\label{section:main-result}
In this section, we summarize main results of this paper, including the convergence
of~\cref{algo:primal-dual-method}. To achieve this goal, basic assumptions are summarized below.
\begin{assumption}
The step-size sequence $(\gamma_k)_{k=0}^\infty$ is
non-increasing, and $\lim_{k\to\infty}\gamma_k=0$.
\end{assumption}
\begin{assumption}\label{assump:distribution-convergence}
There exists a non-increasing sequence $(\beta_k)_{k=0}^\infty$
such that
\begin{align*}
&\| M_k^{-1}-M_{k+1}^{-1}\|_2 \le \beta_k,\quad\forall k\ge 0
\end{align*}
and $\lim_{k\to\infty}\beta_k=0$.
\end{assumption}

We first introduce an example to prove the validity of~\cref{assump:distribution-convergence}.
\begin{example}\label{ex:ex2}
We prove that~\cref{ex:ex1}
satisfies~\cref{assump:distribution-convergence}. According to the definition in~\eqref{eq:M-matrix-def}, the
corresponding matrix, $M_k$, has diagonal entries
$\tau_{a,\,k}(s)=v_k(s)\theta_s(a),s \in {\cal S},a \in {\cal
A}$. Then, the diagonal entries of $M_k^{-1}-M_{k+1}^{-1}$ are
$\frac{1}{v_k(s)\theta_s(a)} - \frac{1}{v_{k+1}(s)\theta_s(a)},a \in
{\cal A},s \in {\cal S}$, and
\begin{align*}
&\|M_k^{-1} - M_{k+1}^{-1}\|_2
=\sqrt{\lambda_{\max}((M_k^{-1}-M_{k+1}^{-1})^T(M_k^{-1}-M_{k+1}^{-1}))}\\
&= \sqrt{\max_{a\in {\cal A},s\in {\cal
S}}\left(\frac{1}{v_k(s)\theta_s(a)} -\frac{1}{v_{k+1}(s)\theta_s(a)}\right)^2} = \max_{a\in {\cal A},s \in {\cal S}}\left( \frac{v_{k+1}(s)-v_k(s)}{v_k(s)v_{k+1}(s)\theta_a(s)}\right)\\
&\le\frac{\max_{s\in {\cal S}}(v_{k+1}(s)-v_k(s))} {\min_{a\in
{\cal A},s\in {\cal S}}(v_k(s)v_{k+1}(s)\theta_s(a))}\le
\frac{\max_{s\in {\cal S}}(v_{k+1}(s)-v_k(s))}{\min_{a\in {\cal
A},s \in {\cal S}}(v_k(s)v_{k+1}(s)\theta_s(a)\theta_s(a))}\\
&\le\zeta^{-2}\max_{s\in {\cal S}}(v_{k+1}(s)-v_k(s)),
\end{align*}
which converges to zero as $k\to\infty$. Therefore, one can set
$\beta_k=\zeta^{-2}\max_{s\in {\cal S}}(v_{k+1}(s)-v_k(s))$.
\end{example}

For simplicity, define the vectors and sets
\begin{align*}
&x:=\begin{bmatrix}
   Q\\
   V\\
\end{bmatrix},\quad y: = \begin{bmatrix}
   \lambda \\
   \mu \\
\end{bmatrix},\quad {\cal X}:= {\cal V}^{|{\cal A}|}\times{\cal V},\quad{\cal Y}:=({\cal L}\cap\Xi)\times{\cal
M},
\end{align*}
where $x\in {\cal X}$ and $y \in {\cal Y}$ collect the primal and dual variables,
respectively. Then, the Lagrangian function can be written
compactly by
\begin{align*}
L_{M_k}(Q,V,\lambda,\mu) &=\eta^T V+\mu^T M_k(\alpha PV + R -
Q)+\lambda^T(Q-({\bf 1}_{|{\cal A}|}\otimes I_{|{\cal S}|})V)\\
&= f(x)+x^T A_k y+b_k^T y\\
&=:L_{M_k}(x,y),
\end{align*}
where
\begin{align*}
&f(x):=\eta^T V,\quad A_k=\begin{bmatrix}
   I & -({\bf 1}_{|{\cal A}|}  \otimes I_{|{\cal S}|})  \\
   -M_k & \alpha M_k P\\
\end{bmatrix}^T,\quad b_k= \begin{bmatrix}
 0\\
 M_k R\\
\end{bmatrix}.
\end{align*}
The primal-dual updates in~\eqref{eq:primal-dual-iteration} are
written as
\begin{align}
&x_{k+1}=\Pi_{\cal X}(x_k-\gamma_k(\nabla_x
L_{M_k}(x_k,y_k)+\varepsilon_k)),\label{eq:promai-dual-subgrad1}\\
&y_{k+1}=\Pi_{\cal Y}(y_k+\gamma_k(\nabla_y
L_{M_k}(x_k,y_k)+\xi_k)),\label{eq:promai-dual-subgrad2}
\end{align}
where $\nabla_x L_{M_k}(x,y)$ and $\nabla_y L_{M_k}(x,y)$ are
gradients of $L_{M_k}(x,y)$ with respect to $x$ and $y$, respectively,
and $(\varepsilon_k,\xi_k)$ are (possibly dependent) random
variables with zero mean. In particular, by taking the expectation in the iterates of~\cref{algo:primal-dual-method}, we can prove that they can be expressed as~\eqref{eq:promai-dual-subgrad1} and~\eqref{eq:promai-dual-subgrad2} with the unbiased stochastic gradients $\nabla_x L_{M_k}(x_k,y_k)+\varepsilon_k$ and $\nabla_y L_{M_k}(x_k,y_k)+\xi_k$, respectively. Based on the above definitions, the notion of the duality gap of the constrained saddle point problem $\min_{x\in {\cal X}} \max_{y\in {\cal Y}} L_I(x,y)=\max_{y\in {\cal Y}} \min_{x\in {\cal X}}L_I(x,y)$ is defined as follows.
\begin{definition}[Pseudo duality gap]
The pseudo duality gap of the constrained saddle point problem $\min_{x\in {\cal X}} \max_{y\in {\cal Y}} L_I(x,y)=\max_{y\in {\cal Y}} \min_{x\in {\cal X}}L_I(x,y)$ at any point $(x,y)\in {\cal X} \times {\cal Y}$ is defined as
\begin{align*}
&D(x,y):=L_I(x,y^*)-L_I(x^*,y),
\end{align*}
where $(x^*,y^*)\in {\cal X} \times {\cal Y}$ is the solution of the saddle point problem.
\end{definition}

Our first main result establishes the convergence rate of the (pseudo) duality gap. To this end, one needs to assume that the sequence $(\beta_k)_{k=0}^\infty$ satisfies a certain condition. In particular, we assume that there
exists a real number $\beta_0>0$ such that $\beta_k=\beta_0/(k+1),k\ge 0$. In the following, we provide an example which satisfies the assumption.
\begin{example}\label{ex:ex3}
In this example, we consider~\cref{ex:ex2} again and prove that $\beta_k$ in~\cref{ex:ex2} is upper bounded by $\beta_0/(k+1)$ with some real number $\beta_0>0$. For simplicity, assume that $P_\theta$ is
diagonalizable so that it has $|{\cal S}|$ independent left
eigenvectors $u_1,u_2,\ldots,u_{|{\cal S}|}$ and the corresponding
left eigenvalues $\lambda_1,\lambda_2,\ldots,\lambda_{|{\cal
S}|}$, respectively. If we define
\begin{align*}
&U:= \begin{bmatrix}
   u_1 & \cdots & u_{|{\cal S}|}\\
\end{bmatrix},\quad \Sigma:= \begin{bmatrix}
   \lambda_1 & & \\
   & \ddots & \\
   & & \lambda_{|{\cal S}|}\\
\end{bmatrix},
\end{align*}
then $P_\theta= U\Sigma U^{-1}$ by the similarly
transformation~\citep[pp.~114]{gentle2007matrix}. Since $P_\theta$ is
a row stochastic matrix, we can enumerate the eigenvalues as $1 =
|\lambda_1|> |\lambda_2|\ge |\lambda_3|\ge |\lambda_4|\ge\cdots\ge
|\lambda_{|{\cal S}|}|$ and $\lambda_1=1$
(see~\citet[pp.~306]{gentle2007matrix}). Then, one can prove the following results.
\begin{proposition}\label{proposition:ex3}
(a) There exists some real number $c >0$ such that $\beta_k \leq c |\lambda_2|^k$; (b) There exists some real number $d>0$ such that $|\lambda_2|^k\le d /(k+1)$ for all $k\geq 0$.
\end{proposition}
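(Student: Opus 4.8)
The plan is to establish (a) from the spectral decomposition of $P_\theta$ given just above the statement, and (b) from the elementary fact that a geometric sequence is dominated by any reciprocal polynomial. Recall from \cref{ex:ex1,ex:ex2} that $\beta_k=\zeta^{-2}\max_{s\in{\cal S}}(v_{k+1}(s)-v_k(s))$ with $v_k=(P_\theta^T)^k v_0$, so the entire task reduces to controlling the increments $v_{k+1}-v_k$. For part (a), I would first diagonalize the adjoint: since $P_\theta=U\Sigma U^{-1}$ is diagonalizable, so is $P_\theta^T$, with the same eigenvalues $\lambda_1=1,\lambda_2,\ldots,\lambda_{|{\cal S}|}$ satisfying $1>|\lambda_2|\ge\cdots$. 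The right eigenvectors $w_1,\ldots,w_{|{\cal S}|}$ of $P_\theta^T$ (the transposed left eigenvectors of $P_\theta$) form a basis, so I expand $v_0=\sum_{i=1}^{|{\cal S}|}c_i w_i$ and apply $(P_\theta^T)^k$ to obtain $v_k=\sum_i c_i\lambda_i^k w_i$. The crucial structural point is the cancellation of the stationary mode:
\begin{align*}
v_{k+1}-v_k=\sum_{i=1}^{|{\cal S}|}c_i\lambda_i^k(\lambda_i-1)w_i=\sum_{i=2}^{|{\cal S}|}c_i\lambda_i^k(\lambda_i-1)w_i,
\end{align*}
because $\lambda_1-1=0$ (indeed $c_1 w_1=v_\infty$ is the stationary distribution), so only subdominant eigenvalues survive.

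Next I would bound the surviving terms entrywise. Using $|\lambda_i|\le|\lambda_2|$ for $i\ge 2$,
\begin{align*}
\max_{s\in{\cal S}}|v_{k+1}(s)-v_k(s)|\le\sum_{i=2}^{|{\cal S}|}|c_i|\,|\lambda_i|^k\,|\lambda_i-1|\,\|w_i\|_\infty\le|\lambda_2|^k\sum_{i=2}^{|{\cal S}|}|c_i|\,|\lambda_i-1|\,\|w_i\|_\infty.
\end{align*}
Setting $C:=\sum_{i=2}^{|{\cal S}|}|c_i|\,|\lambda_i-1|\,\|w_i\|_\infty$ and noting $\max_s(v_{k+1}(s)-v_k(s))\le\max_s|v_{k+1}(s)-v_k(s)|$, substitution into the formula for $\beta_k$ yields $\beta_k\le\zeta^{-2}C\,|\lambda_2|^k$, which is claim (a) with $c:=\zeta^{-2}C$.

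For part (b), set $r:=|\lambda_2|\in(0,1)$, the strictness coming from $|\lambda_1|=1>|\lambda_2|$. The sequence $a_k:=(k+1)r^k$ satisfies $a_{k+1}/a_k=r(k+2)/(k+1)\to r<1$, so $a_k\to 0$ and is therefore bounded; taking $d:=\sup_{k\ge 0}(k+1)r^k<\infty$ gives $|\lambda_2|^k\le d/(k+1)$ for all $k\ge 0$. An explicit constant can be extracted by maximizing the continuous function $x\mapsto(x+1)r^x$ over $x\ge 0$.

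I expect the main obstacle to lie entirely in the bookkeeping of part (a): the $\lambda_i$ may be complex, so I must work in a complex eigenbasis and pass to moduli carefully, and I must be precise about whether the expansion uses eigenvectors of $P_\theta$ or of $P_\theta^T$ (the paper's $U$ collects the left eigenvectors of $P_\theta$, i.e.\ the right eigenvectors of $P_\theta^T$, which is what acts on $v_0$). Once the eigenvalue-one mode is correctly isolated and cancelled, the remaining estimate is routine, and part (b) is elementary.
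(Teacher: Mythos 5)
Your proof is correct. Part (a) is essentially the paper's own argument: expand $v_0$ in the eigenbasis of $P_\theta^T$, observe that the $\lambda_1=1$ mode cancels in the difference $v_{k+1}-v_k$, and bound the surviving modes by $|\lambda_2|^k$ times a constant; your constant $\zeta^{-2}\sum_{i\ge 2}|c_i|\,|\lambda_i-1|\,\|w_i\|_\infty$ matches the paper's $\zeta^{-2}\max_{s}\left(|c_2(\lambda_2-1)e_s^T u_2|+\cdots+|c_{|{\cal S}|}(\lambda_{|{\cal S}|}-1)e_s^T u_{|{\cal S}|}|\right)$ up to replacing a max of sums by a sum of maxes, and your care about complex eigenvalues and about which matrix the basis diagonalizes (the $u_i$ are right eigenvectors of $P_\theta^T$) is warranted — the paper's intermediate display $v_k=U^{-T}\Sigma^k U^T v_0$ is inconsistent with its stated convention, though the conclusion $v_k=\sum_i c_i\lambda_i^k u_i$ that both of you rely on is the correct one. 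Part (b) is where you genuinely diverge. The paper argues constructively: it reduces the claim to $\exp(-k\ln|\lambda_2|)\ge k+1$, lower-bounds the exponential by its second-order Taylor polynomial to extract an explicit threshold $k^*=\max\left(0,\left\lceil \frac{2\ln|\lambda_2|+2}{(\ln|\lambda_2|)^2}\right\rceil\right)$ beyond which $|\lambda_2|^k\le 1/(k+1)$, and then shifts the index to get $d=|\lambda_2|^{-k^*}$. You instead note that $a_k=(k+1)r^k$ with $r=|\lambda_2|<1$ has ratio $a_{k+1}/a_k\to r<1$, hence converges to zero, hence is bounded, and take $d=\sup_k a_k$. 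Your route is shorter and avoids the Taylor bookkeeping, at the price of being non-constructive; the paper's version yields an explicit numerical value of $d$ (and hence of $\beta_0=c\cdot d$), which is the kind of constant one would need if one wanted to trace $\beta_0$ quantitatively through the sample-complexity bounds, although the paper never actually exploits this explicitness.
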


The proof of~\cref{proposition:ex3} is given in {\bf Appendix~E}. Combining this with~\eqref{eq:eq14} yields $\beta_k \le \frac{c \cdot d}{k+1},\forall k \ge 0$. Therefore, we can set $\beta_k=\beta_0/(k+1)$ with $\beta_0=c\cdot d$.
\end{example}
Throughout the paper, $\hat x_T$ and $\hat y_T$ are defined as
\begin{align}
&\hat x_T = \frac{1}{T}\sum_{k=0}^{T-1}{x_k},\quad \hat
y_T=\frac{1}{T}\sum_{k=0}^{T-1}{\bar M_k y_k},\label{eq:def:hat-x-y}\\
&x_k:=\begin{bmatrix}
   Q_k\\
   V_k\\
\end{bmatrix},\quad y_k: = \begin{bmatrix}
   \lambda_k \\
   \mu_k \\
\end{bmatrix},\nonumber
\end{align}
where $\bar M_k := \begin{bmatrix} I_{|{\cal S}||{\cal A}|} & 0\\ 0 & M_k\\ \end{bmatrix}$. The first main result establishes the convergence of the duality gap $D(\hat x_T,\hat y_T)$ at point $(\hat x_T,\hat y_T)\in {\cal X}\times {\cal Y}$. Note that according to the definitions $\hat Q_T=
\frac{1}{T}\sum_{k=0}^{T-1}{Q^{(k)}}$, $\hat V_T=
\frac{1}{T}\sum_{k=0}^{T-1}{V^{(k)}}$, and $\hat\lambda_T =
\frac{1}{T}\sum_{k=0}^{T-1}{\lambda^{(k)}}$, if we define $\hat\mu_T =
\frac{1}{T}\sum_{k=0}^{T-1}{M_k\mu^{(k)}}$, then $\hat x_T = \begin{bmatrix} \hat Q_T\\\hat V_T\\\end{bmatrix}$ and $\hat y_T = \begin{bmatrix} \hat \lambda_T\\\hat \mu_T\\\end{bmatrix}$.
\begin{theorem}\label{proposition:duality-gap-convergence}
Assume that $\gamma_k=\gamma_0/\sqrt{k+1},k \ge 0$ with some
$\gamma_0>0$, $\beta_k=\beta_0/(k+1),k\ge 0$ for any real
number $\beta_0>0$, and let $\eta = \frac{\sigma}{|{\cal S}|} \bf{1}_{|{\cal S}|}$. For any $\varepsilon \in (0,1)$ and $\delta\in (0,1/e)$, where $e$ is the Euler's number, if
\begin{align*}
&T\ge\kappa\frac{\sigma^2|{\cal S}|^4|{\cal A}|^4}{\zeta^4(1-\alpha)^4}\frac{1}{\varepsilon^2}\ln \left(\frac{1}{\delta}\right)
\end{align*}
then with probability at least $1-\delta$, we have $D(\hat x_T,\hat y_T)\le\varepsilon$, where $\kappa:=\max\{\kappa_1,\kappa_2\}$,
\begin{align*}
&\kappa_1:=\left(\frac{12+4\beta_0}{\zeta^2|{\cal S}|^2|{\cal A}|^2\gamma_0}+26\gamma_0\right)^2,\\
&\kappa_2:=(2184+416\sqrt{26})\gamma_0^2 + (1066+416\sqrt{26})\gamma_0+832+16\sqrt{26}.
\end{align*}
\end{theorem}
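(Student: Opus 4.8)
The plan is to run the standard projected stochastic primal--dual analysis, but with two nonstandard ingredients that the paper's construction is built around: a time-varying Lagrangian $L_{M_k}$ and a time-varying comparison point in the dual telescoping. First I would write the updates \eqref{eq:promai-dual-subgrad1}--\eqref{eq:promai-dual-subgrad2} as $x_{k+1}=\Pi_{\cal X}(x_k-\gamma_k g_x^k)$ and $y_{k+1}=\Pi_{\cal Y}(y_k+\gamma_k g_y^k)$, with $g_x^k=\nabla_x L_{M_k}(x_k,y_k)+\varepsilon_k$, $g_y^k=\nabla_y L_{M_k}(x_k,y_k)+\xi_k$, and $\mathbb E[\varepsilon_k\mid{\cal F}_k]=\mathbb E[\xi_k\mid{\cal F}_k]=0$ on the natural history ${\cal F}_k$. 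Nonexpansiveness of the Euclidean projection onto a convex set gives, for any feasible $(x,y)\in{\cal X}\times{\cal Y}$,
\begin{align*}
&2\gamma_k(g_x^k)^T(x_k-x)\le\|x_k-x\|_2^2-\|x_{k+1}-x\|_2^2+\gamma_k^2\|g_x^k\|_2^2,\\
&-2\gamma_k(g_y^k)^T(y_k-y)\le\|y_k-y\|_2^2-\|y_{k+1}-y\|_2^2+\gamma_k^2\|g_y^k\|_2^2.
\end{align*}
Because $L_{M_k}$ is bilinear (affine in $x$ and in $y$), the identity $L_{M_k}(x_k,y)-L_{M_k}(x,y_k)=\nabla_x L_{M_k}(x_k,y_k)^T(x_k-x)-\nabla_y L_{M_k}(x_k,y_k)^T(y_k-y)$ holds exactly; substituting the two displays and $g^k=\nabla L_{M_k}+\text{noise}$ yields a per-step bound for $L_{M_k}(x_k,y)-L_{M_k}(x,y_k)$ in terms of telescoping distances, a gradient-norm term $\tfrac{\gamma_k}{2}(\|g_x^k\|_2^2+\|g_y^k\|_2^2)$, and noise inner products $-\varepsilon_k^T(x_k-x)+\xi_k^T(y_k-y)$.

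The second step links this to the duality gap through the identity $L_{M_k}(x,y)=L_I(x,\bar M_k y)$, valid because $M_k$ is symmetric. Since $L_I$ is affine in each argument and $\hat y_T=\tfrac1T\sum_k\bar M_k y_k$, I get $T\,D(\hat x_T,\hat y_T)=\sum_{k=0}^{T-1}[L_I(x_k,y^*)-L_I(x^*,\bar M_k y_k)]=\sum_{k=0}^{T-1}[L_{M_k}(x_k,\bar M_k^{-1}y^*)-L_{M_k}(x^*,y_k)]$. I therefore apply the per-step bound with the fixed primal comparison $x=x^*$ and the \emph{time-varying} dual comparison $y=z_k:=\bar M_k^{-1}y^*=[\lambda^*;\,M_k^{-1}\lambda^*]$. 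A crucial feasibility check is that $z_k\in{\cal Y}$ for every $k$: \cref{lemma:bound-of-solutions} gives $\|\lambda^*\|_\infty\le\|\eta\|_1/(1-\alpha)$ and the lower bound $\tau_{a,k}(s)\ge\zeta$ gives $\|M_k^{-1}\|_\infty\le 1/\zeta$, so $\|M_k^{-1}\lambda^*\|_\infty\le\|\eta\|_1/(\zeta(1-\alpha))$ and the projection inequality is legitimate at each step.

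The third step is the main technical obstacle, since the dual sum $\sum_k\tfrac1{2\gamma_k}(\|y_k-z_k\|_2^2-\|y_{k+1}-z_k\|_2^2)$ does not telescope as $z_k$ drifts. I would expand $\|y_{k+1}-z_k\|_2^2=\|y_{k+1}-z_{k+1}\|_2^2+2(y_{k+1}-z_{k+1})^T(z_{k+1}-z_k)+\|z_{k+1}-z_k\|_2^2$, apply Abel summation to the now-telescoping part, and control the drift via $\|z_{k+1}-z_k\|_2=\|(M_{k+1}^{-1}-M_k^{-1})\lambda^*\|_2\le\beta_k\|\lambda^*\|_2$ from \cref{assump:distribution-convergence}. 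With $\gamma_k=\gamma_0/\sqrt{k+1}$ the Abel term is $O(\sqrt T)$ (using $\sum_k(\gamma_k^{-1}-\gamma_{k-1}^{-1})=\gamma_{T-1}^{-1}-\gamma_0^{-1}$ and boundedness of $\|y_k-z_k\|_2$), while the correction terms scale like $\sum_k\beta_k/\gamma_k$ and $\sum_k\beta_k^2/\gamma_k$; here the choice $\beta_k=\beta_0/(k+1)$ is exactly what makes $\beta_k/\gamma_k=O((k+1)^{-1/2})$, so $\sum_k\beta_k/\gamma_k=O(\sqrt T)$ and $\sum_k\beta_k^2/\gamma_k=O(1)$. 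In parallel I would bound the second moments $\|g_x^k\|_2^2+\|g_y^k\|_2^2\le G$ by a constant polynomial in $|{\cal S}|,|{\cal A}|,\sigma,1/(1-\alpha),1/\zeta$, using compactness of ${\cal X},{\cal Y}$ and the explicit $|{\cal S}|,|{\cal A}|$ inflation from the randomizing matrices $N,H$, so that $\sum_k\tfrac{\gamma_k}{2}G=O(\gamma_0 G\sqrt T)$.

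Finally I would treat the noise sum $S_T:=\sum_{k=0}^{T-1}[-\varepsilon_k^T(x_k-x^*)+\xi_k^T(y_k-z_k)]$, a martingale because each increment has zero conditional mean given ${\cal F}_k$ (the comparison points $x^*,z_k$ being deterministic and ${\cal F}_k$-measurable) and is bounded by a constant $C=\mathrm{poly}(|{\cal S}|,|{\cal A}|,\sigma,1/(1-\alpha),1/\zeta)$ on the compact domains; Azuma--Hoeffding then gives $\mathbb P[S_T\ge C\sqrt{2T\ln(1/\delta)}]\le\delta$. Collecting the three contributions, dividing by $T$, and inserting $\gamma_k=\gamma_0/\sqrt{k+1}$ and $\eta=\tfrac{\sigma}{|{\cal S}|}\mathbf 1_{|{\cal S}|}$ yields, with probability at least $1-\delta$, a bound $D(\hat x_T,\hat y_T)\le\tfrac{c_1}{\sqrt T}+c_2\sqrt{\tfrac{\ln(1/\delta)}{T}}$ with $c_1,c_2$ explicit and of order $\tfrac{\sigma|{\cal S}|^2|{\cal A}|^2}{\zeta^2(1-\alpha)^2}$; setting the right-hand side $\le\varepsilon$ and solving for $T$ gives the stated complexity, with $\kappa_1$ coming from the $\beta_0,\gamma_0$-dependent leading term and $\kappa_2$ from the combined gradient-norm and concentration terms. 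The principal difficulty is the bookkeeping of the third step---making the time-varying dual comparison point interact correctly with the step-size and distribution-convergence rates---together with tracking the exact polynomial constants through the $N,H$ randomization.
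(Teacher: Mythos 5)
Your proposal is correct in its overall architecture and, for most of its length, mirrors the paper's own proof: the paper likewise runs the projected primal--dual iterate relations (its \cref{lemma:basic-iterate-relations1}), bounds the stochastic gradients by constants $K_1,K_2$ (\cref{lemma:K1-K2}), compares the dual iterates against the \emph{drifting} point $\bar M_k^{-1}y^*$, and controls the resulting non-telescoping sum by exactly your Abel-summation-plus-drift argument, with the drift charged to $\beta_{k-1}/\gamma_{k-1}$ via \cref{assump:distribution-convergence} (\cref{lemma:Phi-bounds}); your explicit identity $L_{M_k}(x,y)=L_I(x,\bar M_k y)$ and the feasibility check $\bar M_k^{-1}y^*\in{\cal Y}$ are used only implicitly there, and making them explicit is a genuine improvement in rigor. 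Where you genuinely diverge is the stochastic error term. The paper does \emph{not} isolate the linear noise martingale $S_T=\sum_k[-\varepsilon_k^T(x_k-x^*)+\xi_k^T(y_k-z_k)]$; instead it keeps conditional expectations inside the per-step inequalities and forms the martingale ${\cal G}_T=\sum_k\frac{1}{2\gamma_k}({\cal E}_{k+1}-{\mathbb E}[{\cal E}_{k+1}|{\cal F}_k])$ of centered squared distances, to which it applies the Bernstein (Freedman) inequality, requiring two separate lemmas: an increment bound $\Delta{\cal G}_T\le b$ (\cref{lemma:bound1}) and a quadratic-variation bound $\langle{\cal G}\rangle_T/T\le a$ (\cref{lemma:bound2}). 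Your route---path-wise inequalities plus Azuma--Hoeffding on $S_T$ with a single uniform increment bound---is valid (the increments are conditionally centered since $x_k,y_k,z_k$ are ${\cal F}_k$-measurable, and bounded since $\|\varepsilon_k\|_2\le 2K_1$, $\|\xi_k\|_2\le 2K_2$ on the compact domains) and is arguably cleaner, dispensing with one of the two concentration lemmas. The one caveat is quantitative: because the two martingales and the two concentration inequalities are different, the numerical constants you would obtain are not the paper's $\kappa_1,\kappa_2$ (which are reverse-engineered from the Bernstein parameters $a,b$ and from $C_0$ in \cref{proposition:duality-bound}); you would prove the theorem with the same functional dependence $\frac{\sigma^2|{\cal S}|^4|{\cal A}|^4}{\zeta^4(1-\alpha)^4\varepsilon^2}\ln(1/\delta)$ but with your own constants, so your final sentence claiming the stated $\kappa_1,\kappa_2$ emerge should be weakened accordingly (indeed, even in $|{\cal S}|,|{\cal A}|,\zeta$ the Azuma constant $C^2$ scales slightly differently from the paper's $a$, so the match is in order of magnitude, not in the literal coefficients).
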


The proof of~\cref{proposition:duality-gap-convergence} will be presented in the next section. If we run~\cref{algo:primal-dual-method}, then with the number of iterations, $T$, at least $\kappa \frac{\sigma^4 |{\cal S}|^4 |{\cal A}|^4}{\zeta^4(1-\alpha)^4}\frac{1}{\varepsilon^2}\ln \left(\frac{1}{\delta}\right)$, the duality gap, $D(\hat x_T,\hat y_T)$, is less than or equal to $\varepsilon$ with probability $1-\delta$, meaning that $D(\hat x_T,\hat y_T)\to 0$ almost surely as $T\to\infty$ and that the primal and dual solutions almost surely converge to the true ones solving~\eqref{eq:saddle-point-3} as $T\to \infty$. In what follows, we establish the fact that an $\varepsilon$-suboptimal policy can be recovered within a finite number of iterations, where the meaning of $\varepsilon$-suboptimal policy will be defined soon.
\begin{theorem}\label{proposition:dual-policy-convergence}
Assume that $\gamma_k=\gamma_0/\sqrt{k+1},k \ge 0$ with some
$\gamma_0>0$, $\beta_k=\beta_0/(k+1),k\ge 0$ for any real
number $\beta_0>0$, and let $\eta = \frac{\sigma}{|{\cal S}|} \bf{1}_{|{\cal S}|}$. For any $\varepsilon \in (0,1)$ and $\delta\in (0,1/e)$, where $e$ is the Euler's number, if
\begin{align}
&T\ge\kappa \frac{\sigma^2|{\cal S}|^6|{\cal A}|^4}{\zeta^4(1-\alpha)^6}\frac{1}{\varepsilon^2}\ln\left( \frac{1}{\delta}\right),\label{eq:complexity-bound}
\end{align}
then with probability at least $1-\delta$, we have $\|V^*-V^{\hat
\pi_T^d}\|_\infty\le\varepsilon$, where $\hat \pi_T^d$ is the dual policy defined in~\cref{def:primal-dual-policy}.
\end{theorem}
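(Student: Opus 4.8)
The plan is to reduce the policy guarantee to the duality-gap guarantee already established in~\cref{proposition:duality-gap-convergence}. Since the dual policy $\hat\pi_T^d$ is built entirely from the averaged dual iterate $\hat\lambda_T$, and since one half of the pseudo duality gap measures exactly how much mass $\hat\lambda_T$ places on suboptimal state-action pairs, I would first bound $\|V^*-V^{\hat\pi_T^d}\|_\infty$ by a constant multiple of $D(\hat x_T,\hat y_T)$ and then read off the iteration count from~\cref{proposition:duality-gap-convergence}. A pleasant feature of this route is that the whole reduction is deterministic given the realized gap, so the $1-\delta$ confidence transfers verbatim.

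\emph{From gap to dual suboptimality.} Because $x^*=(Q^*,V^*)$ minimizes $L_I(\cdot,y^*)$ at the saddle point, $L_I(x^*,y^*)\le L_I(\hat x_T,y^*)$, and hence $0\le L_I(x^*,y^*)-L_I(x^*,\hat y_T)\le D(\hat x_T,\hat y_T)$. Evaluating $L_I$ at $x^*$, the equality constraint $\alpha PV^*+R=Q^*$ annihilates every $\mu$-term (so $\hat\mu_T$ is irrelevant), and complementary slackness gives $(\lambda^*)^T(Q^*-({\bf 1}_{|{\cal A}|}\otimes I_{|{\cal S}|})V^*)=0$. Therefore
\[
L_I(x^*,y^*)-L_I(x^*,\hat y_T)=\sum_{s\in{\cal S}}\sum_{a\in{\cal A}}\hat\lambda_{a,T}(s)\bigl(V^*(s)-Q_a^*(s)\bigr)=:G\ge 0,
\]
using $V^*(s)\ge Q_a^*(s)$ for every $a$, so that $G\le D(\hat x_T,\hat y_T)$.

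\emph{From dual suboptimality to value loss.} Writing $\hat\pi:=\hat\pi_T^d$ and letting $V\mapsto R_{\hat\pi}+\alpha P_{\hat\pi}V$ be its Bellman operator with fixed point $V^{\hat\pi}$, one has $V^*-V^{\hat\pi}=(I-\alpha P_{\hat\pi})^{-1}\bigl(V^*-R_{\hat\pi}-\alpha P_{\hat\pi}V^*\bigr)$, and since $(R_{\hat\pi}+\alpha P_{\hat\pi}V^*)(s)=\sum_a\hat\pi(a|s)Q_a^*(s)$ by $Q_a^*=R_a+\alpha P_aV^*$,
\[
\bigl(V^*-R_{\hat\pi}-\alpha P_{\hat\pi}V^*\bigr)(s)=\sum_{a\in{\cal A}}\hat\pi(a|s)\bigl(V^*(s)-Q_a^*(s)\bigr)=\frac{\sum_{a\in{\cal A}}\hat\lambda_{a,T}(s)\bigl(V^*(s)-Q_a^*(s)\bigr)}{\sum_{a'\in{\cal A}}\hat\lambda_{a',T}(s)}\ge0 .
\]
Because $\hat\lambda_T\in\Xi$, the normalizer satisfies $\sum_{a'}\hat\lambda_{a',T}(s)\ge\eta(s)=\sigma/|{\cal S}|$, so each (nonnegative) entry is at most $\tfrac{|{\cal S}|}{\sigma}$ times the corresponding summand of $G$, giving $\|V^*-R_{\hat\pi}-\alpha P_{\hat\pi}V^*\|_\infty\le\tfrac{|{\cal S}|}{\sigma}G$. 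Combining this with $(I-\alpha P_{\hat\pi})^{-1}=\sum_{k\ge0}\alpha^kP_{\hat\pi}^k\ge0$ and $\|(I-\alpha P_{\hat\pi})^{-1}\|_\infty\le\tfrac{1}{1-\alpha}$ yields
\[
\|V^*-V^{\hat\pi_T^d}\|_\infty\le\frac{1}{1-\alpha}\,\frac{|{\cal S}|}{\sigma}\,G\le\frac{|{\cal S}|}{\sigma(1-\alpha)}\,D(\hat x_T,\hat y_T).
\]

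\emph{Conclusion and main obstacle.} It then suffices to drive the gap below $\varepsilon'=\Theta\!\bigl(\tfrac{\sigma(1-\alpha)}{|{\cal S}|}\varepsilon\bigr)$; substituting this target for $\varepsilon$ in~\cref{proposition:duality-gap-convergence} produces an iteration count of the order displayed in~\eqref{eq:complexity-bound}, namely $\frac{|{\cal S}|^6|{\cal A}|^4}{\zeta^4(1-\alpha)^6}\frac{1}{\varepsilon^2}\ln(1/\delta)$ (up to the $\sigma$-factors appearing there), and the event $\{D\le\varepsilon'\}$ of probability at least $1-\delta$ already delivers $\|V^*-V^{\hat\pi_T^d}\|_\infty\le\varepsilon$. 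I expect the delicate step to be the value-loss reduction: establishing the exact cancellation (equality constraint plus complementary slackness) that collapses the dual suboptimality into the state-wise quantity $G$, and then lowering-bounding the normalizer $\sum_{a'}\hat\lambda_{a',T}(s)$ through $\hat\lambda_T\in\Xi$. It is precisely this normalizer bound, together with the nonnegativity that legitimizes the $\ell_\infty$ estimate through the nonnegative resolvent, that makes the performance gap controllable uniformly over states \emph{without} any dependence on the (possibly vanishing) action gap $\min_{s}\bigl(V^*(s)-\max_{a\ne\pi^*(s)}Q_a^*(s)\bigr)$.
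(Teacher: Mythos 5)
Your proposal is correct, and it reaches the same quantitative endpoint as the paper — the reduction $\|V^*-V^{\hat\pi_T^d}\|_\infty\le\frac{|{\cal S}|}{\sigma(1-\alpha)}D(\hat x_T,\hat y_T)$ followed by invoking \cref{proposition:duality-gap-convergence} with $\varepsilon$ rescaled by $\frac{\sigma(1-\alpha)}{|{\cal S}|}$ — but through a genuinely different route for the key lemma. The paper proves an \emph{exact identity} (\cref{lemma:duality-gap-representation}): $D(\hat x_T,\hat y_T)=\sum_{a'\in{\cal A}}\hat\lambda_{a',T}^T(I-\alpha P_{\hat\pi_T^d})(V^*-V^{\hat\pi_T^d})$, established by a long chain of algebraic manipulations that requires \cref{corollary:uniqueness-of-dual-LP-DP} to cancel the $\eta^T(\hat V_T-V^*)$ terms (equivalently, that $L_I(\cdot,y^*)$ is constant in $x$ by dual feasibility and $\mu^*=\lambda^*$), and then lower-bounds this by $\min_s\eta(s)\,(1-\alpha)\|V^*-V^{\hat\pi_T^d}\|_\infty$ using $\Xi$ and $\|\alpha P_{\hat\pi_T^d}\|_\infty=\alpha$. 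You instead discard the primal half of the gap via the saddle-point inequality $L_I(x^*,y^*)\le L_I(\hat x_T,y^*)$, compute the dual half $G=\hat\lambda_T^T\bigl(({\bf 1}_{|{\cal A}|}\otimes I_{|{\cal S}|})V^*-Q^*\bigr)$ exactly from the equality constraint and complementary slackness, and then pass to the value loss through the resolvent identity $V^*-V^{\hat\pi}=(I-\alpha P_{\hat\pi})^{-1}(V^*-Q^*_{\hat\pi})$; in fact your $G$ coincides with the paper's representation of $D$ (since $V^*-Q^*_{\hat\pi_T^d}=(I-\alpha P_{\hat\pi_T^d})(V^*-V^{\hat\pi_T^d})$), so you are proving exactly the one-sided half of the paper's identity that is actually needed. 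Your version is more modular and elementary — it avoids \cref{corollary:uniqueness-of-dual-LP-DP} entirely and makes the roles of complementary slackness, the $\Xi$-normalizer, and the nonnegative resolvent explicit — while the paper's identity is sharper, showing the gap \emph{equals} the $\hat\lambda_T$-weighted policy loss rather than merely dominating it. One cosmetic remark shared by both arguments: substituting $\varepsilon'=\frac{\sigma(1-\alpha)}{|{\cal S}|}\varepsilon$ makes the $\sigma^2$ factors cancel, so the $\sigma^2$ in~\eqref{eq:complexity-bound} is slack (harmless since $\sigma\ge1$); this is not a defect of your proof relative to the paper's.
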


If we run~\cref{algo:primal-dual-method}, then after the number of iterations, $T$, at least $T\ge\kappa \frac{\sigma^2|{\cal S}|^6|{\cal A}|^4}{\zeta^4(1-\alpha)^6}\frac{1}{\varepsilon^2}\ln\left( \frac{1}{\delta}\right)$, the policy, $\hat \pi_T^d$, obtained from the dual iterates is $\varepsilon$-suboptimal with probability at least $1-\delta$ in the sense that the distance, $\|V^*-V^{\hat \pi_T^d} \|_\infty$, between the optimal value function and the value function corresponding to $\hat \pi_T^d$ is less than or equal to $\varepsilon$. The complexity bound in~\eqref{proposition:duality-gap-convergence} is not better than those of existing methods, for instance, ${\cal O}\left(\frac{|{\cal S}|^4|{\cal A}|^2\sigma^2}{(1-\alpha)^6\varepsilon^2}\ln\left(\frac{1}{\delta}\right)\right)$ of the SPD-RL algorithm~\citep[Theorem~4]{chen2016stochastic} and ${\cal O}\left(\frac{|{\cal S}||{\cal A}|}{(1-\alpha)^8\varepsilon^4}\ln \left(\frac{1}{\delta}\right)\right)$ of the delayed Q-learning~\citep{strehl2009reinforcement}. The increased complexity can be viewed as a cost to pay for its off-policy and online learning ability. Lastly, \cref{proposition:duality-gap-convergence} suggests that the SPD Q-learning guarantees the convergence with a certain convergence rate even when the state-action distribution under a certain behavior policy is time-varying but sub-linearly converges to a stationary distribution as stated in~\cref{assump:distribution-convergence}. To the author's best knowledge, this seems to be the first convergence analysis in the context of  time-varying state-action distributions for off-policy RL. Details of the convergence proofs are given in the next section.

\section{Convergence Analysis}\label{section:convergence}

The main goal of this section is to provide proofs of the convergence
results of~\cref{algo:primal-dual-method}. Define the $\sigma$-field
\begin{align*}
{\cal F}_k:=\sigma(&\varepsilon_0,\ldots,\varepsilon_{k-1},\xi_0,\ldots,\xi_{k-1},x_0,\ldots,x_k,y_0,\ldots,y_k)
\end{align*}
related to all random variables of the algorithm until time $k$.
The following lemma introduces basic iterate
relations~\citep{nedic2009subgradient}.
\begin{lemma}[Basic iterate
relations~\citep{nedic2009subgradient}]\label{lemma:basic-iterate-relations1}
Let the sequences $(x_k,y_k)_{k=0}^\infty$ be generated by the SPD
algorithm in~\eqref{eq:promai-dual-subgrad1}
and~\eqref{eq:promai-dual-subgrad2}. Then, we have:
\begin{enumerate}
\item For any $x \in {\mathbb R}^{|{\cal S}|} \times {\mathbb R}^{|{\cal S}||{\cal A}|}$ and for all $k \geq 0$,
\begin{align*}
{\mathbb E}[\|x_{k+1}-x\|_2^2 |{\cal F}_k] &\le \| x_k-x
\|_2^2 + \gamma_k^2 {\mathbb E}[\| \nabla _x L_{M_k}
(x_k,y_k)+\varepsilon_k \|_2^2 |{\cal F}_k]\\
&-2\gamma_k(L_{M_k}(x_k,y_k)-L_{M_k}(x,y_k)).
\end{align*}

\item For any $y\in {\mathbb R}_+^{|{\cal S}||{\cal A}|} \times {\mathbb R}^{|{\cal S}||{\cal A}|}$ and for all $k \geq 0$,
\begin{align*}
{\mathbb E}[\| y_{k+1}-y \|_2^2 |{\cal F}_k ] &\le \| y_k-y \|_2^2
+ \gamma_k^2 {\mathbb E}[\| \nabla _y L_{M_k} (x_k ,y_k ) + \xi_k
\|_2^2 |{\cal F}_k ]\\
&+2\gamma_k(L_{M_k}(x_k,y_k)-L_{M_k}(x_k,y)).
\end{align*}
\end{enumerate}
\end{lemma}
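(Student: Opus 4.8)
The plan is to treat the two inequalities as mirror images of the standard projected stochastic (sub)gradient recursion, and to exploit that $L_{M_k}$ is affine in each block of variables separately---hence simultaneously convex in $x$ and concave in $y$---through its decomposition $L_{M_k}(x,y)=f(x)+x^TA_ky+b_k^Ty$ with $f(x)=\eta^TV$ linear. I will prove statement~(1) in detail; statement~(2) follows from the same argument applied to the ascent update, with the signs reversed.

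First I would invoke the nonexpansiveness of the Euclidean projection onto the convex set ${\cal X}$, namely $\|\Pi_{\cal X}(z)-\Pi_{\cal X}(w)\|_2\le\|z-w\|_2$, together with $\Pi_{\cal X}(x)=x$ for the fixed point $x\in{\cal X}$. Applied to the update~\eqref{eq:promai-dual-subgrad1}, this yields
\begin{align*}
\|x_{k+1}-x\|_2^2
&=\|\Pi_{\cal X}(x_k-\gamma_k(\nabla_xL_{M_k}(x_k,y_k)+\varepsilon_k))-x\|_2^2\\
&\le\|x_k-\gamma_k(\nabla_xL_{M_k}(x_k,y_k)+\varepsilon_k)-x\|_2^2.
\end{align*}
Expanding the square on the right-hand side gives
\begin{align*}
\|x_{k+1}-x\|_2^2
&\le\|x_k-x\|_2^2+\gamma_k^2\|\nabla_xL_{M_k}(x_k,y_k)+\varepsilon_k\|_2^2\\
&\quad-2\gamma_k(\nabla_xL_{M_k}(x_k,y_k)+\varepsilon_k)^T(x_k-x).
\end{align*}

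The next step is to take ${\mathbb E}[\,\cdot\mid{\cal F}_k]$. Since $x_k,y_k$ are ${\cal F}_k$-measurable and $x$ is a fixed deterministic point, both $x_k-x$ and $\nabla_xL_{M_k}(x_k,y_k)$ are ${\cal F}_k$-measurable; combined with the conditional unbiasedness ${\mathbb E}[\varepsilon_k\mid{\cal F}_k]=0$ established in the construction of the stochastic gradients, the cross term involving $\varepsilon_k$ vanishes, i.e. ${\mathbb E}[\varepsilon_k^T(x_k-x)\mid{\cal F}_k]=0$. It then remains to bound $-2\gamma_k\nabla_xL_{M_k}(x_k,y_k)^T(x_k-x)$. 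Because $x\mapsto L_{M_k}(x,y_k)$ is convex (in fact affine), the (sub)gradient inequality gives
\begin{align*}
\nabla_xL_{M_k}(x_k,y_k)^T(x_k-x)\ge L_{M_k}(x_k,y_k)-L_{M_k}(x,y_k),
\end{align*}
so that $-2\gamma_k\nabla_xL_{M_k}(x_k,y_k)^T(x_k-x)\le-2\gamma_k(L_{M_k}(x_k,y_k)-L_{M_k}(x,y_k))$. Substituting these two facts into the conditional expectation of the expanded inequality produces exactly statement~(1).

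For statement~(2) I would repeat the argument for the ascent update~\eqref{eq:promai-dual-subgrad2}. Nonexpansiveness and expansion now leave the cross term $+2\gamma_k\nabla_yL_{M_k}(x_k,y_k)^T(y_k-y)$; the $\xi_k$ contribution again vanishes under ${\mathbb E}[\xi_k\mid{\cal F}_k]=0$, and the concavity of $y\mapsto L_{M_k}(x_k,y)$ gives $\nabla_yL_{M_k}(x_k,y_k)^T(y_k-y)\le L_{M_k}(x_k,y_k)-L_{M_k}(x_k,y)$, which is precisely the term with the $+$ sign appearing in the claim. The only genuinely delicate point is the measurability and zero-mean bookkeeping for the noise cross terms: one must verify that $\varepsilon_k,\xi_k$ are conditionally centered given ${\cal F}_k$ (not merely unconditionally zero-mean), and that the evaluation points $x_k,y_k$ are ${\cal F}_k$-measurable so they may be pulled out of the conditional expectation. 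Everything else is the routine ``expand the square and apply convexity'' computation.
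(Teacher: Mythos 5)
Your proof is correct and follows essentially the same route as the paper: the paper's entire proof of this lemma is a citation of \citet[Lemma~3.1]{nedic2009subgradient} followed by ``taking the expectations,'' and what you have written out---nonexpansiveness of the projection, expansion of the square, elimination of the noise cross terms via ${\mathbb E}[\varepsilon_k \mid {\cal F}_k]=0$ together with the ${\cal F}_k$-measurability of $(x_k,y_k)$, and the gradient inequality for the (affine) Lagrangian---is precisely the proof of that cited lemma plus the conditional-expectation step.

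One caveat, which you inherit from the paper rather than introduce: your argument requires the comparison points to lie in the projection sets ($x\in{\cal X}$ so that $\Pi_{\cal X}(x)=x$, and likewise $y\in{\cal Y}$ for the second inequality), and this hypothesis cannot be dropped---for a comparison point outside the set the projection step fails outright (in one dimension, with ${\cal X}=[0,1]$ and $z=x=2$, one has $\|\Pi_{\cal X}(z)-x\|_2=1>0=\|z-x\|_2$). The lemma as stated claims the inequalities for all $x$ and $y$ ranging over the ambient spaces, which is strictly more than either your proof or the cited Nedi\'c--Ozdaglar lemma delivers, and that extra generality is what the paper appears to lean on later when it plugs $\bar M_k^{-1}y^*$ into the second inequality. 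The later use is in fact safe only because that specific point, $\bar M_k^{-1}y^*=(\lambda^*,\,M_k^{-1}\lambda^*)$, happens to lie in ${\cal Y}$: the factor $1/\zeta$ built into the definition of ${\cal M}$ absorbs $\|M_k^{-1}\|_2\le\zeta^{-1}$, while $\lambda^*\in{\cal L}\cap\Xi$. So the restriction you state explicitly is the mathematically correct one, and imposing it does not damage the downstream analysis; on this point your write-up is actually more careful than the paper's statement of the lemma.
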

\begin{proof}
The result can be obtained by the iterate relations
in~\citet[Lemma~3.1]{nedic2009subgradient} and taking the
expectations.
\end{proof}
To prove the convergence, it is essential to establish the
boundedness of the stochastic gradients in~\eqref{eq:promai-dual-subgrad1}
and~\eqref{eq:promai-dual-subgrad2}. Particular bounds are given in the next result.
\begin{lemma}\label{lemma:K1-K2}
We have
\begin{align*}
&\|\nabla_x L_{M_k}(x_k,y_k)+\varepsilon_k\|_2 \le
\frac{\sqrt{13}|{\cal S}||{\cal A}| \|\eta\|_1
}{\zeta(1-\alpha)} =:K_1,\\
&\|\nabla_y L_{M_k}(x_k,y_k)+\xi_k\|_2 \le \frac{\sqrt{13} |{\cal
S}||{\cal A}|\sigma}{1-\alpha}=:K_2.
\end{align*}
\end{lemma}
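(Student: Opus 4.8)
The plan is to establish both inequalities \emph{pathwise}, i.e.\ uniformly over every realization of the observed transition $(s_k,a_k,s_{k+1},\hat r_{s_ks_{k+1}a_k})$ and of the uniform draws $(\hat s_k,\hat a_k)$. The two vectors $\nabla_x L_{M_k}(x_k,y_k)+\varepsilon_k$ and $\nabla_y L_{M_k}(x_k,y_k)+\xi_k$ are precisely the realized stochastic (sub)gradient directions appearing inside the projections in steps~5 and~6 of \cref{algo:primal-dual-method}, so I would read them off coordinate by coordinate and bound their Euclidean norms directly. The structural fact making the bound deterministic is that $Q_k,V_k,\lambda_k,\mu_k$ are projected onto ${\cal V},{\cal L},{\cal M}$ at the end of every iteration; hence by \cref{lemma:bound-of-solutions} they always satisfy $\|V_k\|_\infty,\|Q_{a,k}\|_\infty\le\frac{\sigma}{1-\alpha}$, $\|\lambda_k\|_\infty\le\frac{\|\eta\|_1}{1-\alpha}$ and $\|\mu_k\|_\infty\le\frac{\|\eta\|_1}{\zeta(1-\alpha)}$, while $\hat r_{s_ks_{k+1}a_k}\in[0,\sigma]$.

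For the $x$-bound I would exploit that the $Q$- and $V$-coordinates occupy orthogonal blocks, so that $\|\nabla_x L_{M_k}+\varepsilon_k\|_2^2=\|g_Q\|_2^2+\|g_V\|_2^2$, and that each block is a short sum of single-entry terms of the form $c\,e_s e_{s'}^T z_a$ whose norm is exactly $|c|\,|z_a(s')|\le|c|\,\|z\|_\infty$. Reading step~5, the $Q$-block is governed by $\|\mu_k\|_\infty$ and by $|{\cal S}||{\cal A}|\,\|\lambda_k\|_\infty$ (the extra factor coming from the randomization matrices $N,H$), and the $V$-block by $|{\cal S}|\|\eta\|_\infty$, $|{\cal S}||{\cal A}|\,\|\lambda_k\|_\infty$ and $\alpha\|\mu_k\|_\infty$. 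Inserting the bounds above and using $\|\eta\|_\infty\le\|\eta\|_1$, $\alpha\in(0,1)$, $\zeta\le1$ and $|{\cal S}|,|{\cal A}|\ge1$ to absorb every stray term into the common scale $|{\cal S}|^2|{\cal A}|^2\frac{\|\eta\|_1^2}{\zeta^2(1-\alpha)^2}$, the sum of the squared per-entry contributions yields $K_1$.

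The $y$-bound follows by the same recipe and is in fact cleaner, since each dual block contributes a single nonzero entry. From step~6 the $\lambda$-block is the entry $|{\cal S}||{\cal A}|(Q_{\hat a_k,k}(\hat s_k)-V_k(\hat s_k))$, whose square is at most $(2|{\cal S}||{\cal A}|\frac{\sigma}{1-\alpha})^2=4\,|{\cal S}|^2|{\cal A}|^2\frac{\sigma^2}{(1-\alpha)^2}$, while the $\mu$-block is the entry $\alpha V_k(s_{k+1})+\hat r_{s_ks_{k+1}a_k}-Q_{a_k,k}(s_k)$, bounded in absolute value by $3\frac{\sigma}{1-\alpha}$ after the triangle inequality together with $\alpha<1$ and $\sigma\le\frac{\sigma}{1-\alpha}$, hence square at most $9\,|{\cal S}|^2|{\cal A}|^2\frac{\sigma^2}{(1-\alpha)^2}$. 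Adding $4+9=13$ and taking the square root gives exactly $K_2$.

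The calculation is routine; the only points requiring care are keeping the argument pathwise rather than in expectation, tracking the powers of $|{\cal S}|$ and $|{\cal A}|$ introduced by $N$ and $H$ so that they collapse into the single constant $\sqrt{13}$, and handling the degenerate overlaps where a uniform draw coincides with an observed index (for instance $\hat a_k=a_k$, $\hat s_k=s_k$, or $s_{k+1}=\hat s_k$), where two rank-one terms land on the same coordinate. In those cases I would use the nonnegativity of the projected iterates, via $(a-b)^2\le a^2+b^2$ for $a,b\ge0$, to discard the cross terms and recover the same bound. I expect this bookkeeping---ensuring every term is genuinely dominated by the common $|{\cal S}|^2|{\cal A}|^2$ scale---to be the main, though shallow, obstacle.
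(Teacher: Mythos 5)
Your proposal is correct and follows essentially the same route as the paper's Appendix~A proof: a pathwise (per-realization) bound obtained by reading the stochastic gradients off the algorithm's update steps, splitting into orthogonal primal blocks (resp.\ dual blocks), bounding each entry through the projection sets ${\cal V},{\cal L},{\cal M}$, and absorbing all terms into the common scale via $\zeta\le 1$, $\|\eta\|_\infty\le\|\eta\|_1$, and $|{\cal S}|,|{\cal A}|\ge 1$, with the same $4+9=13$ accounting on the dual side. The only difference is bookkeeping: the paper simply applies $\|v_1+\cdots+v_n\|_2^2\le n\left(\|v_1\|_2^2+\cdots+\|v_n\|_2^2\right)$ to each block, which makes your case analysis of coinciding indices unnecessary (and note your two-term nonnegativity trick $(a-b)^2\le a^2+b^2$ does not literally cover the three-term overlap $s_{k+1}=\hat s_k$ in the $V$-block, where two same-sign entries collide --- there you would fall back on the triangle-inequality argument you already use for the $\mu$-block, and the resulting constants still sit comfortably below $\sqrt{13}$).
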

\begin{proof}
See~{\bf Appendix~A}.
\end{proof}

For any $x \in {\cal X}$ and $y \in {\cal Y}$, define
\begin{align*}
&{\cal E}_k^{(1)}(x):=\|x_k - x\|_2^2, \quad {\cal E}_k^{(2)}(y):= \|y_k -
y\|_2^2.
\end{align*}
In the next proposition, we derive a bound on the duality gap.
\begin{proposition}[Duality gap bound]\label{lemma:basic-iterate-relations2}
If we define
\begin{align*}
&H_k(x):=\frac{1}{2\gamma_k}({\cal E}_k^{(1)}(x) - {\mathbb
E}[{\cal E}_{k+1}^{(1)}(x)|{\cal F}_k]),\quad R_k(y):= \frac{1}{2\gamma_k}({\cal E}_k^{(2)}(y)-{\mathbb
E}[{\cal E}_{k+1}^{(2)}(y)|{\cal F}_k]),
\end{align*}
then, we have
\begin{align}
&D({\hat x}_T,{\hat y}_T)\le
\frac{1}{T}\sum_{k=0}^{T-1}{R_k(\bar M_k^{-1} y^*)}+
\frac{1}{T}\sum_{k=0}^{T-1} {H_k(x^*)} +
\frac{1}{T}\sum_{k=0}^{T-1}
{\frac{\gamma_k}{2}(K_1^2+K_2^2)}\label{eq:duality-gap-bound}
\end{align}
with probability one, where $(x^*,y^*) \in {\cal X}\times {\cal Y}$ is the primal-dual solution of the saddle point problem $\min_{x\in {\cal X}} \max_{y\in {\cal Y}} L_I(x,y)=\max_{y\in {\cal Y}} \min_{x\in {\cal X}}L_I(x,y)$.
\end{proposition}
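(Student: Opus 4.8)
The plan is to turn the duality gap at the running averages into an average of per-iterate Lagrangian increments, and the conceptual key is the identity
\[
L_{M_k}(x,y)=L_I(x,\bar M_k y),
\]
which holds because each $M_k$ is diagonal, hence symmetric, so that $\mu^T M_k(\alpha PV+R-Q)=(M_k\mu)^T(\alpha PV+R-Q)$ and $\bar M_k y=(\lambda,M_k\mu)$; equivalently $L_I(x,y)=L_{M_k}(x,\bar M_k^{-1}y)$. This single identity dictates the otherwise mysterious choices in the statement: the dual average $\hat y_T=\frac1T\sum_k \bar M_k y_k$ carries the weighting $\bar M_k$, and the comparison point inside $R_k$ is the shifted dual optimum $\bar M_k^{-1}y^*$.

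First I would use that $L_I$ is bi-affine, i.e.\ $f(x)=\eta^T V$ is linear, $x^T A y$ is bilinear, and $b^T y$ is linear, so $L_I(\cdot,y^*)$ is convex (indeed affine) in $x$ and $L_I(x^*,\cdot)$ is concave (affine) in $y$. Jensen's inequality then gives, with equality by affineness,
\[
D(\hat x_T,\hat y_T)=L_I(\hat x_T,y^*)-L_I(x^*,\hat y_T)=\frac1T\sum_{k=0}^{T-1}\big(L_I(x_k,y^*)-L_I(x^*,\bar M_k y_k)\big).
\]
Applying the identity termwise, $L_I(x_k,y^*)=L_{M_k}(x_k,\bar M_k^{-1}y^*)$ and $L_I(x^*,\bar M_k y_k)=L_{M_k}(x^*,y_k)$, so the gap equals $\frac1T\sum_k\big(L_{M_k}(x_k,\bar M_k^{-1}y^*)-L_{M_k}(x^*,y_k)\big)$. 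Inserting $\pm L_{M_k}(x_k,y_k)$ splits each summand into a primal increment $L_{M_k}(x_k,y_k)-L_{M_k}(x^*,y_k)$ and a dual increment $L_{M_k}(x_k,\bar M_k^{-1}y^*)-L_{M_k}(x_k,y_k)$.

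Next I would bound these two increments with \cref{lemma:basic-iterate-relations1}. Rearranging part~(1) at $x=x^*$ yields $L_{M_k}(x_k,y_k)-L_{M_k}(x^*,y_k)\le H_k(x^*)+\frac{\gamma_k}{2}\mathbb{E}[\|\nabla_x L_{M_k}(x_k,y_k)+\varepsilon_k\|_2^2\mid\mathcal{F}_k]$, and rearranging part~(2) at the $k$-dependent point $y=\bar M_k^{-1}y^*$ yields $L_{M_k}(x_k,\bar M_k^{-1}y^*)-L_{M_k}(x_k,y_k)\le R_k(\bar M_k^{-1}y^*)+\frac{\gamma_k}{2}\mathbb{E}[\|\nabla_y L_{M_k}(x_k,y_k)+\xi_k\|_2^2\mid\mathcal{F}_k]$; here one checks that $\bar M_k^{-1}y^*=(\lambda^*,M_k^{-1}\mu^*)$ lies in the admissible set $\mathbb{R}_+^{|{\cal S}||{\cal A}|}\times\mathbb{R}^{|{\cal S}||{\cal A}|}$ required by the lemma, which holds since $\lambda^*\ge0$ and $M_k^{-1}$ is positive diagonal. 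Because \cref{lemma:K1-K2} bounds the two stochastic gradients surely by $K_1$ and $K_2$, their conditional second moments are bounded by $K_1^2$ and $K_2^2$ almost surely. Adding the two increments and averaging over $k$ then gives exactly \eqref{eq:duality-gap-bound}, and every step is a pointwise rearrangement of the conditional iterate relations, so the bound holds with probability one.

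The main obstacle is not any individual estimate but getting the change of variables exactly right: one must recognize that the time-varying weighting $M_k$ can be moved off the Lagrangian and onto the dual variable through $\bar M_k$, and that this is precisely what forces the matched pair of choices $\hat y_T=\frac1T\sum_k\bar M_k y_k$ in the running average and $\bar M_k^{-1}y^*$ as the moving comparison point in the dual iterate relation. Once this identity is in place, the remaining work---admissibility of $\bar M_k^{-1}y^*$ and the passage from in-expectation gradient control to the almost-sure $K_i^2$ bounds---is routine.
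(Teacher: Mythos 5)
Your proof is correct and follows essentially the same route as the paper's: both rest on the identity $L_{M_k}(x,y)=L_I(x,\bar M_k y)$, the basic iterate relations specialized at $x^*$ and at the shifted comparison point $\bar M_k^{-1}y^*$, and the sure bounds $K_1,K_2$ on the stochastic gradients. The only difference is organizational—you start from the duality gap and decompose it via exact affineness of $L_I$, whereas the paper builds forward from the iterate relations for general $(x,y)$, invokes convexity/concavity to insert $(\hat x_T,\hat y_T)$, and specializes to $(x^*,y^*)$ at the end—which is the same argument run in the opposite direction.
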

\begin{proof}
We use ${\mathbb E}[\| \nabla_x L_{M_k}(x_k,y_k)+\varepsilon_k
\|_2^2 |{\cal F}_k] \le K_1^2$ and rearrange terms
in~\cref{lemma:basic-iterate-relations1} to have
\begin{align}
&L_{M_k}(x_k,y_k)-L_{M_k}(x,y_k)\le \underbrace
{\frac{1}{2\gamma_k}({\cal E}_k^{(1)}(x) - {\mathbb E}[{\cal
E}_{k+1}^{(1)}(x)|{\cal F}_k])}_{=:H_k(x)} + \frac{\gamma_k}{2}
K_1^2,\quad \forall x \in {\mathbb R}^{|{\cal S}||{\cal A}|} \times {\mathbb R}^{|{\cal S}|},\label{eq:basic1}\\
&-\underbrace {\frac{1}{2\gamma_k}({\cal E}_k^{(2)}(y)-{\mathbb
E}[{\cal E}_{k+1}^{(2)}(y)|{\cal F}_k ])}_{=:R_k(y)} -
\frac{\gamma_k}{2} K_2^2 \le L_{M_k}(x_k,y_k)-L_{M_k}(x_k,y),\quad
\forall y \in {\mathbb R}_+^{|{\cal S}||{\cal A}|} \times {\mathbb R}^{|{\cal S}||{\cal A}|}.\label{eq:basic2}
\end{align}
By plugging $\bar M_k^{-1}y \in {\mathbb R}_+^{|{\cal S}||{\cal A}|} \times {\mathbb R}^{|{\cal S}||{\cal A}|}$ with $y \in {\cal Y}$ into $y$ in~\eqref{eq:basic2}, adding
these relations over $k=0,\ldots,T-1$, dividing by $T$, and
rearranging terms, we have
\begin{align}
&-\frac{1}{T}\sum_{k=0}^{T-1}{R_k(\bar M_k^{-1}y)}-\frac{1}{T}\sum_{k=0}^{T-1}
{\frac{\gamma_k}{2}K_2^2}\le
\frac{1}{T}\sum_{k=0}^{T-1}{(L_{M_k}(x_k,y_k)-L_I(x_k,y))},\quad
\forall y \in {\cal Y}.\label{eq:eq1}
\end{align}
Similarly, we have from~\eqref{eq:basic1}
\begin{align}
&\frac{1}{T}\sum_{k=0}^{T-1}{(L_{M_k}(x_k,y_k)-L_{M_k}(x,y_k))}\le
\frac{1}{T}\sum_{k=0}^{T-1}{H_k(x)}+\frac{1}{T}\sum_{k=0}^{T-1}
{\frac{\gamma_k}{2} K_1^2} ,\quad \forall x \in {\cal X}.\label{eq:eq2}
\end{align}
Using the convexity of $L_{M_k}$ with respect to the first
argument, it follows from~\eqref{eq:eq1} that
\begin{align}
&- \frac{1}{T}\sum_{k=0}^{T-1}{R_k (\bar M_k^{-1}y)}
-\frac{1}{T}\sum_{k=0}^{T-1}{\frac{\gamma_k}{2}K_2^2}\le
\frac{1}{T}\sum_{k=0}^{T - 1} {L_{M_k}(x_k,y_k)}-L_I(\hat
x_T,y),\quad \forall y \in {\cal Y}.\label{eq:eq3}
\end{align}
Similarly, using the concavity of $L_I$ with respect to the second
argument, it follows from~\eqref{eq:eq2} that
\begin{align}
&\frac{1}{T}\sum_{k=0}^{T-1} {L_{M_k}(x_k,y_k)}-L_I(x,\hat y_T)
\le \frac{1}{T}\sum_{k=0}^{T-1}{H_k(x)}+
\frac{1}{T}\sum_{k=0}^{T-1}{\frac{\gamma_k}{2}K_1^2},\quad \forall
x \in {\cal X},\label{eq:eq4}
\end{align}
where we use the definition of $\hat y_T$ in~\eqref{eq:def:hat-x-y} to change $L_{M_k}$ to $L_I$. Multiplying both sides of~\eqref{eq:eq4} by $-1$ and adding it
with~\eqref{eq:eq3} yields
\begin{align*}
&L_I(\hat x_T,y)-L_I(x,\hat y_T)\le \frac{1}{T}\sum_{k=0}^{T-1}
{R_k (\bar M_k^{-1} y)}+\frac{1}{T}\sum_{k=0}^{T-1}
{H_k(x)}+\frac{1}{T}\sum_{k=0}^{T-1}{\frac{\gamma_k}{2}(K_1^2+K_2^2)},\\
&\forall x \in {\cal X},y\in {\cal Y}.
\end{align*}
Letting $x= x^* \in {\cal X}$ and $y= y^* \in {\cal Y}$, we obtain
\begin{align*}
&0 \le D(\hat x_T,\hat y_T)\le
\frac{1}{T}\sum_{k=0}^{T-1}{R_k(\bar M_k^{-1} y^*)}+
\frac{1}{T}\sum_{k=0}^{T-1}{H_k(x^*)} +\frac{1}{T}\sum_{k=0}^{T-1}
{\frac{\gamma_k}{2}(K_1^2+K_2^2)},
\end{align*}
and this completes the proof.
\end{proof}

To proceed, we rearrange terms in~\eqref{eq:duality-gap-bound} to
have
\begin{align}
&D(\hat x_T,\hat y_T)\le \frac{1}{T}\Phi_1(x^*)
+\frac{1}{T}\Phi_2(y^*)+\frac{1}{T}{\cal
G}_T+\frac{K_1^2+K_2^2}{2}\frac{1}{T}\sum_{k=0}^{T-1}{\gamma_k},\label{eq:duality-gap-bound2}
\end{align}
where
\begin{align*}
&\Phi_1(x):=\sum_{k=0}^{T-1}{\frac{1}{2\gamma_k}({\cal
E}_k^{(1)}(x)-{\cal E}_{k+1}^{(1)}(x))},\quad
\Phi_2(y):=\sum_{k=0}^{T-1}{\frac{1}{2\gamma_k}({\cal
E}_k^{(2)}(\bar M_k^{-1}y)-{\cal E}_{k+1}^{(2)}(M_k^{-1}y))},\\
&{\cal G}_T:=\sum_{k=0}^{T-1}{\frac{1}{2\gamma_k}({\cal
E}_{k+1}^{(1)}(x^*) + {\cal E}_{k+1}^{(2)}(\bar M_k^{-1} y^*)- {\mathbb
E}[{\cal E}_{k+1}^{(1)}(x^*)|{\cal F}_k]- {\mathbb E}[{\cal
E}_{k+1}^{(2)}(\bar M_k^{-1} y^*)|{\cal F}_k])},
\end{align*}
and ${\cal E}_k^{(1)}(x):=\|x_k - x\|_2^2$, ${\cal E}_k^{(2)}(y):= \|y_k -
y\|_2^2$. In the next result, we derive bounds on the terms $\Phi_1(x^*)$ and $\Phi_2(y^*)$.
\begin{lemma}\label{lemma:Phi-bounds}
We have
\begin{align*}
&\Phi_1(x^*)\le\frac{2\sigma^2 |{\cal S}|}{(1-\alpha)^2}\frac{1}{\gamma_{T-1}},\\
&\Phi_2(y^*)\le \frac{1}{\gamma_{T-1}}\frac{4\| \eta \|_1^2}{\zeta^4
(1-\alpha)^2}+\frac{2 \|\eta
\|_1^2}{\zeta^3(1-\alpha)^2}\sum_{k=1}^{T-1}{\frac{\beta_{k-1}}{\gamma_{k-1}}}.
\end{align*}
\end{lemma}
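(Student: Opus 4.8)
The plan is to handle $\Phi_1(x^*)$ and $\Phi_2(y^*)$ by the same device: an Abel (summation-by-parts) rearrangement that exploits the monotonicity of the weights $1/(2\gamma_k)$ together with uniform boundedness of the feasible sets. For $\Phi_1(x^*)$ the reference point $x^*$ is fixed, so the sum is a clean weighted telescoping sum. I would first write
\[
\Phi_1(x^*)=\frac{1}{2\gamma_0}\|x_0-x^*\|_2^2+\sum_{k=1}^{T-1}\left(\frac{1}{2\gamma_k}-\frac{1}{2\gamma_{k-1}}\right)\|x_k-x^*\|_2^2-\frac{1}{2\gamma_{T-1}}\|x_T-x^*\|_2^2 .
\]
Since $(\gamma_k)$ is non-increasing, every coefficient $\frac{1}{2\gamma_k}-\frac{1}{2\gamma_{k-1}}$ is nonnegative and the terminal term is nonpositive; dropping it and bounding each $\|x_k-x^*\|_2^2$ by the squared $2$-norm diameter of ${\cal X}={\cal V}^{|{\cal A}|}\times{\cal V}$ (controlled via $\|V^*\|_2\le \sqrt{|{\cal S}|}\sigma/(1-\alpha)$ and the companion bounds of \cref{lemma:bound-of-solutions}), the coefficients collapse telescopically to $1/(2\gamma_{T-1})$, which gives the first inequality.

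The harder term is $\Phi_2(y^*)$, where the comparison point $z_k:=\bar M_k^{-1}y^*$ \emph{moves} with $k$, so the naive telescoping cancellation fails. I would regroup the sum by the iterate index $j$ and split each resulting bracket as
\[
\frac{1}{2\gamma_j}\|y_j-z_j\|_2^2-\frac{1}{2\gamma_{j-1}}\|y_j-z_{j-1}\|_2^2
=\Big(\tfrac{1}{2\gamma_j}-\tfrac{1}{2\gamma_{j-1}}\Big)\|y_j-z_j\|_2^2
+\frac{1}{2\gamma_{j-1}}\Big(\|y_j-z_j\|_2^2-\|y_j-z_{j-1}\|_2^2\Big).
\]
The first (telescoping) piece is handled exactly as in $\Phi_1$ and contributes the $1/\gamma_{T-1}$ term, using the $2$-norm diameter of ${\cal Y}=({\cal L}\cap\Xi)\times{\cal M}$; here one must check $z_k\in{\cal Y}$, which holds because $\lambda^*\in{\cal L}\cap\Xi$ and $\|M_k^{-1}\lambda^*\|_\infty\le\|\eta\|_1/(\zeta(1-\alpha))$ by \cref{assump:time-varying-distribution}.

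For the second (drift) piece I would apply the identity $\|a-b\|_2^2-\|a-c\|_2^2=(b-c)^T(b+c-2a)$ with $a=y_j$, $b=z_j$, $c=z_{j-1}$, then Cauchy--Schwarz and the triangle inequality to bound it by $\frac{D_y}{\gamma_{j-1}}\|z_j-z_{j-1}\|_2$, where $D_y$ is the diameter of ${\cal Y}$. The displacement is then controlled by $z_j-z_{j-1}=(\bar M_j^{-1}-\bar M_{j-1}^{-1})y^*$, together with $\|\bar M_j^{-1}-\bar M_{j-1}^{-1}\|_2=\|M_j^{-1}-M_{j-1}^{-1}\|_2\le\beta_{j-1}$ from \cref{assump:distribution-convergence} and the fact that $y^*=(\lambda^*,\lambda^*)$ (the $M=I$ instance of \cref{proposition:dual-solution}) with $\|\lambda^*\|_2\le\|\eta\|_1/(1-\alpha)$. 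Summing over $j$ produces precisely the $\sum_{k=1}^{T-1}\beta_{k-1}/\gamma_{k-1}$ term, and collecting the constants and $\zeta$-factors from the diameter of ${\cal M}$ and from $\|y^*\|_2$ yields the stated coefficients.

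The main obstacle is exactly this moving reference point $\bar M_k^{-1}y^*$ in $\Phi_2$: because the Lyapunov comparison point changes each step, the usual telescoping cancellation is lost, and the whole argument hinges on isolating the per-step displacement $\|z_k-z_{k-1}\|_2$ and showing it is summable against $1/\gamma_{k-1}$ through \cref{assump:distribution-convergence}. Once that drift term is tamed, the rest is routine (if tedious) bookkeeping of the set diameters and the solution-norm bounds of \cref{lemma:bound-of-solutions}.
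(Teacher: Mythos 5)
Your proposal is correct and follows essentially the same route as the paper's own proof: the identical summation-by-parts rearrangement (dropping the nonpositive terminal term and telescoping the nonnegative weight increments against the boundedness of ${\cal X}$) for $\Phi_1$, and for $\Phi_2$ the identical split into a telescoping piece, which yields the $\frac{1}{\gamma_{T-1}}$ term, plus a drift piece controlled through $\|(\bar M_k^{-1}-\bar M_{k-1}^{-1})y^*\|_2\le\beta_{k-1}\|y^*\|_2$ via Cauchy--Schwarz and \cref{assump:distribution-convergence}. The only cosmetic difference is that you invoke the identity $\|a-b\|_2^2-\|a-c\|_2^2=(b-c)^T(b+c-2a)$, whereas the paper expands the squares and factors $\bar M_k^{-1}\bar M_k^{-1}-\bar M_{k-1}^{-1}\bar M_{k-1}^{-1}=(\bar M_k^{-1}-\bar M_{k-1}^{-1})(\bar M_k^{-1}+\bar M_{k-1}^{-1})$; these lead to the same Cauchy--Schwarz bound.
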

\begin{proof}
See {\bf Appendix~B}.
\end{proof}

Combining~\eqref{eq:duality-gap-bound2}
with~\cref{lemma:Phi-bounds} and using the definitions of $K_1$ and
$K_2$ in~\cref{lemma:K1-K2}, one gets
\begin{align}
D(\hat x_T,\hat y_T)&\le\frac{2\sigma^2\zeta^4 |{\cal S}|+4\|\eta\|_1^2}{\zeta^4
(1-\alpha)^2}\frac{1}{T\gamma_{T-1}} + \frac{2
\|\eta\|_1^2}{\zeta^3(1-\alpha)^2}\frac{1}{T}\sum_{k=1}^{T-1}
{\frac{\beta_{k-1}}{\gamma_{k-1}}}\nonumber\\
&+\frac{13}{2}\frac{|{\cal S}|^2 |{\cal
A}|^2(\|\eta\|_1^2+\sigma^2)}{\zeta^2(1-\alpha)^2}\,\frac{1}{T}\sum_{k=0}^{T-1}{\gamma_k}
+\frac{1}{T}{\cal G}_T.\label{eq:duality-gap-bound3}
\end{align}
From~\eqref{eq:duality-gap-bound3}, one observes that under
certain conditions, the right-hand side converges to zero except
for the last term $\frac{1}{T}{\cal G}_T$. For instance, if
$\lim_{T\to\infty} \frac{1}{T\gamma_{T-1}}=0$, $\lim_{T\to\infty}
\frac{1}{T}\sum_{k=1}^{T-1}{\frac{\beta_{k-1}}{\gamma_{k-1}}}=0$,
and $\lim_{T\to\infty} \frac{1}{T}\sum_{k=0}^{T-1}{\gamma_k}=0$,
then
\begin{align}
\limsup_{T\to\infty}D(\hat x_T,\hat y_T)
\le\limsup_{T\to\infty}\frac{1}{T}{\cal G}_T\label{eq:explain-8}
\end{align}
In particular, if we set $(\gamma_k)_{k=0}^\infty$ to be $\gamma_k
=\gamma_0/\sqrt{k+1},k\ge 0$, for some real number $\gamma_0>0$ and
$\beta_k=\beta_0/(k+1),k\ge 0$, with a real number $\beta_0>0$,
then~\eqref{eq:explain-8} holds true. In the next proposition, we simplify the right-hand side of~\eqref{eq:duality-gap-bound3}.
\begin{proposition}\label{proposition:duality-bound}
If $\gamma_k=\gamma_0/\sqrt{k+1},k\ge 0$ and
$\beta_k=\beta_0/(k+1),k \ge 0$ for a real number
$\gamma_0>0,\beta_0>0$, then
\begin{align*}
&D(\hat x_T,\hat y_T)\le \frac{C_0}{\sqrt
T}+\frac{1}{T}{\cal G}_T,
\end{align*}
where
\begin{align*}
&C_0:=\frac{2\sigma^2\zeta^4 |{\cal S}|+4\|\eta\|_1^2+2\zeta
\|\eta\|_1^2 \beta_0}{\zeta^4(1-\alpha)^2\gamma_0}+\gamma_0
\frac{13}{2}\frac{|{\cal S}|^2 |{\cal
A}|^2(\|\eta\|_1^2+\sigma^2)}{\zeta^2(1-\alpha)^2}.
\end{align*}
\end{proposition}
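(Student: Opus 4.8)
The plan is to substitute the prescribed sequences $\gamma_k=\gamma_0/\sqrt{k+1}$ and $\beta_k=\beta_0/(k+1)$ directly into the bound~\eqref{eq:duality-gap-bound3} and reduce each of its three deterministic terms to a constant multiple of $1/\sqrt{T}$, while carrying the stochastic remainder $\frac1T{\cal G}_T$ through verbatim. No property of ${\cal G}_T$ is needed at this stage, so the entire argument is a sequence of elementary estimates on partial sums; the work is entirely in bounding $\sum_k\gamma_k$ and $\sum_k\beta_{k-1}/\gamma_{k-1}$ and then collecting constants.

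First I would dispose of the leading term. Since $\gamma_{T-1}=\gamma_0/\sqrt{T}$, one has $\frac{1}{T\gamma_{T-1}}=\frac{1}{\gamma_0\sqrt{T}}$, so the first summand of~\eqref{eq:duality-gap-bound3} equals $\frac{2\sigma^2\zeta^4|{\cal S}|+4\|\eta\|_1^2}{\zeta^4(1-\alpha)^2\gamma_0}\,\frac{1}{\sqrt{T}}$, which is exactly the first group inside $C_0$ and requires no further estimation. For the step-size term I would invoke the integral comparison $\sum_{j=1}^{T}j^{-1/2}\le 2\sqrt{T}$, valid because $x\mapsto x^{-1/2}$ is decreasing, to get $\frac1T\sum_{k=0}^{T-1}\gamma_k=\frac{\gamma_0}{T}\sum_{j=1}^{T}j^{-1/2}={\cal O}\!\left(\gamma_0/\sqrt{T}\right)$; multiplying by the coefficient $\frac{13}{2}\frac{|{\cal S}|^2|{\cal A}|^2(\|\eta\|_1^2+\sigma^2)}{\zeta^2(1-\alpha)^2}$ produces the second group inside $C_0$.

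The crucial and least mechanical step is the term $\frac1T\sum_{k=1}^{T-1}\frac{\beta_{k-1}}{\gamma_{k-1}}$. Here I would first exploit the cancellation $\frac{\beta_{k-1}}{\gamma_{k-1}}=\frac{\beta_0/k}{\gamma_0/\sqrt{k}}=\frac{\beta_0}{\gamma_0}\,k^{-1/2}$, so that the same integral comparison gives $\frac1T\sum_{k=1}^{T-1}\frac{\beta_{k-1}}{\gamma_{k-1}}={\cal O}\!\left(\frac{\beta_0}{\gamma_0\sqrt{T}}\right)$; multiplying by $\frac{2\|\eta\|_1^2}{\zeta^3(1-\alpha)^2}$ yields the $\beta_0$-dependent group inside $C_0$. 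The point worth emphasizing is that this term vanishes at the $1/\sqrt{T}$ rate \emph{precisely} because the decay $\beta_k\sim 1/(k+1)$ is matched to $\gamma_k\sim 1/\sqrt{k+1}$, which makes $\beta_k/\gamma_k\sim 1/\sqrt{k+1}$ summable exactly like the step sizes themselves. A slower decay of $\beta_k$ would leave a term that fails to decay at rate $1/\sqrt{T}$, so this is the mechanism that forces the assumed scaling $\beta_k=\beta_0/(k+1)$.

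Finally I would collect the three estimates, factor out the common $1/\sqrt{T}$, absorb the resulting constants into the stated $C_0$, and leave $\frac1T{\cal G}_T$ as the only non-deterministic remainder, giving $D(\hat x_T,\hat y_T)\le C_0/\sqrt{T}+\frac1T{\cal G}_T$. I expect the only friction to be the bookkeeping of the $\zeta$-powers and the reconciliation of the numerical constant coming out of the harmonic-sum bound $\sum_{j=1}^T j^{-1/2}$ with the precise coefficients appearing in $C_0$; the substitution $\beta_{k-1}/\gamma_{k-1}=(\beta_0/\gamma_0)\,k^{-1/2}$ is the single idea that drives the whole computation.
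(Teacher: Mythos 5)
Your proposal is correct and takes essentially the same route as the paper's own proof: substitute $\gamma_k=\gamma_0/\sqrt{k+1}$ and $\beta_k=\beta_0/(k+1)$ into~\eqref{eq:duality-gap-bound3}, note that $\beta_{k-1}/\gamma_{k-1}=(\beta_0/\gamma_0)k^{-1/2}$, bound both $\frac{1}{T}\sum_{k=0}^{T-1}\gamma_k$ and $\frac{1}{T}\sum_{k=1}^{T-1}\beta_{k-1}/\gamma_{k-1}$ by integral comparison of $\sum_k k^{-1/2}$, and carry $\frac{1}{T}{\cal G}_T$ through untouched. The ``reconciliation of constants'' you anticipated is a real discrepancy but it lies in the paper, not in your argument: the paper evaluates $\int_0^T t^{-1/2}\,dt$ as $\sqrt{T}$ instead of $2\sqrt{T}$, so your correct bound $\sum_{j=1}^{T}j^{-1/2}\le 2\sqrt{T}$ yields a $C_0$ whose $\beta_0$-group and $\gamma_0$-group are twice those stated in the proposition, which affects only the constant and not the ${\cal O}(1/\sqrt{T})$ rate used downstream.
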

\begin{proof}
The result can be proved by estimating convergence rates of the
upper bounds on the three terms,
$1/(T\gamma_{T-1}),\sum_{k=0}^{T-1}{\gamma_k/T}$, and
$\sum_{k=1}^{T-1}{\beta_{k-1}/(\gamma_{k-1} T)}$,
in~\eqref{eq:duality-gap-bound3}. Plugging
$\gamma_k=\gamma_0/\sqrt{k+1}$ into the first two terms, we have
$1/(T\gamma_{T-1})=1/(\sqrt T \gamma_0)$ and
\begin{align*}
&\frac{1}{T}\sum_{k=0}^{T-1}{\gamma_k}=\frac{\gamma_0}{T}\sum_{k=1}^T
{\frac{1}{\sqrt k}} \le \frac{\gamma_0}{T}\int_0^T {\frac{1}{\sqrt
t}dt}= \frac{\gamma_0 \sqrt T}{T} = \frac{\gamma_0}{\sqrt T}.
\end{align*}
Similarly, using the definitions
$\gamma_k=\gamma_0/\sqrt{k+1},\beta_k=\gamma_0/\sqrt{k+1}$ in the
last term leads to
\begin{align*}
&\frac{1}{T}\sum_{k=1}^{T-1}{\frac{\beta_{k-1}}{\gamma_{k-1}}}=\frac{\beta_0}{T\gamma_0}\sum_{k=0}^{T-2}
{\frac{\sqrt{k+1}}{k+1}}=\frac{\beta_0}{T\gamma_0}\sum_{k=1}^{T-1}
{\frac{1}{\sqrt k}}  \le \frac{\beta_0}{T\gamma_0}\int_0^{T-1}
{\frac{1}{\sqrt t}dt} = \frac{\beta_0\sqrt{T-1}}{T\gamma_0} \le
\frac{\beta_0}{\gamma_0 \sqrt T}.
\end{align*}
Combining these results with~\eqref{eq:duality-gap-bound3}, we
have the desired result.
\end{proof}

From~\cref{proposition:duality-bound}, we have $\limsup_{T\to\infty}D(\hat x_T,\hat y_T)
\le\limsup_{T\to\infty}\frac{1}{T}{\cal G}_T$. Now, we focus on the last term, ${\cal G}_T/T$,
in~\eqref{eq:duality-gap-bound3}. Compared to the other terms
in~\eqref{eq:duality-gap-bound3}, proving the
boundedness of ${\cal G}_T/T$ is not straightforward. Therefore, we will use the
properties of Martingale sequence and the concentration
inequalities~\citep{bercu2015concentration} to prove the convergence as
in~\citet{chen2016stochastic}. To do so,
first define ${\cal E}_k :={\cal E}_k^{(1)}(x^*)+{\cal
E}_k^{(2)}(\bar M_k^{-1} y^*)$, where ${\cal E}_k^{(1)}(x):=\|x_k - x\|_2^2$, and ${\cal E}_k^{(2)}(y):= \|y_k-y\|_2^2$. Then, ${\cal G}_T$ is written by
\begin{align*}
&{\cal G}_T:=\sum_{k=0}^{T-1}{\frac{1}{2\gamma_k}({\cal
E}_{k+1}-{\mathbb E}[{\cal E}_{k+1}|{\cal F}_k])},
\end{align*}
where ${\cal F}_k:=\sigma(\varepsilon_0,\ldots,\varepsilon_{k-1},\xi_0,\ldots,\xi_{k-1},x_0,\ldots,x_k,y_0,\ldots,y_k)$. By the construction of ${\cal G}_T$, one easily proves that $({\cal
G}_T)_{T=0}^\infty$ with ${\cal G}_0=0$ is a Martingale, i.e.,
${\mathbb E}[{\cal G}_{t+1}|{\cal F}_t]={\cal G}_t$ holds as
\begin{align*}
{\mathbb E}[{\cal G}_{T+1}|{\cal F}_T]&={\mathbb E}\left[ \left.
\sum_{k=0}^T {\frac{1}{2\gamma_k}({\cal E}_{k+1}- {\mathbb
E}[{\cal E}_{k+1}|{\cal F}_k])} \right|{\cal F}_T
\right]\\
&={\mathbb E}\left[ \left. \frac{1}{2\gamma_k}({\cal
E}_{T+1}-{\mathbb E}[{\cal E}_{T+1}|{\cal F}_T])\right|{\cal F}_T\right] +{\mathbb E}\left[ \left. \sum_{k=0}^{T-1} {\frac{1}{2\gamma_k}({\cal E}_{k+1}- {\mathbb E}[{\cal E}_{k+1}|{\cal F}_k])} \right|{\cal F}_T\right]\\
&= {\mathbb E} \left[ \left. \sum_{k=0}^{T-1}
{\frac{1}{2\gamma_k}({\cal E}_{k+1}- {\mathbb E}[{\cal E}_{k+1}
|{\cal F}_k])}\right|{\cal F}_T\right]={\cal G}_T.
\end{align*}

The next step is to use the Berstein inequality for
Martingales~\citep{freedman1975tail,fan2012hoeffding,bercu2015concentration}
to prove the convergence of ${\cal G}_T/T$,
in~\eqref{eq:duality-gap-bound3}. For completeness of the presentation, the Berstein inequality is formally
stated in the following lemma.
\begin{lemma}[Berstein inequality for
Martingales~\citep{bercu2015concentration}]\label{lemma:Bernstein-inequality}
Let $({\cal G}_T)_{T=0}^\infty$ be a square integrable martingale
such that ${\cal G}_0=0$. Assume that $\Delta {\cal G}_T\le
b,\forall T \ge 1$ with probability one, where $b>0$ is a real
number and $\Delta{\cal G}_T$ is the Martingale difference defined
as $\Delta{\cal G}_T={\cal G}_T-{\cal G}_{T-1},T \ge 1$. Then, for
any $\varepsilon\in [0,b]$ and $a>0$,
\begin{align}
&{\mathbb P} \left[ \frac{1}{T}{\cal G}_T\ge\varepsilon
,\frac{1}{T} \langle {\cal G}\rangle_T  \le a\right]
\le\exp\left(-\frac{T\varepsilon^2}{2(a+b\varepsilon/3)}\right),\label{eq:Berstein}
\end{align}
where
\begin{align*}
&\langle {\cal G} \rangle_T:=\sum_{k=0}^{T-1}{{\mathbb E}[({\cal
G}_{k+1}-{\cal G}_k)^2|{\cal F}_k]}=\sum_{k=0}^{T-1}{{\mathbb
E}[\Delta {\cal G}_{k+1}^2|{\cal F}_k]}.
\end{align*}
\end{lemma}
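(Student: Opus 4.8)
The plan is to run a Chernoff-type (exponential) argument adapted to martingales, in the classical Freedman style, exploiting the one-sided increment bound $\Delta{\cal G}_{k+1}\le b$ to control the conditional moment generating function. Write $\psi(u):=e^u-1-u$ and recall the elementary fact that $u\mapsto \psi(u)/u^2$ is nondecreasing on ${\mathbb R}$. Since $({\cal G}_T)$ is a martingale we have ${\mathbb E}[\Delta{\cal G}_{k+1}\,|\,{\cal F}_k]=0$, and because $\Delta{\cal G}_{k+1}\le b$ we get $\theta\Delta{\cal G}_{k+1}\le \theta b$ for every $\theta>0$; monotonicity then gives the pointwise bound $\psi(\theta\Delta{\cal G}_{k+1})\le (\Delta{\cal G}_{k+1})^2\,\psi(\theta b)/b^2$. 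Taking conditional expectations and using $1+t\le e^t$ yields the key log-MGF estimate
\begin{align*}
&{\mathbb E}[e^{\theta\Delta{\cal G}_{k+1}}\,|\,{\cal F}_k]\le \exp\left(\frac{\psi(\theta b)}{b^2}\,{\mathbb E}[(\Delta{\cal G}_{k+1})^2\,|\,{\cal F}_k]\right),\quad \forall\,\theta>0.
\end{align*}

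Next I would build the exponential process $Z_T:=\exp\!\left(\theta{\cal G}_T-\frac{\psi(\theta b)}{b^2}\langle{\cal G}\rangle_T\right)$, with $Z_0=1$. Because each summand ${\mathbb E}[(\Delta{\cal G}_{k+1})^2\,|\,{\cal F}_k]$ is ${\cal F}_k$-measurable, the increment $\langle{\cal G}\rangle_{T+1}-\langle{\cal G}\rangle_T={\mathbb E}[(\Delta{\cal G}_{T+1})^2\,|\,{\cal F}_T]$ is ${\cal F}_T$-measurable and can be pulled out of the conditional expectation; combined with the log-MGF bound this shows ${\mathbb E}[Z_{T+1}\,|\,{\cal F}_T]\le Z_T$, so $(Z_T)$ is a nonnegative supermartingale with ${\mathbb E}[Z_T]\le 1$. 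On the event $A:=\{{\cal G}_T\ge T\varepsilon,\ \langle{\cal G}\rangle_T\le Ta\}$ one has $Z_T\ge \exp(\theta T\varepsilon-\psi(\theta b)Ta/b^2)$, so multiplying ${\bf 1}_A$ by this lower bound and taking expectations gives the Chernoff bound
\begin{align*}
&{\mathbb P}[A]\le \exp\left(-\theta T\varepsilon+\frac{\psi(\theta b)}{b^2}Ta\right),\quad \forall\,\theta>0.
\end{align*}

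It remains to optimize over $\theta$. Differentiating the exponent shows the minimizer is $\theta b=\ln(1+b\varepsilon/a)$; substituting and writing $x:=b\varepsilon/a$ collapses the bound to Bennett's form ${\mathbb P}[A]\le \exp\!\left(-\frac{Ta}{b^2}h(x)\right)$ with $h(x)=(1+x)\ln(1+x)-x$. Finally, invoking the elementary inequality $h(x)\ge \frac{x^2}{2(1+x/3)}$ valid for $x\ge 0$ and simplifying $\frac{a}{b^2}\cdot\frac{x^2}{2(1+x/3)}=\frac{\varepsilon^2}{2(a+b\varepsilon/3)}$ delivers exactly~\eqref{eq:Berstein}.

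The technical heart — and the main obstacle — is the conditional log-MGF estimate together with the supermartingale construction: the one-sided boundedness $\Delta{\cal G}_{k+1}\le b$ is precisely what makes the monotonicity of $\psi(u)/u^2$ usable for positive $\theta$, and the ${\cal F}_k$-measurability (predictability) of $\langle{\cal G}\rangle_T$ is exactly what is needed for the supermartingale step to go through. The $\theta$-optimization and the Bennett-to-Bernstein conversion via the inequality for $h$ are then routine calculus once the supermartingale is in hand.
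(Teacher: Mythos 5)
Your proof is correct, but there is nothing in the paper to compare it against: the paper never proves this lemma. It is imported as a known result from the cited monograph of Bercu, Delyon, and Rio (2015), and the paper's actual work consists only of verifying its hypotheses --- the increment bound $b$ in \cref{lemma:bound1} and the bracket bound $a$ in \cref{lemma:bound2} --- before invoking it in the proof of \cref{proposition:duality-gap-convergence}. What you supply is the standard self-contained Freedman-type derivation, and every step checks out: the one-sided bound $\Delta{\cal G}_{k+1}\le b$ together with the monotonicity of $u\mapsto (e^u-1-u)/u^2$ yields $\psi(\theta\Delta{\cal G}_{k+1})\le (\Delta{\cal G}_{k+1})^2\psi(\theta b)/b^2$ for $\theta>0$; the martingale property annihilates the linear term in $e^{\theta\Delta}=1+\theta\Delta+\psi(\theta\Delta)$, giving the conditional log-MGF bound; the ${\cal F}_T$-measurability (predictability) of the bracket increment is exactly what makes $Z_T=\exp\left(\theta{\cal G}_T-\psi(\theta b)b^{-2}\langle{\cal G}\rangle_T\right)$ a nonnegative supermartingale with ${\mathbb E}[Z_T]\le 1$; and Chernoff on the joint event, the optimizer $\theta b=\ln(1+b\varepsilon/a)$, and the elementary inequality $(1+x)\ln(1+x)-x\ge x^2/(2(1+x/3))$ produce exactly the exponent $-T\varepsilon^2/(2(a+b\varepsilon/3))$. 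Two remarks: your argument needs only the one-sided increment bound, which matches the paper's hypothesis (this is precisely where the monotonicity of $\psi(u)/u^2$ earns its keep, as you note), and it in fact proves the inequality for all $\varepsilon\ge 0$, so the restriction $\varepsilon\in[0,b]$ in the statement is never used by your derivation. What your route buys over the paper's citation is self-containedness, at the cost of reproducing a classical argument --- a reasonable trade for a result the paper treats as a black box.
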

To apply~\cref{lemma:Bernstein-inequality}, we first prove that
the martingale difference for $T\ge 1$
\begin{align*}
&\Delta{\cal G}_T :={\cal G}_T-{\cal G}_{T-1}=
\frac{1}{2\gamma_{T-1}}({\cal E}_T-{\mathbb E}[{\cal E}_T|{\cal
F}_{T-1}])
\end{align*}
is bounded by a real number $b>0$.
\begin{lemma}\label{lemma:bound1}
We have $\Delta {\cal G}_{T+1}={\cal G}_{T+1}-{\cal
G}_T=\frac{1}{2\gamma_T}({\cal E}_{T+1}-{\mathbb E}[{\cal
E}_{T+1}|{\cal F}_T])\le b$ with probability one, where
\begin{align*}
&b=\frac{13\gamma_0 \|\eta\|_1+
13\gamma_0\zeta^2\sigma^2+16\sqrt{26}\sigma\zeta
\|\eta\|_1}{2}\frac{|{\cal S}|^2 |{\cal
A}|^2}{\zeta^2(1-\alpha)^2}.
\end{align*}
\end{lemma}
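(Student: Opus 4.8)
The plan is to exploit that, in $\Delta\mathcal{G}_{T+1}=\frac{1}{2\gamma_T}(\mathcal{E}_{T+1}-\mathbb{E}[\mathcal{E}_{T+1}\mid\mathcal{F}_T])$, the reference point $\bar M_{T+1}^{-1}y^*$ entering $\mathcal{E}_{T+1}=\|x_{T+1}-x^*\|_2^2+\|y_{T+1}-\bar M_{T+1}^{-1}y^*\|_2^2$ is deterministic, hence $\mathcal{F}_T$-measurable, so it appears identically in $\mathcal{E}_{T+1}$ and in $\mathbb{E}[\mathcal{E}_{T+1}\mid\mathcal{F}_T]$ and never produces a term depending on $\bar M_T^{-1}-\bar M_{T+1}^{-1}$ (i.e.\ on $\beta_T$). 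First I would record the one-step displacement bounds $\|x_{T+1}-x_T\|_2\le\gamma_T K_1$ and $\|y_{T+1}-y_T\|_2\le\gamma_T K_2$, which follow from nonexpansiveness of the Euclidean projection together with $x_T\in\mathcal{X}$, $y_T\in\mathcal{Y}$ and the stochastic-gradient bounds of \cref{lemma:K1-K2}; for instance $\|x_{T+1}-x_T\|_2=\|\Pi_{\mathcal{X}}(x_T-\gamma_T(\nabla_x L_{M_T}(x_T,y_T)+\varepsilon_T))-\Pi_{\mathcal{X}}(x_T)\|_2\le\gamma_T\|\nabla_x L_{M_T}(x_T,y_T)+\varepsilon_T\|_2\le\gamma_T K_1$.

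Writing $d_T^x:=x_{T+1}-x_T$, $d_T^y:=y_{T+1}-y_T$ and $r:=\bar M_{T+1}^{-1}y^*$, I would expand the squared norms and subtract the conditional mean. Because $x_T-x^*$, $y_T-r$ and the base terms $\|x_T-x^*\|_2^2$, $\|y_T-r\|_2^2$ are all $\mathcal{F}_T$-measurable, they cancel, leaving
\begin{align*}
\mathcal{E}_{T+1}-\mathbb{E}[\mathcal{E}_{T+1}\mid\mathcal{F}_T]
&=2\langle d_T^x-\mathbb{E}[d_T^x\mid\mathcal{F}_T],\,x_T-x^*\rangle
+2\langle d_T^y-\mathbb{E}[d_T^y\mid\mathcal{F}_T],\,y_T-r\rangle\\
&\quad+\big(\|d_T^x\|_2^2-\mathbb{E}[\|d_T^x\|_2^2\mid\mathcal{F}_T]\big)
+\big(\|d_T^y\|_2^2-\mathbb{E}[\|d_T^y\|_2^2\mid\mathcal{F}_T]\big).
\end{align*}
Crucially no term involving $\beta_T$ survives, which is exactly why the asserted $b$ is independent of $\beta_0$.

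It then remains to bound each piece. By Cauchy--Schwarz and $\|d_T^x-\mathbb{E}[d_T^x\mid\mathcal{F}_T]\|_2\le 2\gamma_T K_1$ (from $\|d_T^x\|_2\le\gamma_T K_1$ and Jensen), the first cross term is at most $4\gamma_T K_1 D_x$, where $D_x:=\sup_{x\in\mathcal{X}}\|x-x^*\|_2$; likewise the second is at most $4\gamma_T K_2 D_y$ with $D_y:=\sup_{y\in\mathcal{Y}}\|y-r\|_2$. Both $D_x$ and $D_y$ are finite and bounded through the radii defining $\mathcal{V},\mathcal{L},\mathcal{M}$ and the solution bounds of \cref{lemma:bound-of-solutions} (using $\|M_{T+1}^{-1}\mu^*\|_2\le\zeta^{-1}\|\mu^*\|_2$ for the $\mu$-block of $r$, and, where needed, $\|\cdot\|_2\le\|\cdot\|_1$ to produce the stated $|{\cal S}|^2|{\cal A}|^2$ scaling). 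The two curvature differences are bounded above by dropping their nonnegative conditional-mean terms, giving $\|d_T^x\|_2^2\le\gamma_T^2K_1^2$ and $\|d_T^y\|_2^2\le\gamma_T^2K_2^2$. Dividing by $2\gamma_T$ yields
\begin{align*}
\Delta\mathcal{G}_{T+1}\le 2K_1D_x+2K_2D_y+\tfrac{\gamma_T}{2}(K_1^2+K_2^2),
\end{align*}
and substituting $\gamma_T\le\gamma_0$ together with the explicit values of $K_1,K_2$ from \cref{lemma:K1-K2} and of $D_x,D_y$ collapses the right-hand side into the claimed constant $b$.

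The genuinely delicate point is the first one: recognizing that the martingale difference compares $\mathcal{E}_{T+1}$ with its own one-step conditional mean at the frozen reference $\bar M_{T+1}^{-1}y^*$, so the drift of this reference induced by the time-varying $M_k$ contributes nothing to $\Delta\mathcal{G}_{T+1}$ and $b$ stays uniform in $T$. The remaining work is bookkeeping---pinning down $D_x,D_y$ and tracking the numerical constants (the $\sqrt{26}$ factor and the $|{\cal S}|^2|{\cal A}|^2$ powers) so that the combination of the $\mathcal{O}(\gamma_T)$ cross terms and the $\mathcal{O}(\gamma_T^2)$ curvature terms reproduces $b$ exactly after division by $2\gamma_T$.
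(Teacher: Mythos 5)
Your proposal is correct and takes essentially the same route as the paper's Appendix~C proof: both expand the squared distances around the current iterate so that the ${\cal F}_T$-measurable base terms cancel, bound the one-step displacement by $\gamma_T K_1$ (resp.\ $\gamma_T K_2$) via nonexpansiveness of the projection together with \cref{lemma:K1-K2}, drop the nonpositive conditional-expectation terms, and apply Cauchy--Schwarz with the diameter bounds coming from ${\cal V},{\cal L},{\cal M}$ and \cref{lemma:bound-of-solutions}; your intermediate bound $2K_1D_x+2K_2D_y+\tfrac{\gamma_T}{2}(K_1^2+K_2^2)$ is exactly the sum of the paper's two per-block bounds. The only differences are cosmetic (displacement-vector notation and a single combined cross term $\|d_T-\mathbb{E}[d_T\mid{\cal F}_T]\|_2\le 2\gamma_T K$ instead of the paper's two separate Cauchy--Schwarz applications), and your observation that the frozen reference $\bar M_{T+1}^{-1}y^*$ keeps $b$ free of $\beta_0$ is a valid, explicit statement of what the paper leaves implicit.
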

\begin{proof}
See~{\bf Appendix~C}.
\end{proof}

Similarly, we can prove that there exists a real number $a >0$ such that $\frac{1}{T} \langle {\cal G} \rangle_T\le a $ holds with probability one so as to remove the condition $\frac{1}{T} \langle {\cal G} \rangle_T\le a $ in~\eqref{eq:Berstein}.
\begin{lemma}\label{lemma:bound2}
$\frac{1}{T} \langle {\cal G} \rangle_T\le a $ holds with
probability one, where
\begin{align*}
&a=\frac{1}{4}\frac{(\gamma_0(13 \|\eta\|_1 +
4\sqrt{26}\sigma\zeta)\|\eta\|_1+13\gamma_0\zeta^2\sigma^2+4\sqrt{26}\sigma
\|\eta\|_1)^2 |{\cal S}|^4 |{\cal A}|^4}{\zeta^4(1-\alpha)^4}.
\end{align*}
\end{lemma}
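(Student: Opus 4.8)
The goal is to show that the predictable quadratic variation $\langle\mathcal{G}\rangle_T = \sum_{k=0}^{T-1}\mathbb{E}[\Delta\mathcal{G}_{k+1}^2|\mathcal{F}_k]$, when divided by $T$, is bounded by a deterministic constant $a$ with probability one. Let me sketch the approach.

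Let me think about what $\Delta\mathcal{G}_{k+1}$ looks like and how to bound its conditional second moment.
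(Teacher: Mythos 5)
Your proposal contains no proof: it restates the definition of $\langle {\cal G}\rangle_T$ and announces the goal, then stops at ``let me think about what $\Delta {\cal G}_{k+1}$ looks like.'' Nothing is bounded, so there is nothing to check against the paper's argument. The entire content of the lemma is missing.

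The key difficulty you would have to face is that $\Delta {\cal G}_{k+1} = \frac{1}{2\gamma_k}({\cal E}_{k+1} - {\mathbb E}[{\cal E}_{k+1}|{\cal F}_k])$ carries a factor $1/\gamma_k$, which blows up as $k \to \infty$ (since $\gamma_k = \gamma_0/\sqrt{k+1}$), so a naive bound of the form $|\Delta {\cal G}_{k+1}| \le C/\gamma_k$ would give $\langle {\cal G}\rangle_T/T \to \infty$. The paper's proof resolves this as follows. First, since ${\cal E}_k$ is ${\cal F}_k$-measurable, one can center by ${\cal E}_k$ and use the fact that a conditional variance is bounded by the conditional second moment:
\begin{align*}
{\mathbb E}[|{\cal G}_{k+1}-{\cal G}_k|^2\,|\,{\cal F}_k]
\le \frac{1}{4\gamma_k^2}\,{\mathbb E}[|{\cal E}_{k+1}-{\cal E}_k|^2\,|\,{\cal F}_k].
\end{align*}
Second, and this is the crucial step, the one-step change ${\cal E}_{k+1}-{\cal E}_k$ is itself of order $\gamma_k$: expanding the squared distances and using the non-expansiveness of the projections, the difference of the primal terms is bounded by $\gamma_k^2\|\nabla_x L_{M_k}(x_k,y_k)+\varepsilon_k\|_2^2 + 2\gamma_k\|\nabla_x L_{M_k}(x_k,y_k)+\varepsilon_k\|_2\,\|x_k-x^*\|_2$, and similarly for the dual terms. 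The stochastic gradients are uniformly bounded by $K_1,K_2$ (\cref{lemma:K1-K2}) and the iterates live in the compact sets ${\cal X},{\cal Y}$, so $|{\cal E}_{k+1}-{\cal E}_k|\le \gamma_k\cdot C$ for an explicit constant $C$ involving $K_1,K_2$, the set diameters, and $\gamma_k\le\gamma_0$. The factor $\gamma_k^2$ then cancels the $1/(4\gamma_k^2)$ prefactor, yielding a $k$-independent bound $a$ on each term ${\mathbb E}[|{\cal G}_{k+1}-{\cal G}_k|^2|{\cal F}_k]$, hence $\langle{\cal G}\rangle_T/T\le a$ almost surely. Without this cancellation mechanism, the claim cannot be established, and your sketch gives no indication of it.
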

\begin{proof}
See~{\bf Appendix~D}.
\end{proof}

From the series of results derived so far, we collected all useful ingredients to prove the convergence of~\cref{algo:primal-dual-method}. Now, details of the proof of~\cref{proposition:duality-gap-convergence} are given in the next subsection.

\subsection{Proof of~\cref{proposition:duality-gap-convergence}}
We apply the Bernstein inequality
in~\cref{lemma:Bernstein-inequality} with $a$ in~\cref{lemma:bound2}
and $b$ in~\cref{lemma:bound1} to prove
\begin{align*}
&{\mathbb P}\left[ \frac{1}{T}{\cal
G}_T\ge\beta\varepsilon,\frac{1}{T}\langle {\cal G}\rangle_T\le a
\right]= {\mathbb P}\left[ \frac{1}{T}{\cal G}_T \ge \beta
\varepsilon \right] \le \exp\left(
-\frac{T\beta^2\varepsilon^2}{2(a+b\beta\varepsilon/3)}\right)
\end{align*}
with any $\beta\in (0,1)$ and $\varepsilon>0$.
By~\cref{lemma:Bernstein-inequality}, for any $\delta\in (0,1)$,
$\exp\left(-\frac{T\beta^2\varepsilon^2}{2(a+b\beta\varepsilon/3)}\right)
\le \delta$ holds if and only if $T \ge
\frac{2(a+b\beta\varepsilon/3)}{\beta^2\varepsilon^2}\ln
(\delta^{-1})$. Therefore, if $\exp\left(-\frac{T\beta^2
\varepsilon^2}{2(a+b\beta\varepsilon/3)}\right) \le\delta$, then
with probability at least $1-\delta$, we have ${\cal
G}_T/T\le\beta\varepsilon$, which in combination
with~\cref{proposition:duality-bound} implies $D(\hat
x_T,\hat y_T)\le C_0/\sqrt T+\beta\varepsilon$.
With algebraic manipulations, one proves
$\Psi_1=\frac{2(a+b\beta\varepsilon/3)}{\beta^2\varepsilon^2}\ln(\delta^{-1})$.
Similarly, $C_0\le\varepsilon(1-\beta)$ holds if and only if $T\ge
\frac{C_0^2}{\varepsilon^2(1-\beta)^2}$. If $\delta\in (0,1/e)$,
then $\ln(1/\delta)\ge 1$, and the last inequality holds if $T\ge
\frac{C_0^2}{\varepsilon^2(1-\beta)^2}\ln (\delta^{-1})$. Plugging
$C_0$ in~\cref{proposition:duality-bound} into
$\frac{C_0^2}{\varepsilon^2(1-\beta)^2}\ln (\delta^{-1})$ and
after algebraic simplifications, one gets
$\Psi_2=\frac{C_0^2}{\varepsilon^2(1-\beta)^2}\ln(\delta^{-1})$. Therefore, if $T\ge\max\{\Psi_1,\Psi_2\}$, then with probability at least $1-\delta$, $D(\hat x_T,\hat y_T)\le\varepsilon$ holds. The desired conclusion is obtained by setting $\beta = 1/2$, $\eta = \frac{\sigma}{|{\cal S}|} \bf{1}_{|{\cal S}|}$, and after algebraic simplifications.

\subsection{Proof of~\cref{proposition:dual-policy-convergence}}

Lastly, the proof of~\cref{proposition:dual-policy-convergence} is given in this subsection. Before beginning the proof, we first derive an equivalent formulation of the duality gap $D$ in~\cref{proposition:duality-gap-convergence}.
\begin{lemma}\label{lemma:duality-gap-representation}
We have
\begin{align*}
&D(\hat x_T,\hat y_T)= L_I(\hat x_T,y^*)-L_I(x^*,\hat y_T)=\sum_{a'\in {\cal A}} {\hat \lambda_{a',T}^T}(I-\alpha P^{\hat\pi_T^d})(V^*-V^{\hat \pi_T^d}),
\end{align*}
where $V^{\hat\pi_T^d}$ is the value function corresponding to the dual policy~\eqref{eq:dual-policy}.
\end{lemma}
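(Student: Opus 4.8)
The plan is to expand the pseudo duality gap directly from its definition, $D(\hat x_T,\hat y_T)=L_I(\hat x_T,y^*)-L_I(x^*,\hat y_T)$, exploiting the affine dependence of $L_I$ in~\eqref{eq:Lagrangian-LP-form-Q-learning} on each block of variables, and then to translate the resulting Bellman residual evaluated at $V^*$ into the dual-policy transition matrix $P^{\hat\pi_T^d}$ and its value function $V^{\hat\pi_T^d}$. The first equality is just the definition of $D$ at the point $(\hat x_T,\hat y_T)$, so the real work lies in simplifying the two Lagrangian evaluations and in the final policy-based rewriting.

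For the term $L_I(\hat x_T,y^*)$, I would use that $y^*=(\lambda^*,\mu^*)$ with $\mu^*=\lambda^*$ (the lemma following~\eqref{eq:dual-LP-form-Q-learning}) and that $\lambda^*$ is dual feasible. Collecting the terms of $L_I(\cdot,y^*)$ that are linear in $Q$ and in $V$, their coefficients are $\lambda^*-\mu^*=0$ and $\eta+\alpha P^T\mu^*-({\bf 1}_{|{\cal A}|}\otimes I_{|{\cal S}|})^T\lambda^*=0$, respectively, so $L_I(\cdot,y^*)$ is constant in its first argument; hence $L_I(\hat x_T,y^*)=L_I(x^*,y^*)=\eta^T V^*$, the last step using complementary slackness $(\lambda^*)^T(Q^*-({\bf 1}_{|{\cal A}|}\otimes I_{|{\cal S}|})V^*)=0$. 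For $L_I(x^*,\hat y_T)$, the primal equality $Q^*=\alpha PV^*+R$ makes the entire $\mu$-term vanish, leaving $L_I(x^*,\hat y_T)=\eta^T V^*+\hat\lambda_T^T(Q^*-({\bf 1}_{|{\cal A}|}\otimes I_{|{\cal S}|})V^*)$. Subtracting and substituting $Q_a^*=\alpha P_aV^*+R_a$ block by block yields the intermediate identity $D(\hat x_T,\hat y_T)=\sum_{a\in{\cal A}}\hat\lambda_{a,T}^T\bigl((I-\alpha P_a)V^*-R_a\bigr)$.

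The crux is the third step: rewriting this per-action sum in policy form. The key observation is that the dual policy~\eqref{eq:dual-policy} factorizes the averaged dual iterate as $\hat\lambda_{a,T}(s)=w(s)\,\hat\pi_T^d(s)(a)$, where $w:=\sum_{a'\in{\cal A}}\hat\lambda_{a',T}$ has entries $w(s)$. Substituting this factorization, exchanging the order of summation, and using $\sum_{a}\hat\pi_T^d(s)(a)=1$ together with the definitions of $P^{\hat\pi_T^d}$ in~\eqref{eq:def:P-mu} and of $R_{\hat\pi_T^d}$ in~\eqref{eq:def:R-mu}, the sum collapses to $w^T\bigl[(I-\alpha P^{\hat\pi_T^d})V^*-R_{\hat\pi_T^d}\bigr]$. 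Invoking the Bellman equation $R_{\hat\pi_T^d}=(I-\alpha P^{\hat\pi_T^d})V^{\hat\pi_T^d}$ for the value function of $\hat\pi_T^d$ then gives $w^T(I-\alpha P^{\hat\pi_T^d})(V^*-V^{\hat\pi_T^d})=\sum_{a'\in{\cal A}}\hat\lambda_{a',T}^T(I-\alpha P^{\hat\pi_T^d})(V^*-V^{\hat\pi_T^d})$, which is the claim.

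I expect the main obstacle to be this last step, and specifically making the factorization $\hat\lambda_{a,T}(s)=w(s)\,\hat\pi_T^d(s)(a)$ rigorous. This requires $w(s)>0$ for every $s$ so that $\hat\pi_T^d$ is a bona fide stochastic policy; this is exactly where membership $\hat\lambda_T\in\Xi$ is needed, since then $\sum_a\hat\lambda_{a,T}\ge\eta>0$ (recall $\eta=\frac{\sigma}{|{\cal S}|}{\bf 1}_{|{\cal S}|}$, and that ${\cal L}\cap\Xi$ is convex so the average of the projected iterates remains in it). I would flag this positivity explicitly, after which the remaining manipulations are routine linear algebra.
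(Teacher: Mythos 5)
Your proof is correct, and it takes a genuinely shorter route through the first half of the argument than the paper does. The paper expands both $L_I(\hat x_T,y^*)$ and $L_I(x^*,\hat y_T)$ and carries the terms $\eta^T(\hat V_T-V^*)$ and $(\lambda^*)^T(\alpha P\hat V_T+R-({\bf 1}_{|{\cal A}|}\otimes I_{|{\cal S}|})\hat V_T)$ all the way to the end: it factorizes $\lambda^*$ through the optimal policy $\pi^*$ (mirroring the dual-policy factorization of $\hat\lambda_T$), invokes the Bellman identity $R_{\pi^*}=(I-\alpha P_{\pi^*})V^*$, and finally cancels the residual against $\eta^T(\hat V_T-V^*)$ using \cref{corollary:uniqueness-of-dual-LP-DP}. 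You instead eliminate the entire $\hat x_T$-dependence at the outset by observing that dual feasibility ($\mu^*=\lambda^*$ together with $\eta+\alpha P^T\mu^*=({\bf 1}_{|{\cal A}|}\otimes I_{|{\cal S}|})^T\lambda^*$) makes $L_I(\cdot,y^*)$ constant in the primal argument, so that $L_I(\hat x_T,y^*)=L_I(x^*,y^*)=\eta^TV^*$; this removes any need for the $\pi^*$-factorization, the optimal Bellman equation, and the corollary --- your vector-form use of dual feasibility is exactly what that corollary encodes in policy form. From the intermediate identity $D(\hat x_T,\hat y_T)=\sum_{a\in{\cal A}}\hat\lambda_{a,T}^T\bigl((I-\alpha P_a)V^*-R_a\bigr)$ onward, your argument coincides with the paper's: factorize $\hat\lambda_{a,T}(s)=w(s)\,\hat\pi_T^d(s)(a)$, collapse the sum via the definitions \eqref{eq:def:P-mu} and \eqref{eq:def:R-mu}, and apply the Bellman equation $R_{\hat\pi_T^d}=(I-\alpha P_{\hat\pi_T^d})V^{\hat\pi_T^d}$. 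A further small merit of your write-up is that you make explicit the positivity $\sum_{a'}\hat\lambda_{a',T}(s)\ge\eta(s)>0$ needed for the dual policy to be well defined; the paper divides by this quantity without comment. (One pedantic caveat, shared equally by the paper's own proof: $\hat\lambda_T$ averages $\lambda^{(0)},\ldots,\lambda^{(T-1)}$, and the initialization $\lambda^{(0)}$ is only guaranteed to lie in ${\cal L}$, not in $\Xi$, so strictly one should either initialize $\lambda^{(0)}\in{\cal L}\cap\Xi$ or start the average at $k=1$.)
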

\begin{proof}
The proof is completed by the equalities
\begin{align}
&L_I(\hat x_T,y^*)-L_I(x^*,\hat y_T)\nonumber\\
&= L_I(\hat Q_T,\hat V_T,\lambda^*,\mu^*)-L_I(Q^*,V^*,\hat\lambda_T,\hat\mu_T)\nonumber\\
&=\eta^T\hat V_T+(\mu^*)^T(\alpha P\hat V_T+R-\hat
Q_T)+(\lambda^*)^T (\hat Q_T - ({\bf 1}_{|{\cal A}|}
\otimes I_{|{\cal S}|})\hat V_T)\nonumber\\
&-\eta^T V^*-\hat\mu_T^T(\alpha
PV^*+R-Q^*)-\hat\lambda_T^T(Q^*-({\bf 1}_{|{\cal A}|}\otimes
I_{|{\cal S}|})V^*)\nonumber\\
&=\eta^T(\hat V_T- V^*)+(\lambda^*)^T(\alpha P\hat V_T + R - ({\bf
1}_{|{\cal A}|} \otimes I_{|{\cal S}|})\hat
V_T)-\hat\lambda_T^T(Q^*-({\bf 1}_{|{\cal A}|}\otimes
I_{|{\cal S}|})V^*)\label{eq:explain-14}\\
&=\eta^T(\hat V_T - V^*)+\sum_{a \in {\cal A}}{(\lambda_a^*)^T}
(\alpha P_a \hat V_T + R_a -\hat V_T)-\sum_{a\in {\cal A}}{\hat\lambda_{a,T}^T}(Q_a^* - V^*)\nonumber\\
&=\eta^T(\hat V_T-V^*)+ \sum_{s\in {\cal S}}{\left(\sum_{a'\in
{\cal A}}{(\lambda_{a'}^*)^T e_s}\right)\sum_{a\in {\cal A}}
{\frac{(\lambda_a^*)^T e_s}{\sum_{a'\in {\cal
A}}{(\lambda_{a'}^*)^T e_s}}}(\alpha
e_s^T P_a \hat V_T + e_s^T R_a - e_s^T \hat V_T)}\nonumber\\
&-\sum_{s\in {\cal S}}{\left(\sum_{a'\in {\cal
A}}{\hat\lambda_{a',T}^T e_s} \right)\sum_{a\in {\cal A}}
{\frac{\hat\lambda_{a,T}^T e_s}{\sum_{a'\in {\cal A}}{\hat
\lambda_{a',T}^T e_s}}(e_s^T Q_a^* - e_s^T V^*)}}\nonumber\\
&=\eta^T (\hat V_T -V^*)+\sum_{a'\in {\cal A}}{(\lambda_{a'}^*)^T}
(\alpha P_{\pi^*}\hat V_T+R_{\pi^*}-\hat V_T)-\sum_{a'\in {\cal
A}}{\hat\lambda_{a',T}^T}(Q_{\hat\pi_T^d}^*  -V^*)\label{eq:explain-15}\\
&= \eta ^T (\hat V_T -V^*)+\sum_{a'\in {\cal
A}}{(\lambda_{a'}^*)^T}(\alpha P_{\pi^*} \hat V_T +V^* -\alpha
P_{\pi^*} V^* - \hat V_T)-\sum_{a'\in {\cal
A}}{\hat\lambda_{a',T}^T}(\alpha P_{\hat \pi_T^d} V^*+R_{\hat\pi_T^d}-V^*)\label{eq:explain-16}\\
&=\eta^T (\hat V_T -V^*)+\sum_{a'\in {\cal A}}{(\lambda_{a'}^*)^T}
(\alpha P_{\pi^*}(\hat V_T-V^*)-(\hat V_T-V^*))\nonumber\\
&-\sum_{a'\in {\cal A}}{\hat\lambda_{a',T}^T}(\alpha P_{\hat\pi_T^d}
V^*+V_{\hat\pi_T^d}-\alpha P_{\hat\pi_T^d} V_{\hat\pi_T^d}-V^*)\label{eq:explain-17}\\
&=\eta^T(\hat V_T-V^*)+\left(\sum_{a'\in {\cal
A}}{(\lambda_{a'}^*)^T}\right)(\alpha P_{\pi^*}-I)(\hat V_T - V^*)
+\left(\sum_{a'\in {\cal A}}{\hat\lambda_{a',T}^T}
\right)(I-\alpha P_{\hat\pi_T^d})(V^*-V_{\hat\pi_T^d})\nonumber\\
&=\eta^T(\hat V_T - V^*)-\eta^T(\hat V_T - V^*)+\left(\sum_{a'\in
{\cal A}}{\hat\lambda_{a',T}^T}\right)(I-\alpha P_{\hat\pi_T^d})(V^*
- V_{\hat\pi_T^d}),\label{eq:explain-18}
\end{align}
where $e_s\in {\mathbb R}^{|{\cal S}|},s\in {\cal S}$ is the
$s$-th basis vector (all components are $0$ except for the $s$-th
component which is $1$),~\eqref{eq:explain-14} is due to $\alpha
PV^*+R-Q^*$ and $\mu^* = \lambda^*$,~\eqref{eq:explain-15} is due to the relations
\begin{align*}
&P_{\pi^*}=\sum_{a\in {\cal A}}{\pi_a^*(s)e_s^T e_s P_a},\quad
R_{\pi^*}= \sum_{a\in {\cal A}}{\pi_a^*(s)e_s^T e_s R_a},\quad
Q_{\hat\pi_T^d}^*=\sum_{a \in {\cal A}} {\hat\pi_{a,T}^d(s)e_s^T e_s
Q_a^*}
\end{align*}
by the definitions~\eqref{eq:def:P-mu}
and~\eqref{eq:def:R-mu},~\eqref{eq:explain-16} is due to
$R_{\pi^*}=V^* -\alpha P_{\pi^*} V^*$ and
$Q_{\hat\pi_T^d}^*=R_{\hat\pi_T^d}+\alpha P_{\hat\pi_T^d}V^*$, where
\begin{align*}
&P_{\hat\pi_T^d}=\sum_{a\in {\cal A}}{\hat\pi_{a,T}^d(s)e_s^T e_s
P_a},\quad R_{\hat\pi_T^d}=\sum_{a\in {\cal
A}}{\hat\pi_{a,T}^d(s)e_s^T e_s R_a},
\end{align*}
~\eqref{eq:explain-17} follows from
$R_{\hat\pi_T^d}=V^{\hat\pi_T^d}-\alpha P_{\hat\pi_T^d} V^{\hat\pi_T^d}$,
and~\eqref{eq:explain-18} follows
from~\cref{corollary:uniqueness-of-dual-LP-DP}. This completes the
proof.
\end{proof}

With the above result, one can derive a convergence result of the
policy constructed from the dual variables. The proof follows that
of~\citet[Theorem~4]{chen2016stochastic}. In particular,~\cref{lemma:duality-gap-representation} leads to
\begin{align}
L_I(\hat x_T,y^*)-L_I(x^*,\hat y_T) &= \sum_{a'\in {\cal A}}
{\hat\lambda_{a',T}^T}(I-\alpha P_{\hat \pi_T^d})(V^*-V^{\hat\pi_T^d})\nonumber\\
&\ge\eta^T(I-\alpha P_{\hat\pi_T^d})(V^*-V^{\hat\pi_T^d})\label{eq:explain-19}\\
&\ge\min_{s\in {\cal S}}(\eta^T e_s)\|(I-\alpha
P_{\hat\pi_T^d})(V^*-V^{\hat\pi_T^d})\|_\infty\nonumber\\
&\ge \min_{s \in {\cal S}}(\eta^T
e_s)(\|V^*-V^{\hat\pi_T^d}\|_\infty-\|\alpha P_{\hat\pi_T^d}(V^*-V^{\hat\pi_T^d})\|_\infty)\nonumber\\
&\ge\min_{s\in {\cal S}}(\eta^T e_s)(\|V^*-V^{\hat\pi_T^d}\|_\infty
-\|\alpha P_{\hat\pi_T^d}\|_\infty \|V^*-V^{\hat\pi_T^d}\|_\infty)\nonumber\\
&=\min_{s\in {\cal S}}(\eta^Te_s)(1-\alpha)\|V^*-V^{\hat\pi_T^d}\|_\infty,\label{eq:explain-20}
\end{align}
where~\eqref{eq:explain-19} is due to the constraint set $\Xi$ and~\eqref{eq:explain-20} is due to $\|P_{\hat\pi_T^d}\| =1$ and $\|\alpha P_{\hat\pi_T^d}\| = \alpha\|P_{\hat\pi_T^d}\|= \alpha$.
Combining the last inequality
with~\cref{proposition:duality-gap-convergence}, we have that for
any $\varepsilon>0$ and $\delta\in (0,1/e)$, if
$T\ge\max\{\Psi_1,\Psi_2\}$, then with probability at least
$1-\delta$, $\|V^*-V^{\hat\pi_T^d}\|_\infty\le\frac{1}{\min_{s \in
{\cal S}}(\eta^T e_s)(1-\alpha)}\varepsilon$ holds. Replacing
$\varepsilon$ with $\min_{s \in {\cal S}}(\eta^T
e_s)(1-\alpha)\varepsilon$ in the iteration lower bound~\ref{proposition:duality-gap-convergence}, and after
simplifications, we have the desired conclusion.

\section{Simulations}\label{section:simulations}

\subsection{Simple discounted MDP}

We consider the discounted MDP $({\cal S},{\cal A},{\cal P},{\cal R},\alpha)$ with ${\cal S} = \{1,2\}$, ${\cal A} = \{1,2\}$, ${\cal R}:= \{ \hat r_{ss'a}  \in [0,\sigma ],a
\in {\cal A},s,s' \in {\cal S}\}$ with $\hat r_{11} =3$, $\hat r_{12} = 1$, $\hat r_{21} = 2$, $\hat r_{22} = 1$, $\alpha = 0.9$, $\sigma = 3$, and
\begin{align*}
&P_1 = \begin{bmatrix}
0.2&0.8\\
0.3&0.7
\end{bmatrix},\quad P_2 = \begin{bmatrix}
0.5&0.5\\
0.7&0.3
\end{bmatrix}.
\end{align*}
In addition, consider the initial state distribution $v_0 = \begin{bmatrix}
0.4&0.6\end{bmatrix}^T$, the behavior policy $\theta_1 = \begin{bmatrix}
0.2&0.8\end{bmatrix}^T,\theta_2=\begin{bmatrix}0.7&0.3\end{bmatrix}^T$ and set $\eta=0.1\begin{bmatrix}1&1\end{bmatrix}^T$. The transition probability matrix under $\theta$ is $P_\theta = \begin{bmatrix}
0.44&0.56\\
0.42&0.58
\end{bmatrix}$ and the corresponding stationary state distribution is $\lim_{t\to \infty}v_0 P_\theta^t=v_\infty=\begin{bmatrix}
0.4286&0.5714\end{bmatrix}^T$. Then, the matrix $M_\infty$ corresponding to~\eqref{eq:M-matrix-def} is computed as
\begin{align*}
&M_{\infty,1}=\begin{bmatrix}
\theta_1(1) v_\infty(1)&0\\
0&\theta_2(1) v_\infty(2)
\end{bmatrix}= \begin{bmatrix}
0.0857&0\\
0&0.4000
\end{bmatrix},\\
&M_{\infty,2}=\begin{bmatrix}
\theta_1(2)v_\infty(1)&0\\
0&\theta_2(2)v_\infty(2)
\end{bmatrix}=\begin{bmatrix}
0.3429&0\\
0&0.1714
\end{bmatrix},\\
&M_\infty=\begin{bmatrix}
0.0857&0&0&0\\
0&0.4&0&0\\
0&0&0.3429&0\\
0&0&0&0.1714
\end{bmatrix}.
\end{align*}
One can also numerically compute $\zeta = 0.0856$, which is in general not available in real-world applications because it requires the knowledge on the state-action distributions for all $k\geq 0$. Solving the primal LP~\eqref{eq:LP-form-Q-learning} yields the primal optimal solution
\begin{align*}
&Q_1^*= \begin{bmatrix}
20.0690\\
18.1931
\end{bmatrix},\quad Q_2^*=\begin{bmatrix}
19.4414\\
18.6897
\end{bmatrix},\quad V^*=\begin{bmatrix}
20.0690\\
18.6897
\end{bmatrix},
\end{align*}
while by solving the dual LP~\eqref{eq:dual-LP-form-Q-learning}, the dual optimal solution is obtained as
\begin{align*}
&\lambda_1^*=\begin{bmatrix}
0.9379\\
0
\end{bmatrix},\quad \lambda_2^*= \begin{bmatrix}
0\\
1.0621
\end{bmatrix},\\
&\mu_1^*= \begin{bmatrix}
10.9425\\
0
\end{bmatrix},\quad \mu_2^*=\begin{bmatrix}
0\\
6.1954\end{bmatrix}.
\end{align*}
The corresponding optimal policy constructed from the dual solution is
\begin{align*}
&\pi_1^*:=\begin{bmatrix}
\frac{\lambda_1^*(1)}{\lambda_1^*(1)+\lambda_2^*(1)}&\frac{\lambda_2^*(1)}{\lambda_1^*(1)+\lambda_2^*(1)}
\end{bmatrix}^T=\begin{bmatrix}
1&0
\end{bmatrix}^T\in\Delta_2,\\
&\pi_2^*:=\begin{bmatrix}
\frac{\lambda_1^*(2)}{\lambda _1^*(2) + \lambda _2^*(2)}&\frac{\lambda_2^*(2)}{\lambda_1^*(2)+\lambda_2^*(2)}
\end{bmatrix}^T= \begin{bmatrix}
0&1
\end{bmatrix}^T\in \Delta_2.
\end{align*}
We run~\cref{algo:primal-dual-method} with $T = 10^5$, $\gamma_k=\gamma_0/\sqrt{k+1},k \ge 0$, $\eta = \frac{\sigma}{|{\cal S}|} \bf{1}_{|{\cal S}|}$, and~\cref{fig:fig1} depicts the evolutions of the Q-function error, defined as $\sum_{a\in {\cal A}}{{\|Q_a^*-{\hat Q}_{a,T}\|}_\infty}$, of~\cref{algo:primal-dual-method}, for different $\gamma_0 \in \{1,2,\ldots,4\}$.
\begin{figure}[h!]
\centering\includegraphics[width=15cm,height=8cm]{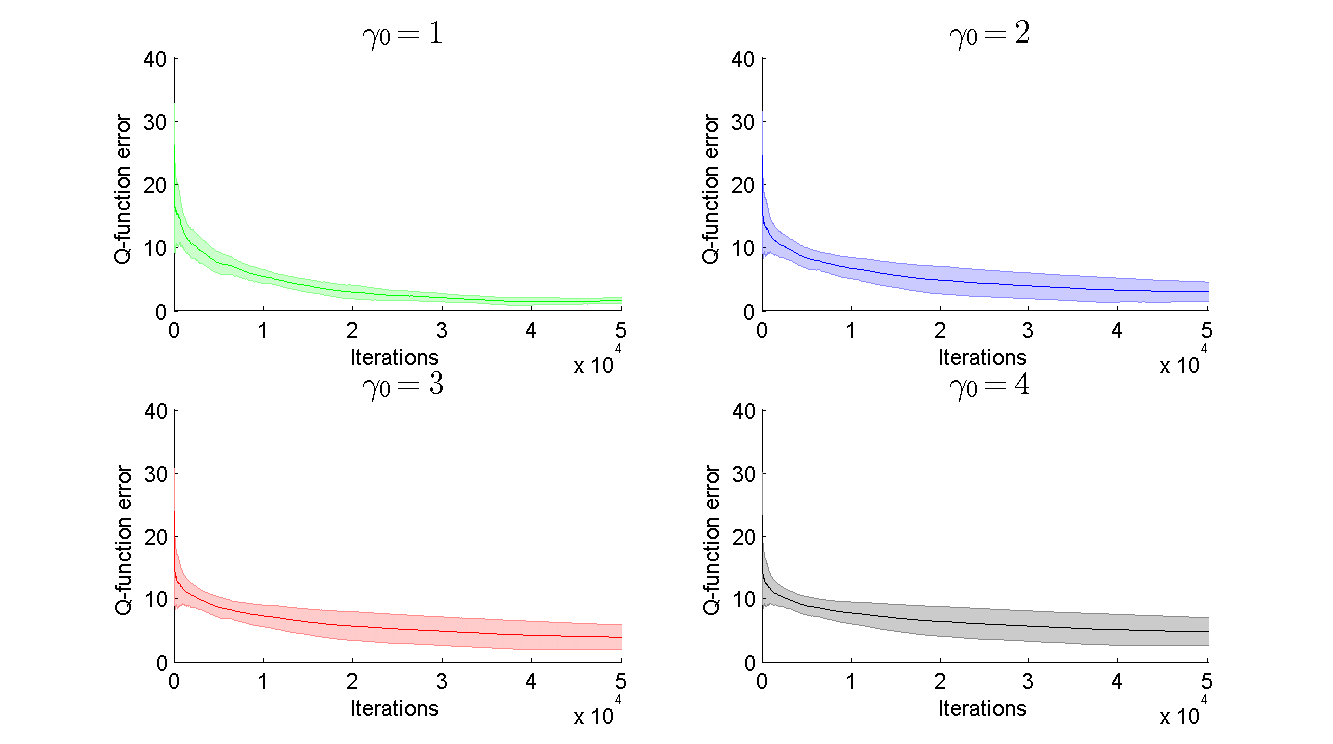}
\caption{Evolution of the Q-function error, $\sum_{a\in {\cal A}}{{\|Q_a^*-{\hat Q}_{a,T}\|}_\infty}$ for the SPD Q-learning.}\label{fig:fig1}
\end{figure}

The evolutions of the dual policy error, $\sum_{s\in {\cal S}} {\|\pi_s^*-\hat\pi_{s,T}^d\|_2}$, obtained using~\cref{algo:primal-dual-method} for different $\gamma_0\in\{1,2,\ldots,4\}$, are given in~\cref{fig:fig2} (left-hand side).
\begin{figure}[h!]
\centering\includegraphics[width=15cm,height=11cm]{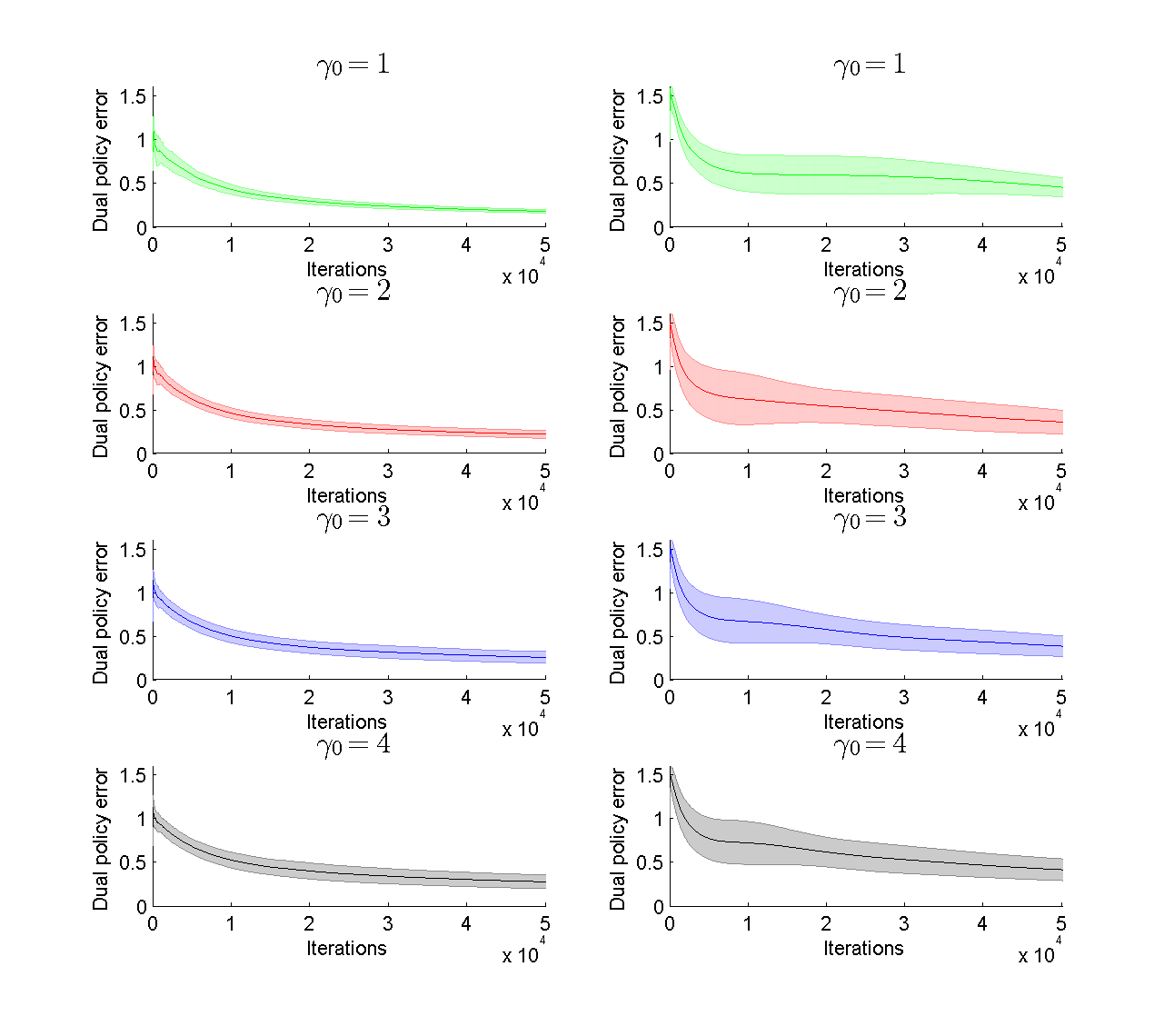}
\caption{Evolution of the dual policy error, $\sum_{s\in {\cal S}} {\|\pi_s^*-\hat\pi_{s,T}^d\|_\infty}$, from the SPD Q-learning and the dual policy error, $\sum_{s\in {\cal S}} {\|\pi_s^*-\tilde\pi_{s,T}\|_\infty}$, from the SPD-RL in~\citet[Algorithm~1]{chen2016stochastic} with the importance sampling.}\label{fig:fig2}
\end{figure}
In the figure, the result is compared with the error, $\sum_{s\in {\cal S}} {\|\pi_s^*-\bar \pi_{s,T}\|_\infty}$ (right-hand side), of the stochastic policy $\bar \pi_{s,T}$ obtained by using the dual solutions of a modified~\citet[Algorithm~1]{chen2016stochastic} with $\eta = \sigma {\bf 1}_{|{\cal S}|}/|{\cal S}|$. Note that in the modified algorithm, the dual solutions of~\citet[Algorithm~1]{chen2016stochastic} are multiplied by $\hat M_T$, which estimates the true $M_T$ by sample averages so as to find the true optimal dual variables, and all algorithms for the comparison employ the step-size rule, $\gamma_k=\gamma_0/\sqrt{k+1},k \ge 0$. \cref{fig:fig2} implies that both~\cref{algo:primal-dual-method} and modified~\citet[Algorithm~1]{chen2016stochastic} demonstrate similar convergence results in terms of the dual policy errors. Moreover, it shows that the dual policy from~\cref{algo:primal-dual-method} outperforms that from the modified SPD algorithm,~\citet[Algorithm~1]{chen2016stochastic}. This is reasonable as the latter suffers from additional estimation errors.

\cref{fig:fig3} shows the primal policy error (right-hand side), $\sum_{s\in {\cal S}} {\|\pi_s^*-\tilde\pi_{s,T}^p\|_\infty}$, where
\begin{align*}
&\tilde\pi_{s,T}^p(s):=\begin{cases}
   \begin{bmatrix}
   1 & 0\\
\end{bmatrix}^T \quad {\rm if}\quad {\rm argmax}_{a\in {\cal A}} \hat Q_{a,T}(s)=1\\
   \begin{bmatrix}
   0 & 1\\
\end{bmatrix}^T \quad {\rm if}\quad {\rm argmax}_{a\in {\cal A}} \hat Q_{a,T}(s)=2  \\
\end{cases},
\end{align*}
and the right-hand side figures are the policy error corresponding to the standard Q-learning. As one can see that, SPD Q-learning algorithm performs worse than the standard Q-learning on this simple task, which is more or less expected since Q-learning is a very powerful algorithm in practice. What's interesting here is that, when comparing the dual policy error in~\cref{fig:fig2} to the primal policy error in~\cref{fig:fig3}, it is clear that the primal policy of SPD Q-learning converges much faster than the dual policy. This demonstrates another potential advantage of  the proposed algorithm over the existing primal-dual algorithm.
\begin{figure}[h!]
\centering\includegraphics[width=15cm,height=11cm]{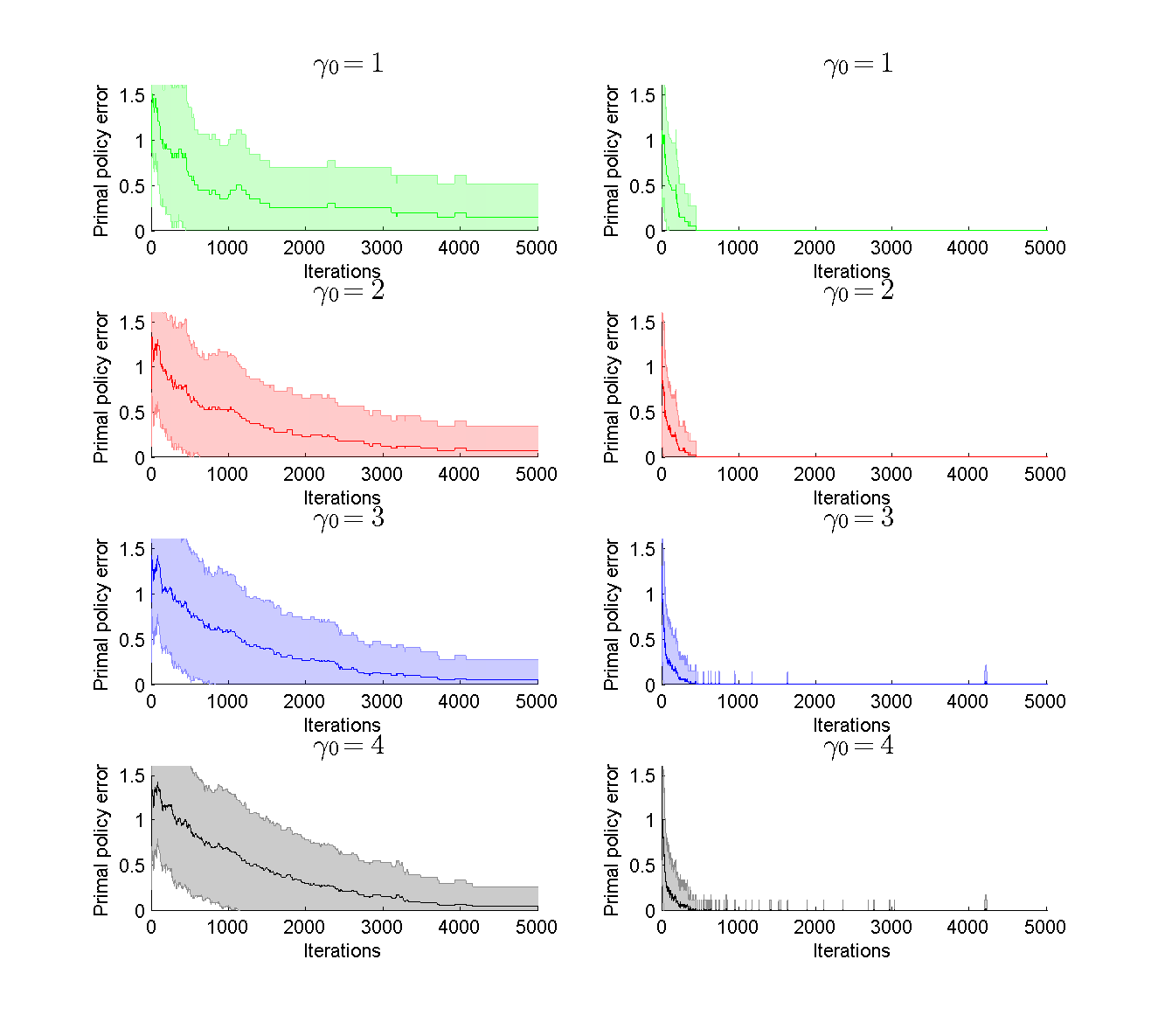}
\caption{Evolution of the primal policy error, $\sum_{s\in {\cal S}}{\|\pi_s^*-\tilde\pi_{s,T}^p\|_\infty}$ (left-hand side), from the SPD Q-learning and the error of the standard Q-learning (right-hand side).}\label{fig:fig3}
\end{figure}

\subsection{$2 \times 2$ grid world}

In this example, we consider a $2 \times 2$ grid world, which simulates a path-planning problem for a mobile robot in an environment. The goal of the RL agent is to navigate from the starting point (left-bottom corner) to the goal (right-top corner), using four actions ${\cal A}=\{{\rm up,down,left,right}\}$. The behavior policy is defined as a stochastic policy which uniformly chooses one among the four actions. If the action leads the agent to escape the square boundary, then the location of the agent does not change. The reward is uniformly distributed in $[0,0.2]$ except for the reward at the goal state which is uniformly distributed in $[1,1.2]$.  We run~\cref{algo:primal-dual-method} with $T = 5000$, $\gamma_k=2/\sqrt{k+10000},k \ge 0$, $\eta = \frac{\sigma}{|{\cal S}|} \bf{1}_{|{\cal S}|}$, and $\alpha = 0.9$.

\cref{fig:fig4} illustrates the evolutions of the average reward corresponding to the primal policy of the SPD Q-learning (blue line) and the average reward of the standard Q-learning (green line). At each iteration step, the average rewards are obtained by the sample average of the rewards under the primal policy at the iteration step over eight time steps. The results show that the average reward of the SPD Q-learning converges to that of the standard Q-learning.
\begin{figure}[h!]
\centering\includegraphics[width=12cm,height=7cm]{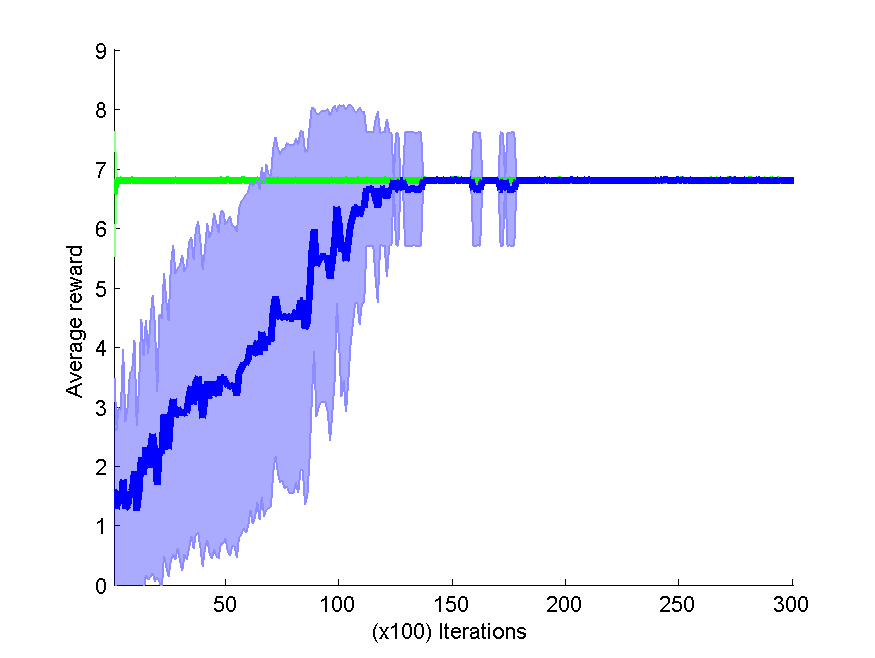}
\caption{Evolution of the average reward corresponding to the primal policy of the SPD Q-learn
ing (blue line) and the average reward of the standard Q-learning (green line).}\label{fig:fig4}
\end{figure}

\section{Conclusion}\label{section:conclusion}
In this paper, we introduce a new SPD-RL algorithm, where real-world observations under arbitrary behavior policies are used for finding a near-optimal policy. We prove the convergence with its sample complexity analysis. Promising future research directions are summarized as follows:
\begin{enumerate}
\item Safe RL: There exist scenarios where the safety of the
RL agent is critical, where one should take into account the safety during and after the learning while maximizing the long-term reward. In this case, the dual LP~\eqref{eq:dual-LP-form-Q-learning} is useful in that the optimal dual variables represent the state-action distribution under the optimal policy. By imposing constraints on the dual variable in~\eqref{eq:dual-LP-form-Q-learning}, we can shape the distribution by including prior knowledge of the task to design safer policies which avoid certain risks.

\item Distributed RL: In distributed RL~\citep{lee2018primal}, each agent receives local reward through a local processing, while communicating over sparse and random networks to learn the global value function corresponding to the aggregate of local rewards. The distributed learning can be formulated as a distributed optimization, and the frameworks in this paper can be applied for policy design problems.

\item Function approximation: The proposed SPD Q-learning framework can be easily combined with  (linear or nonlinear) function approximations to handle large-scale or continuous state-action spaces. In this approach, both primal and dual variables need to be approximated by parameterized function classes, where important questions arise: if the algorithm converges, then how the resulting value function and policies can be interpreted? Can we derive meaningful optimality error bounds? It remains interesting to explore the theoretical and empirical convergence behaviors of the algorithm under such extensions.

\end{enumerate}

\appendix
\section*{Appendix A. Proof of~\cref{lemma:K1-K2}}

In this appendix, we prove~\cref{lemma:K1-K2} from~\cref{section:convergence}:

\noindent
{\bf Lemma} {\it Assume that there exists a real number $\zeta>0$ such that it is
less than or equal to any diagonal element of $M$. Then, we have
\begin{align*}
&\|\nabla_x L_{M_k}(x_k,y_k)+\varepsilon_k\|_2 \le
\frac{\sqrt{13}|{\cal S}||{\cal A}| \|\eta\|_1
}{\zeta(1-\alpha)} =:K_1,\\
&\|\nabla_y L_{M_k}(x_k,y_k)+\xi_k\|_2 \le \frac{\sqrt{13} |{\cal
S}||{\cal A}|\sigma}{1-\alpha}=:K_2.
\end{align*}}
\begin{proof}
The first inequality follows by the chains of inequalities
\begin{align}
&\|\nabla_x L_{M_k}(x_k,y_k)+\varepsilon_k\|_2\nonumber\\
&= \left\| \begin{bmatrix}
   (e_{\hat s_k}  \otimes e_{\hat a_k})e_{\hat s_k}^T |{\cal S}||{\cal A}|\lambda_{\hat a_k}^{(k)}  - (e_{s_k}\otimes e_{a_k})e_{s_k}^T \mu_{a_k}^{(k)}\\
   e_{\hat s_k} e_{\hat s_k}^T |{\cal S}|\eta-e_{\hat s_k} e_{\hat s_k}^T |{\cal S}||{\cal A}|\lambda_{\hat a_k}^{(k)}+\alpha e_{s_{k+1}} e_{s_k}^T \mu_{a_k}^{(k)}\\
\end{bmatrix} \right\|_2\nonumber\\
&= (\| (e_{\hat s_k}\otimes e_{\hat a_k})e_{\hat s_k}^T
|{\cal S}||{\cal A}|\lambda_{\hat a_k}^{(k)}  - (e_{s_k}\otimes
e_{a_k})e_{s_k}^T \mu_{a_k}^{(k)} \|_2^2\nonumber\\
&+\|e_{\hat s_k}
e_{\hat s_k}^T |{\cal S}|\eta  - e_{\hat s_k} e_{\hat s_k}^T
|{\cal S}||{\cal A}|\lambda_{\hat a_k}^{(k)}+\alpha e_{s_{k+1}}
e_{s_k}^T \mu_{a_k}^{(k)}\|_2^2)^{1/2}\nonumber\\
&\le (2|{\cal S}|^2 |{\cal A}|^2 \| \lambda^{(k)} \|_\infty
^2  + 2\|\mu^{(k)}\|_\infty^2 + 3|{\cal S}|^2 \| \eta \|_\infty^2
+ 3|{\cal S}|^2 |{\cal A}|^2 \| \lambda^{(k)} \|_\infty^2+\alpha^2
3\|\mu ^{(k)}\|_\infty^2)^{1/2}\nonumber\\
&\le \left(\frac{5|{\cal S}|^2 |{\cal A}|^2 \| \eta
\|_1^2}{(1-\alpha)^2} + \frac{5\| \eta \|_1^2}{\zeta^2
(1-\alpha)^2} + 3|{\cal S}|^2 \| \eta \|_\infty^2 \right)^{1/2}\nonumber\\
&\le \frac{\sqrt{13} |{\cal S}||{\cal A}|
\|\eta\|_1}{\zeta(1-\alpha)}, \label{eq:eq10}
\end{align}
where the first inequality follows from the vector inequality $\|
v_1  + v_2  +  \cdots  + v_n \|_2^2  \le n(\| v_1 \|_2^2+\| v_2
\|_2^2 + \cdots + \|v_n\|_2^2)$ for any vectors $v_1,\ldots,v_n$
and $\| e_{\hat s_k} e_{\hat s_k}^T \lambda_{\hat a_k}^{(k)}\|_2^2
\le \| \lambda^{(k)} \|_\infty ^2,\|e_{s_{k + 1}} e_{s_k}^T
\mu_{a_k}^{(k)} \|_2^2  \le \| \mu ^{(k)} \|_\infty^2$, the second
inequality follows from~\cref{lemma:bound-of-solutions} and the
last inequality is obtained after simplifications. For the second
result, we have
\begin{align*}
&\| \nabla_y L_{M_k}(x_k,y_k)+\xi_k \|_2\\
&= \left\| \begin{bmatrix}
   |{\cal S}||{\cal A}|[(e_{\hat s_k}  \otimes e_{\hat a_k})e_{\hat s_k }^T Q_{\hat a_k}^{(k)}  - (e_{\hat s_k} \otimes e_{\hat a_k})e_{\hat s_k}^T V^{(k)}]\\
   \alpha e_{s_k} e_{s_{k + 1}}^T V^{(k)} + e_{s_k} \hat r(s_k,a_k,s_{k + 1}) - e_{s_k} e_{s_k}^T Q_{a_k}^{(k)}\\
\end{bmatrix} \right\|_2\\
&= (\| |{\cal S}||{\cal A}|[(e_{\hat s_k}\otimes e_{\hat a_k
})e_{\hat s_k}^T Q_{\hat a_k}^{(k)}-(e_{\hat s_k}\otimes
e_{\hat a_k})e_{\hat s_k}^T V^{(k)}]\|_2^2\\
&+ \|\alpha e_{s_k}e_{s_{k+1}}^T V^{(k)} + e_{s_k} \hat
r(s_k,a_k,s_{k+1})-e_{s_k} e_{s_k}^T Q_{a_k}^{(k)}\|_2^2)^{1/2}\\
&\le (2|{\cal S}|^2|{\cal A}|^2 \|Q^{(k)}\|_\infty^2+2
\|V^{(k)}\|_\infty ^2 + 3\alpha^2
\|V^{(k)}\|_\infty^2+3\sigma^2+3\|Q^{(k)}\|_\infty^2)^{1/2}\\
&\le \left(2|{\cal S}|^2 |{\cal A}|^2
\frac{\sigma^2}{(1-\alpha)^2} + 2|{\cal S}|^2|{\cal A}|^2
\frac{\sigma^2}{(1-\alpha)^2} + 3\alpha^2
\frac{\sigma^2}{(1-\alpha)^2} + 3\sigma^2+3\frac{\sigma^2}{(1-\alpha)^2}\right)^{1/2}\\
&\le \sqrt{13} \frac{|{\cal S}||{\cal A}|\sigma}{1-\alpha},
\end{align*}
where the first inequality follows from the vector inequality
$\left\| {v_1  + v_2  +  \cdots  + v_n } \right\|_2^2 \le
n(\left\| {v_1 } \right\|_2^2  + \left\| {v_2 } \right\|_2^2 +
\cdots  + \left\| {v_n } \right\|_2^2 )$ for any vectors $v_1 ,
\ldots ,v_n $ and $\| e_{s_k } e_{s_{k + 1} }^T V^{(k)}\|_2^2 \le
\| V^{(k)} \|_\infty^2,\| e_{s_k} e_{s_k}^T Q_{a_k}^{(k)} \|_2^2
\le \|Q^{(k)}\|_\infty^2$, the second inequality follows
from~\cref{lemma:bound-of-solutions}, and the last inequality
follows after simplifications.
\end{proof}

\vskip 0.2in

\section*{Appendix B. Proof of~\cref{lemma:Phi-bounds}}

In this appendix, we prove~\cref{lemma:Phi-bounds} from~\cref{section:convergence}:

\noindent
{\bf Lemma} {\it Assume that there exists a real number $\zeta>0$ such that it is
less than or equal to any diagonal element of $M$. Then, we have
\begin{align*}
&\Phi_1(x^*)\le\frac{2\sigma^2 |{\cal S}|}{(1-\alpha)^2}\frac{1}{\gamma_{T-1}},\\
&\Phi_2(y^*)\le \frac{1}{\gamma_{T-1}}\frac{4\| \eta \|_1^2}{\zeta^4
(1-\alpha)^2}+\frac{2 \|\eta
\|_1^2}{\zeta^3(1-\alpha)^2}\sum_{k=1}^{T-1}{\frac{\beta_{k-1}}{\gamma_{k-1}}}.
\end{align*}
}
\begin{proof}
First, $\Phi_1(x^*)$ is bounded by using the chains of inequalities
\begin{align}
\Phi_1(x^*)&= \sum_{k=0}^{T-1}{\frac{1}{2\gamma_k}({\cal E}_k^{(1)}(x^*)-{\cal E}_{k+1}^{(1)}(x^*))}\nonumber\\
&\le\frac{1}{2}\left(\frac{1}{\gamma_0}{\cal
E}_0^{(1)}(x^*)+\sum_{k=0}^{T-2}{\left(
\frac{1}{\gamma_{k+1}}-\frac{1}{\gamma_k} \right){\cal
E}_{k+1}^{(1)}(x^*)} \right)\nonumber\\
&= \frac{1}{2}\left(\frac{1}{\gamma_0} \| x_0-x^*
\|_2^2+\sum_{k=0}^{T-2} {\left(
\frac{1}{\gamma_{k+1}}-\frac{1}{\gamma_k}
\right)\|x_{k+1}-x^*\|_2^2} \right)\nonumber\\
&\le \frac{1}{2}\frac{1}{\gamma_0}(\|x_0\|_2^2+\|x^*\|_2^2+2\|x_0\|_2 \|x^*\|_2)\nonumber\\
&+ \frac{1}{2}\sum_{k=0}^{T-2}{\left(
\frac{1}{\gamma_{k+1}}-\frac{1}{\gamma_k}\right)(\|x_{k+1}\|_2^2+\|x^*\|_2^2+ 2\|x_{k+1}\|_2\|x^*\|_2)}\label{eq:explain-2}\\
&\le\frac{1}{2}\frac{4\sigma^2|{\cal S}|}{(1-\alpha
)^2}\left(\frac{1}{\gamma_0} + \sum_{k=0}^{T-2}{\left(
\frac{1}{\gamma_{k+1}}-\frac{1}{\gamma_k}\right)}
\right)\label{eq:explain-3}\\
&= \frac{2\sigma^2 |{\cal S}|}{(1-\alpha)^2}\frac{1}{\gamma_{T-1}},\nonumber
\end{align}
where~\eqref{eq:explain-2} follows from the relation $\|a-b\|_2^2=\|a\|_2^2+\|b\|_2^2-2a^T b$ and Cauchy-Schwarz inequality and~\eqref{eq:explain-3} is due to $x_k ,x^*\in {\cal
X}$. Similarly, we have
\begin{align}
\Phi_2(y^*) &=\frac{1}{2}\sum_{k=0}^{T-1}{\frac{1}{\gamma_k}({\cal
E}_k^{(2)} (\bar M_k^{-1} y^*)-{\cal
E}_{k+1}^{(2)} (\bar M_k^{-1} y^*))}\nonumber\\
&\le \frac{1}{2}\left( \frac{1}{\gamma_0}{\cal E}_0^{(2)}
(\bar M_0^{-1} y^*) + \sum_{k=1}^{T-1} {\left(\frac{1}{\gamma_k}{\cal
E}_k^{(2)} (\bar M_k^{-1} y^*)-\frac{1}{\gamma_{k-1}}{\cal E}_k^{(2)}
(\bar M_{k-1}^{-1} y^*)\right)} \right)\nonumber\\
&= \frac{1}{2}\frac{1}{\gamma_0}{\cal E}_0^{(2)} (\bar M_0^{-1}
y^*)+\frac{1}{2}\sum_{k=1}^{T-1}{\left(
\frac{1}{\gamma_k}-\frac{1}{\gamma_{k-1}}
\right){\cal E}_k^{(2)} (\bar M_k^{-1} y^*)}\nonumber\\
&+ \frac{1}{2}\sum_{k=1}^{T-1} {\frac{1}{\gamma_{k-1}}({\cal
E}_k^{(2)} (\bar M_k^{-1} y^*)-{\cal E}_k^{(2)}(\bar M_{k-1}^{-1}
y^*))},\label{eq:eq8}
\end{align}
where the last equality is obtained by rearranging terms. For any
$k\geq 0$, ${\cal E}_k^{(2)}(\bar M_k^{-1} y^*)$ is bounded as
\begin{align*}
{\cal E}_k^{(2)}(\bar M_k^{-1} y^*)&=\|y_k - \bar M_k^{-1}y^*\|_2^2\\
&= \|y_k\|_2^2 + \|\bar M_k^{-1}y^*\|_2^2 - 2y_k^T \bar M_k^{-1}y^*\\
&\le \|y_k\|_2^2 + \|\bar M_k^{-1}y^* \|_2^2 + 2\|y_k\|_2 \|\bar M_k^{-1}y^*\|_2\\
&\le \|y_k\|_2^2 + \|\bar M_k^{-1}\|_2^2 \|y^*\|_2^2 + 2\|y_k\|_2\|\bar M_k^{-1}\|_2 \|y^*\|_2\\
&\le \|y_k\|_2^2 +\zeta^{-2}\|y^*\|_2^2 + 2\|y_k\|_2 \zeta^{-1} \|y^*\|_2\\
&\le \frac{\|\eta \|_1^2}{\zeta^2(1-\alpha)^2} + \zeta^{-2}
\frac{\|\eta\|_1^2}{\zeta^2(1-\alpha)^2} + 2\zeta^{-1}
\frac{\|\eta\|_1^2}{\zeta^2(1-\alpha)^2}\\
&\le \frac{4\| \eta \|_1^2}{\zeta^4(1-\alpha)^2},
\end{align*}
where the first inequality is due to the Cauchy-Schwarz inequality, the fourth inequality is due
to~\cref{lemma:bound-of-solutions}, and the last inequality follows using $\zeta <1$. Upon substituting the above inequality into~\eqref{eq:eq8}, we obtain
\begin{align}
&\Phi_2(y^*)\le\frac{1}{\gamma_{T-1}}\frac{4\| \eta
\|_1^2}{\zeta^4(1-\alpha)^2} + \frac{1}{2}\sum_{k=1}^{T-1}
{\frac{1}{\gamma_{k-1}}({\cal E}_k^{(2)}(\bar M_k^{-1} y^*)-{\cal
E}_k^{(2)}(\bar M_{k-1}^{-1} y^*))}.\label{eq:eq9}
\end{align}
The second term in~\eqref{eq:eq9} is written as
\begin{align}
&\frac{1}{2}\sum_{k=1}^{T-1}{\frac{1}{\gamma_{k-1}}({\cal
E}_k^{(2)} (\bar M_k^{-1} y^*)-{\cal E}_k^{(2)}(\bar M_{k-1}^{-1} y^*))}\nonumber\\
&=\frac{1}{2}\sum_{k=1}^{T-1}{\frac{1}{\gamma_{k-1}}(\|y_k-\bar M_k^{-1}y^*\|_2^2
-\|y_k-\bar M_{k-1}^{-1} y^*\|_2^2)}\nonumber\\
&=\frac{1}{2}\sum_{k=1}^{T-1}{\frac{1}{\gamma_{k-1}}(\|\bar M_k^{-1}y^*\|_2^2
+\|y_k\|_2^2 -2y_k^T \bar M_k^{-1}y^*-
\|\bar M_{k-1}^{-1}y^*\|_2^2-\|y_k\|_2^2 + 2y_k^T \bar M_{k-1}^{-1}y^*)}\label{eq:explain-4}\\
&=\frac{1}{2}\sum_{k=1}^{T-1}{\frac{1}{\gamma_{k-1}}((y^*)^T(\bar M_k^{-1}
\bar M_k^{-1}-\bar M_{k-1}^{-1} \bar M_{k-1}^{-1})y^*+2y_k^T (\bar M_{k-1}^{-1}-\bar M_k^{-1})y^*)}\nonumber\\
&=\frac{1}{2}\sum_{k=1}^{T-1}{\frac{1}{\gamma_{k-1}}(y^*(\bar M_k^{-1}
-\bar M_{k-1}^{-1})(\bar M_k^{-1}+\bar M_{k-1}^{-1})y^*+2y_k^T(\bar M_{k-1}^{-1}-\bar M_k^{-1})y^*)}\label{eq:explain-5}\\
&\le\frac{1}{2}\sum_{k=1}^{T-1}{\frac{1}{\gamma_{k-1}}(\|y^*\|_2^2
\|\bar M_{k-1}^{-1}-\bar M_k^{-1}\|_2 \|\bar M_{k-1}^{-1}+ \bar M_k^{-1}\|_2+
2\|y_k\|_2 \|y^*\|_2 \|\bar M_{k-1}^{-1}- \bar M_k^{-1}\|_2)}\nonumber\\
&\leq\frac{1}{2}\frac{\|\eta\|_1^2}{\zeta^2(1-\alpha)^2}\sum_{k=1}^{T-1}
{\frac{1}{\gamma_{k-1}}(\|\bar M_{k-1}^{-1}-\bar M_k^{-1}\|_2 \|\bar M_{k-1}^{-1}
+\bar M_k^{-1}\|_2+2\|\bar M_{k-1}^{-1}-\bar M_k^{-1}\|_2)}\label{eq:explain-6}\\
&\le\frac{1}{2}\frac{\|\eta\|_1^2}{\zeta^2(1-\alpha)^2}\sum_{k=1}^{T-1}
{\frac{1}{\gamma_{k-1}}(\|\bar M_{k-1}^{-1} - \bar M_k^{-1}\|_2
\frac{2}{\zeta}+2\|\bar M_{k-1}^{-1}-\bar M_k^{-1}\|_2)}\label{eq:explain-7}\\
&= \frac{\|\eta\|_1^2}{\zeta^2(1-\alpha)^2}\left(1 +
\frac{1}{\zeta}\right)\sum_{k=1}^{T-1}{\frac{\|\bar M_{k-1}^{-1}-\bar M_k^{-1}\|}{\gamma_{k-1}}}\nonumber\\
&\le \frac{2\|\eta\|_1^2}{\zeta^3(1-\alpha)^2}\sum_{k=1}^{T-1}
{\frac{\beta_{k-1}}{\gamma_{k-1}}},\nonumber
\end{align}
where~\eqref{eq:explain-4} follows from the relation $\|a-b\|_2^2
=\|a\|_2^2+\|b\|_2^2-2a^T b$ for any vectors
$a,b$,~\eqref{eq:explain-5} follows from
$(\bar M_k^{-1}-\bar M_{k-1}^{-1})(\bar M_k^{-1}+\bar M_{k-1}^{-1})=\bar M_k^{-1}
\bar M_k^{-1}-\bar M_{k-1}^{-1} \bar M_{k-1}^{-1}$,~\eqref{eq:explain-6} is due
to $y_k,y_T^*\in {\cal Y}$,~\eqref{eq:explain-7} follows from the
triangle inequality and $\|\bar M_k^{-1}\|_2\le \zeta^{-1}$, and the
last inequality comes from~\cref{assump:distribution-convergence}.
Combining the last inequality with~\eqref{eq:eq9} yields the
second conclusion. This completes the proof.
\end{proof}

\vskip 0.2in

\section*{Appendix C. Proof of~\cref{lemma:bound1}}

In this appendix, we prove~\cref{lemma:bound1} from~\cref{section:convergence}:

\noindent
{\bf Lemma} {\it We have $\Delta {\cal G}_{T+1}={\cal G}_{T+1}-{\cal
G}_T=\frac{1}{2\gamma_T}({\cal E}_{T+1}-{\mathbb E}[{\cal
E}_{T+1}|{\cal F}_T])\le b$ with probability one, where
\begin{align*}
&b=\frac{13\gamma_0 \|\eta\|_1+
13\gamma_0\zeta^2\sigma^2+16\sqrt{26}\sigma\zeta
\|\eta\|_1}{2}\frac{|{\cal S}|^2 |{\cal
A}|^2}{\zeta^2(1-\alpha)^2}.
\end{align*}}
\begin{proof}
Note that
\begin{align*}
&{\cal G}_{T+1}-{\cal G}_T=\frac{1}{2\gamma_T}({\cal
E}_{T+1}^{(1)}(x_T^*)- {\mathbb E}[{\cal E}_{T+1}^{(1)}(x_T^*)|{\cal
F}_T])+\frac{1}{2\gamma_T}({\cal E}_{T+1}^{(2)} (\bar M_{T+1}^{-1}y_T^*)
-{\mathbb E}[{\cal E}_{T+1}^{(2)}(\bar M_{T+1}^{-1} y_T^*)|{\cal F}_T]),
\end{align*}
and we first derive a bound on $\frac{1}{2\gamma_T}({\cal
E}_{T+1}^{(1)}(x_T^*)-{\mathbb E}[{\cal E}_{T+1}^{(1)}(x_T^*)|{\cal
F}_T])$. Using the definition of ${\cal E}_{k+1}^{(1)}(x_T^*)$
yields
\begin{align}
&\frac{1}{2\gamma_T}({\cal E}_{T+1}^{(1)}(x_T^*)-{\mathbb E}[{\cal
E}_{T+1}^{(1)}(x_T^*)|{\cal F}_T])=\frac{1}{2\gamma_T} \|
\Pi_{\cal X}(x_T-\gamma_T\nabla_x L_{G_k}(x_T,y_T)-\gamma_T\varepsilon_T)-x_T^* \|_2^2\nonumber\\
&-\frac{1}{2\gamma_T}{\mathbb E}[\| \Pi_{\cal X}(x_T-\gamma_T
\nabla_x L_{G_T}(x_T,y_T)-\gamma_T\varepsilon_T)-x_T^* \|_2^2 |{\cal
F}_T].\label{eq:eq6}
\end{align}
Noting
\begin{align*}
&\|\Pi_{\cal X}(x_T-\gamma_T\nabla_x L_{G_T}(x_T,y_T)-\gamma_T
\varepsilon_T)-x_T^*\|_2^2\\
& =\|\Pi_{\cal X}(x_T-\gamma_T\nabla_x
L_T(x_T,y_T)-\gamma_T \varepsilon_T)-x_T\|_2^2+\|x_T-x_T^*\|_2^2\\
&-2(x_T-x_T^*)^T(\Pi_{\cal X}(x_T-\gamma_T\nabla_x
L_T(x_T,y_T)-\gamma_T \varepsilon_T)-x_T)
\end{align*}
it follows from~\eqref{eq:eq6} that
\begin{align}
&\frac{1}{2\gamma_T}({\cal E}_{T+1}^{(1)}(x_T^*)- {\mathbb E}[{\cal
E}_{T+1}^{(1)}(x_T^*)|{\cal F}_T ])\nonumber\\
&=\frac{1}{2\gamma_T} \|\Pi_{\cal X}(x_T-\gamma_T\nabla_x
L_{G_T}(x_T,y_T)-\gamma_T\varepsilon_T)-x_T\|_2^2+\frac{1}{2\gamma_T}\|x_T-
x_T^*\|_2^2\nonumber\\
&-\frac{1}{\gamma_T}(x_T-x_T^*)^T(\Pi_{\cal X}(x_T-\gamma_T\nabla_x
L_{G_T}(x_T,y_T)-\gamma_T\varepsilon_T)-x_T)\nonumber\\
&-\frac{1}{2\gamma_T} {\mathbb E}[\| \Pi_{\cal X}(x_T-\gamma_T
\nabla_x L_{G_T}(x_T,y_T)-\gamma_T\varepsilon_T)-x_T\|_2^2|{\cal
F}_T]-\frac{1}{2\gamma_T} \|x_T-x_T^*\|_2^2\nonumber\\
&+\frac{1}{\gamma_T}{\mathbb E}[(\Pi_{\cal X}(x_T-\gamma_T\nabla_x
L_{G_T}(x_T,y_T)-\gamma_T
\varepsilon_T)-x_T)^T(x_T-x_T^*)|{\cal F}_T]\nonumber\\
&\le \frac{1}{2\gamma_T}\|\Pi_{\cal X}(x_T-\gamma_T\nabla_x
L_T(x_T,y_T)-\gamma_T\varepsilon_T)-x_T\|_2^2\nonumber\\
&+\frac{1}{\gamma_T}\|x_T-x_T^*\|_2\|\Pi_{\cal X}(x_T-\gamma_T
\nabla_x L_T(x_T,y_T)-\gamma_T \varepsilon_T)-x_T\|_2\nonumber\\
&+ \frac{1}{\gamma_T} {\mathbb E}[\|x_T-x_T^*\|_2 \|\Pi_{\cal
X}(x_T-\gamma_T\nabla_x L_T(x_T,y_T)-\gamma_T\varepsilon_k)-x_T\|_2 |{\cal F}_T]\label{eq:explain-1}\\
&\le\frac{\gamma_T}{2} \|\nabla_x L_T(x_T,y_T)+\varepsilon_T\|_2^2
+\|x_T-x_T^*\|_2 \|\nabla_x L_T(x_T,y_T)+\varepsilon_T\|_2\nonumber\\
&+ \|x_T-x_T^*\|_2 {\mathbb E}[\|\nabla_x L_T(x_T,y_T)+\varepsilon_T\|_2 |{\cal F}_T]\label{eq:explain-9}\\
&\le\frac{1}{2}(\gamma_T K_1^2+4K_1 \|x_T-x_T^*\|_2)\nonumber\\
&\le\frac{1}{2}\left(\gamma_0\left( \frac{\sqrt{13}|{\cal
S}||{\cal A}| \|\eta\|_1}{\zeta(1-\alpha)} \right)^2 + 4\left(
\frac{\sqrt{13}|{\cal S}||{\cal A}| \|\eta\|_1}{\zeta(1-\alpha)}
\right)\sqrt{|{\cal S}||{\cal A}| + |{\cal S}|}
\|x_T-x_T^*\|_\infty \right)\nonumber\\
&\le\frac{13\gamma_0+8\sqrt{26}\sigma\zeta}{2}\frac{|{\cal S}|^2
|{\cal A}|^2 \|\eta\|_1}{\zeta^2(1-\alpha)^2},\nonumber
\end{align}
where~\eqref{eq:explain-1} follows from the Cauchy-Schwarz
inequality,~\eqref{eq:explain-9} follows from the nonexpansive map
property of the projection $\|\Pi_{\cal X}(a)-\Pi_{\cal X}(b)\|_2
\le \|a-b\|_2$, and the last inequality is obtained after
simplifications. Using similar lines, one obtains
\begin{align*}
&\frac{1}{2\gamma_T}({\cal E}_{T+1}^{(2)}(\bar M_T^{-1}y_T^*) - {\mathbb
E}[{\cal E}_{T+1}^{(2)}(\bar M_T^{-1}y_T^*)|{\cal F}_T])\le
\frac{13\gamma_0\zeta^2\sigma + 8\sqrt{26}\zeta
\|\eta\|_1}{2}\frac{\sigma |{\cal S}|^2 |{\cal
A}|^2}{\zeta^2(1-\alpha)^2},
\end{align*}
and combining the two inequalities completes the proof.
\end{proof}

\vskip 0.2in

\section*{Appendix D. Proof of~\cref{lemma:bound2}}

In this appendix, we prove~\cref{lemma:bound2} from~\cref{section:convergence}:

\noindent
{\bf Lemma} {\it $\frac{1}{T} \langle {\cal G} \rangle_T\le a $ holds with
probability one, where
\begin{align*}
&a=\frac{1}{4}\frac{(\gamma_0(13 \|\eta\|_1 +
4\sqrt{26}\sigma\zeta)\|\eta\|_1+13\gamma_0\zeta^2\sigma^2+4\sqrt{26}\sigma
\|\eta\|_1)^2 |{\cal S}|^4 |{\cal A}|^4}{\zeta^4(1-\alpha)^4}.
\end{align*}}
\begin{proof}
Using ${\mathbb E}[{\cal E}_k |{\cal F}_k]={\cal E}_k$, we have
\begin{align}
{\mathbb E}[|{\cal G}_{k+1}-{\cal G}_k|^2 |{\cal
F}_k]&=\frac{1}{4\gamma_k^2} {\mathbb E}[|{\cal E}_{k+1}- {\mathbb
E}[{\cal E}_{k+1}|{\cal F}_k]|^2 |{\cal F}_k]\nonumber\\
&= \frac{1}{4\gamma_k^2} {\mathbb E}[|{\cal E}_{k+1}-{\cal E}_k- {\mathbb E}[{\cal E}_{k+1}-{\cal E}_k |{\cal F}_k]|^2|{\cal F}_k]\nonumber\\
&\le \frac{1}{4\gamma_k^2} {\mathbb E}[{\mathbb E}[|{\cal
E}_{k+1}-{\cal E}_k|^2 |{\cal F}_k]|{\cal F}_k]\nonumber\\
&= \frac{1}{4\gamma_k^2}{\mathbb E}[|\underbrace{{\cal
E}_{k+1}^{(1)}(x_T^*)+{\cal E}_{k+1}^{(2)}(\bar M_{k+1}^{-1} y_T^*)-{\cal
E}_k^{(1)}(x_T^*)-{\cal E}_k^{(2)}(\bar M_{k+1}^{-1} y_T^*)}_{=:\Upsilon_1}|^2
|{\cal F}_k],\label{eq:eq7}
\end{align}
where the inequality follows from the fact that the variance of a
random variable is bounded by its second moment. For
bounding~\eqref{eq:eq7}, note that $\Phi_1$ is written as
\begin{align*}
&\Upsilon_1= \|x_{k+1}-x_T^*\|_2^2 - \|x_k-x_T^*\|_2^2 + \|y_{k+1}- \bar M_{k+1}^{-1} y_T^*\|_2^2- \|y_k-\bar M_{k+1}^{-1} y_T^*\|_2^2
\end{align*}
and $|\Upsilon_1|\le |\|x_{k+1}-x_T^*\|^2-\|x_k-x_T^*\|^2| + |\|y_{k+1}-\bar M_{k+1}^{-1}y_T^*\|^2 - \|y_k-\bar M_{k+1}^{-1}y_T^*\|^2|$.

Here, the first two terms have the bound
\begin{align}
&|\|x_{k+1}-x_T^*\|^2 - \|x_k - x_T^*\|^2|\nonumber\\
&= |\|\Pi_{\cal X}(x_k -\gamma_k\nabla_x L_k(x_k,y_k)-\gamma_k
\varepsilon_k)-x_k\|^2\nonumber\\
& - 2(\Pi_{\cal X}(x_k-\gamma_k \nabla_x
L_k(x_k,y_k)-\gamma_k\varepsilon_k)-x_k)^T(x_k-x_T^*)|\label{eq:explain-10}\\
&\le \|\Pi_{\cal X} (x_k-\gamma_k\nabla_x L_k(x_k,y_k)-\gamma_k
\varepsilon_k)-x_k\|_2^2\nonumber\\
& + 2\|\Pi_{\cal X}(x_k-\gamma_k\nabla_x
L_k(x_k,y_k)-\gamma_k\varepsilon_k)-x_k \|_2 \|x_k-x_T^*\|_2\label{eq:explain-11}\\
&\le\gamma_k^2 \|\nabla_x
L_k(x_k,y_k)+\varepsilon_k\|_2^2+2\gamma_k \|\nabla_x L_k(x_k,y_k)+\varepsilon_k\|_2 \|x_k-x_T^*\|_2\label{eq:explain-13}\\
&\le\gamma_k^2 K_1^2 +2\gamma_k K_1\sqrt{|{\cal S}||{\cal
A}|+|{\cal S}|} \|x_k-x_T^*\|_\infty\label{eq:explain-12}\\
&\le\gamma_k^2 \left( \frac{\sqrt{13}|{\cal S}||{\cal A}|
\|\eta\|_1}{\zeta(1-\alpha)} \right)^2 +2\gamma_k \frac{\sqrt{13}
|{\cal S}||{\cal A}| \|\eta\|_1}{\zeta(1-\alpha)}\sqrt{2|{\cal S}||{\cal A}|}\frac{2\sigma}{1-\alpha}\nonumber\\
&\le\gamma_k \frac{\gamma_0 (13\|\eta\|_1+
4\sqrt{26}\sigma\zeta)|{\cal S}|^2 |{\cal A}|^2 \|\eta\|_1}{\zeta^2(1-\alpha)^2}, \nonumber
\end{align}
where~\eqref{eq:explain-10} follows from the relation $\|a-b\|_2^2
=\|a\|_2^2+\| b\|_2^2 - 2a^T b$ for any vectors
$a,b$,~\eqref{eq:explain-11} follows from the Cauchy-Schwarz
inequality,~\eqref{eq:explain-13} is due to the nonexpansive map
property of the projection $\|\Pi_{\cal X}(a)-\Pi_{\cal X}(b)\|_2
\le \|a-b\|_2$,~\eqref{eq:explain-12} comes
from~\cref{lemma:K1-K2} and the inequality $\|a\|_2\le \sqrt n
\|a\|_\infty$ for any $a\in {\mathbb R}^n$, and the last
inequality follows from algebraic simplifications. Similarly, the
second two terms in $\Phi_1$ are bounded as
\begin{align*}
&|\|y_{k+1}-\bar M_{k+1}^{-1} y_T^*\|^2-\|y_k-\bar M_{k+1}^{-1}y_T^*\|^2|\\
&=|\|\Pi_{\cal Y}(y_k-\gamma_k\nabla_y
L_{M_k}(x_k,y_k)-\gamma_k\xi_k)-y_k\|^2\\
&-2(\Pi_{\cal Y}(y_k-\gamma_k\nabla_y L_{M_k}(x_k,y_k)-\gamma_k\xi_k)-y_k)(y_k-\bar M_{k+1}^{-1}y_T^*)|\\
&\le \|\Pi_{\cal Y}(y_k-\gamma_k\nabla_y L_{M_k}(x_k,y_k)-\gamma_k \xi_k)-y_k\|_2^2\\
& + 2\|\Pi_{\cal Y}(y_k-\gamma_k\nabla_y L_{M_k}(x_k,y_k)-\gamma_k
\xi_k)-y_k\|_2 \|y_k-\bar M_{k+1}^{-1} y_T^*\|_2\\
&\le\gamma_k^2 \|\nabla_y L_{M_k}(x_k,y_k)+\xi_k\|_2^2+2\gamma_k
\|\nabla_y L_{M_k}(x_k,y_k)+\xi_k\|_2 \|y_k-\bar M_{k+1}^{-1}y_T^*\|_2\\
&\le\gamma_k^2 K_2^2 + 2\gamma_k K_2\sqrt{2|{\cal S}||{\cal A}|}
(\|y_k\|_\infty+\|\bar M_{k+1}^{-1}\|_\infty \|y_T^*\|_\infty)\\
&\le\gamma_k \frac{(13\gamma_0\zeta^2\sigma^2+4\sqrt{26}\sigma\|
\eta\|_1)|{\cal S}|^2|{\cal A}|^2}{\zeta^2(1-\alpha)^2}.
\end{align*}
Combining the last two results leads to
\begin{align*}
&|\Upsilon_1|\le\gamma_k \frac{(\gamma_0(13
\|\eta\|_1+4\sqrt{26}\sigma\zeta)
\|\eta\|_1+13\gamma_0\zeta^2\sigma^2+4\sqrt{26}\sigma
\|\eta\|_1)|{\cal S}|^2|{\cal A}|^2}{\zeta^2(1-\alpha)^2},
\end{align*}
and plugging the bound on $|\Upsilon_1|$ into~\eqref{eq:eq7} and after
simplifications, we obtain
\begin{align*}
&{\mathbb E}[|{\cal G}_{k+1}-{\cal G}_k |^2 |{\cal F}_k]\le
\frac{1}{4\gamma_k^2} {\mathbb E}[\Upsilon_1^2|{\cal F}_k]\\
&\le\frac{1}{4}\frac{(\gamma_0(13\|\eta\|_1+4\sqrt{26}\sigma\zeta)\|\eta\|_1+13\gamma_0\zeta^2
\sigma^2+4\sqrt{26}\sigma \|\eta\|_1)^2 |{\cal S}|^4 |{\cal
A}|^4}{\zeta^4(1-\alpha)^4},
\end{align*}
which is the desired conclusion.
\end{proof}

\vskip 0.2in

\section*{Appendix~E. Proofs of~\cref{ex:ex3}}

\noindent
{\bf \cref{proposition:ex3}} {\it (a) There exists some real number $c >0$ such that $\beta_k \leq c |\lambda_2|^k$. }
\begin{proof}
Since
$u_1,u_2,\ldots,u_{|{\cal S}|}$ span ${\mathbb R}^{|{\cal S}|}$,
one can write $v_0=c_1 u_1 +\cdots+c_{|{\cal S}|}u_{|{\cal S}|}$
with $c_1,c_2,\ldots,c_{|{\cal S}|}\in {\mathbb R}$ so that
\begin{align*}
&v_k=(P_\theta^T)^k v_0= U^{-T}\Sigma^k U^T v_0 =U^{-T}\Sigma^k
U^T(c_1u_1+\cdots+c_{|{\cal S}|}u_{|{\cal S}|})= c_1
\lambda_1^k u_1+\cdots+c_{|{\cal S}|}\lambda_{|{\cal S}|}^k u_{|{\cal S}|}\\
&=c_1 u_1 + c_2 \lambda_2^k u_2+\cdots+ c_{|{\cal
S}|}\lambda_{|{\cal S}|}^k u_{|{\cal S}|}
\end{align*}
and $v_k(s)=c_1 e_s^T u_1 +c_2\lambda_2^k e_s^T
u_2+\cdots+c_{|{\cal S}|}\lambda_{|{\cal S}|}^k e_s^T u_{|{\cal
S}|}$. Then, we get
\begin{align}
&\beta_k=\zeta^{-2} \max_{s \in {\cal S}}(v_{k+1}(s)-v_k(s))\nonumber\\
&=\zeta^{-2} \max_{s \in {\cal S}}((c_1 e_s^T u_1 + \cdots +
c_{|{\cal S}|}\lambda_{|{\cal S}|}^{k+1} e_s^T u_{|{\cal
S}|})-(c_1 e_s^T u_1  + \cdots + c_{|{\cal S}|}\lambda_{|{\cal
S}|}^k e_s^T u_{|{\cal S}|}))\nonumber\\
&=\zeta^{-2} \max_{s\in {\cal S}}(c_2(\lambda_2-1)\lambda_2^k
e_s^T u_2+\cdots+ c_{|{\cal S}|}(\lambda_{|{\cal S}|}-1)\lambda_{|{\cal S}|}^k e_s^T u_{|{\cal S}|})\nonumber\\
&\le\zeta^{-2}\max_{s\in {\cal S}}(|c_2(\lambda_2-1)e_s^T u_2
||\lambda_2|^k+\cdots+ |c_{|{\cal S}|}(\lambda_{|{\cal S}|}-1)e_s^T u_{|{\cal S}|}||\lambda_{|{\cal S}|}|^k)\nonumber\\
&\le\zeta^{-2} \max_{s\in {\cal S}}(|c_2(\lambda_2-1)e_s^T
u_2|+\cdots+|c_{|{\cal S}|}(\lambda_{|{\cal S}|}-1)e_s^T u_{|{\cal
S}|}|)|\lambda_2|^k.\label{eq:eq14}
\end{align}
Therefore, setting $c = \zeta^{-2} \max_{s\in {\cal S}}(|c_2(\lambda_2-1)e_s^T
u_2|+\cdots+|c_{|{\cal S}|}(\lambda_{|{\cal S}|}-1)e_s^T u_{|{\cal
S}|}|)$ gives the desired conclusion.
\end{proof}

\noindent
{\bf \cref{proposition:ex3}} {\it (b) There exists some real number $\kappa>0$ such that $|\lambda_2|^k\le
\kappa/(k+1)$ for all $k\geq 0$.}
\begin{proof}
We first calculate a
nonnegative integer $k^*$ such that $|\lambda_2|^k \le
1/(k+1),\forall k \ge k^*$. Noting
\begin{align}
&|\lambda_2|^k\le \frac{1}{k+1} \Leftrightarrow \left(
\frac{1}{|\lambda_2|}\right)^k\ge k+1 \Leftrightarrow k \ge
\log_{1/|\lambda_2|}(k+1)=\frac{\ln (k+1)}{-\ln |\lambda_2|}
\Leftrightarrow \exp(-\ln |\lambda_2|k) \ge k + 1\label{eq:eq13}
\end{align}
and using the Taylor expansion, a sufficient condition
for~\eqref{eq:eq13} is
\begin{align*}
&\exp(-\ln |\lambda_2|k)=\sum_{n=0}^\infty{\frac{(-\ln
|\lambda_2|k)^n}{n!}}\ge 1 -\ln |\lambda_2|k + \frac{(\ln
|\lambda_2|)^2 k^2}{2}\ge k+1.
\end{align*}
Solving the last inequality leads to the conclusion that with
$k^*=\max(0,\left\lceil \frac{2\ln |\lambda_2|+2}{(\ln
|\lambda_2|)^2} \right\rceil)$, we have $|\lambda_2 |^k\le
\frac{1}{k+1},\forall k \ge k^*$. Equivalently, one has
\begin{align*}
&|\lambda_2|^{k+k^*}\le \frac{1}{k+k^*+1},\forall k \ge 0.
\end{align*}
Noting $|\lambda_2|^k |\lambda_2|^{k^*}\le \frac{1}{k+k^*+1}\le
\frac{1}{k+1},\forall k \ge 0$, one concludes $|\lambda_2|^k\le
\frac{|\lambda_2|^{-k^*}}{k+1},\forall k\ge 0$. Therefore, setting $\kappa = |\lambda_2|^{-k^*}$ concludes the proof.
\end{proof}

\bibliography{reference}

\end{document}